\documentclass[final,5p,times]{elsarticle}

\usepackage[T1]{fontenc}
\usepackage[utf8]{inputenc}
\usepackage{booktabs}
\usepackage{microtype}
\usepackage{adjustbox}
\usepackage{amsmath,amssymb,amsfonts,amsthm,mathtools,bm}
\usepackage{aliascnt}
\usepackage{enumitem}
\usepackage{float}
\usepackage{xcolor}
\usepackage[colorlinks=true,linkcolor=blue!55!black,urlcolor=blue!55!black]{hyperref}
\usepackage[nameinlink,capitalise,noabbrev]{cleveref}
\biboptions{sort&compress}
\newcommand{\fitfigure}[1]{%
  \includegraphics[width=\columnwidth,height=0.72\textheight,keepaspectratio]{#1.png}%
}

\newenvironment{fittabular}[1]
  {\begin{adjustbox}{max width=\columnwidth}\begin{tabular}{#1}}
  {\end{tabular}\end{adjustbox}}

\numberwithin{equation}{section}

\theoremstyle{plain}
\newtheorem{theorem}{Theorem}[section]
\newaliascnt{proposition}{theorem}
\newtheorem{proposition}[proposition]{Proposition}
\aliascntresetthe{proposition}
\newaliascnt{lemma}{theorem}

\aliascntresetthe{lemma}
\newaliascnt{corollary}{theorem}
\newtheorem{corollary}[corollary]{Corollary}
\aliascntresetthe{corollary}

\theoremstyle{definition}
\newaliascnt{definition}{theorem}
\newtheorem{definition}[definition]{Definition}
\aliascntresetthe{definition}
\newaliascnt{assumption}{theorem}
\newtheorem{assumption}[assumption]{Assumption}
\aliascntresetthe{assumption}
\newaliascnt{remark}{theorem}
\newtheorem{remark}[remark]{Remark}
\aliascntresetthe{remark}

\crefname{theorem}{theorem}{theorems}
\Crefname{theorem}{Theorem}{Theorems}
\crefname{proposition}{proposition}{propositions}
\Crefname{proposition}{Proposition}{Propositions}
\crefname{lemma}{lemma}{lemmas}
\Crefname{lemma}{Lemma}{Lemmas}
\crefname{corollary}{corollary}{corollaries}
\Crefname{corollary}{Corollary}{Corollaries}
\crefname{definition}{definition}{definitions}
\Crefname{definition}{Definition}{Definitions}
\crefname{assumption}{assumption}{assumptions}
\Crefname{assumption}{Assumption}{Assumptions}
\crefname{remark}{remark}{remarks}
\Crefname{remark}{Remark}{Remarks}
\crefname{appendix}{appendix}{appendices}
\Crefname{appendix}{Appendix}{Appendices}
\crefname{subappendix}{appendix}{appendices}
\Crefname{subappendix}{Appendix}{Appendices}
\crefformat{appendix}{#2#1#3}
\Crefformat{appendix}{#2#1#3}
\crefmultiformat{appendix}{#2#1#3}{ and~#2#1#3}{, #2#1#3}{, and~#2#1#3}
\Crefmultiformat{appendix}{#2#1#3}{ and~#2#1#3}{, #2#1#3}{, and~#2#1#3}
\crefrangeformat{appendix}{#3#1#4--#5#2#6}
\Crefrangeformat{appendix}{#3#1#4--#5#2#6}
\crefformat{subappendix}{#2#1#3}
\Crefformat{subappendix}{#2#1#3}
\crefmultiformat{subappendix}{#2#1#3}{ and~#2#1#3}{, #2#1#3}{, and~#2#1#3}
\Crefmultiformat{subappendix}{#2#1#3}{ and~#2#1#3}{, #2#1#3}{, and~#2#1#3}
\crefrangeformat{subappendix}{#3#1#4--#5#2#6}
\Crefrangeformat{subappendix}{#3#1#4--#5#2#6}

\newcommand{\R}{\mathbb{R}}
\newcommand{\OmegaBar}{\overline{\Omega}}
\newcommand{\dd}{\,\mathrm{d}}
\newcommand{\argmin}{\operatorname*{arg\,min}}
\newcommand{\Prob}{\mathbb{P}}
\newcommand{\Cov}{\operatorname{Cov}}

\DeclarePairedDelimiter{\norm}{\lVert}{\rVert}
\DeclarePairedDelimiter{\abs}{\lvert}{\rvert}

\newcommand{\Lop}{\mathcal{L}}
\newcommand{\Bop}{\mathcal{B}}
\newcommand{\Fop}{\mathcal{F}}

\newcommand{\Utheta}{U_\theta}
\newcommand{\ustar}{u^\star}
\newcommand{\utheta}{u_\theta}

\newcommand{\Ih}{\mathcal{I}_h}
\newcommand{\Uhadm}{\mathcal{U}_h}
\newcommand{\Th}{\mathcal{T}_h}
\newcommand{\Nh}{\mathcal{N}_h}
\newcommand{\Vhpone}{V_h^{P_1}}
\newcommand{\uhtheta}{u_{h,\theta}}

\definecolor{techgold}{HTML}{B3A369}
\definecolor{techblue}{HTML}{003057}
\definecolor{buzzgold}{HTML}{EAAA00}
\definecolor{greymatter}{HTML}{54585A}
\definecolor{uforange}{HTML}{FA4616}
\definecolor{ufblue}{HTML}{0021A5}
\definecolor{shale}{HTML}{54585A}

\newif\ifshowcomments
\showcommentsfalse

\newcommand{\na}[1]{%
    \ifshowcomments
        \textcolor{techgold}{[NA]: #1}%
    \fi
}

\journal{Computer Methods in Applied Mechanics and Engineering}

\begin{document}

\begin{frontmatter}

\title{Pointwise Error Estimates for Numerical Physics-Informed Neural Networks}

\author[gt]{Nivar Anwer\fnref{fn1}}

\author[cs,transvalor]{Marien Chenaud\fnref{fn1}}

\author[transvalor]{Jos{\'e} Alves}

\author[cs]{Fr{\'e}d{\'e}ric Magoul{\`e}s}

\affiliation[gt]{
  organization={School of Computer Science, Georgia Institute of Technology, Atlanta, GA, USA}
}

\affiliation[cs]{
  organization={MICS, CentraleSup\'elec, Paris Saclay University, 9 Rue Joliot-Curie, Gif sur Yvette, 91400, France}
}

\affiliation[transvalor]{
  organization={Transvalor SA, 955 Avenue Roumanille, Biot, 06904, France}
}

\fntext[fn1]{These authors contributed equally to this work.}

\begin{abstract}
Physics-informed neural networks are often evaluated by residual losses sampled at finitely many points, which do not by themselves certify pointwise values of a partial differential equation solution. In this work, deterministic pointwise error intervals are developed for mesh-based, piecewise-linear numerical physics-informed neural networks. The proposed error estimation is given for a compatible field, which is the finite-element reconstruction of an admissible prediction on a mesh. The certifying residual is then obtained by applying the finite-dimensional numerical system to this compatible field. For compatible square linear systems, the pointwise error relative to the discrete target has an exact adjoint Green representation, and the computed signed error recovers the finite element solution exactly. Norm-based, inexact, localized, and randomized variants provide computable intervals when the exact correction computation is impractical. The extension from the discrete target to the continuous solution is supplied by comparison estimates. For a one-dimensional coercive reaction-diffusion class, this transfer layer is made fully computable by an explicit residual-based a posteriori estimator with querywise constants. The error bound derivation is extended to nonlinear residual systems with explicit Taylor remainders. Numerical experiments assess compatibility and calibration on manufactured examples, on a large-scale public three-dimensional elasticity benchmark, and on projected neural load families on the same benchmark. 
\end{abstract}

    \end{frontmatter}

\crefname{section}{Section}{Sections}
\Crefname{section}{Section}{Sections}

\section{Introduction}
\label{sec:introduction}
\na{[note to self] add more relevant citations to intro}
Physics-informed neural networks (PINNs) are commonly trained using sampled PDE residuals, boundary terms, and data constraints \cite{Raissi2019PINNs,KarniadakisEtAl2021PIML}. However, the numerical values of the corresponding loss terms do not constitute a certificate for the predicted spatial field. In this work, we provide deterministic interval certifications for values of the exact PDE solution at specified query points. Such deterministic certificate must guarantee that the exact solution value is contained in the reported interval. A small empirical residual loss at finitely many samples does not provide such a guarantee; rigorous error control for PINN-type methods depends on stability, sampling, approximation, optimization, and loss consistency assumptions rather than on the sampled residual value alone \cite{DeRyckMishra2024Acta,BonitoDeVorePetrovaSiegel2026ConsistentPINNs}.

The inability to construct such certificate from loss values computed at fixed collocation points is structural. For an unprojected collocation PINN trained using finitely many automatic-differentiation residual samples, the field may be altered away from sampled locations without changing the sampled loss, while an unsampled point value may be changed arbitrarily. Finite collocation sampling therefore does not control unsampled point values deterministically, unless additional structure is imposed. \Cref{prop:finite_collocation_no_point_control} formalizes this limitation, and the compatibility condition introduced next provides the finite-dimensional structure needed for certification.

The certified model class is a mesh-based, piecewise-linear numerical PINN. The trainable model outputs an admissible coefficient vector in a finite-dimensional space. The resulting continuous field is the finite-element interpolation of that vector. The certifying residual that is used to derive the error bounds is the residual of the finite-dimensional numerical system applied to the same vector. This alignment is called \emph{compatibility}. Compatibility is the central structural condition, since it makes the certified object well defined: the field, the residual, and the query all act on one admissible discrete state.

The error estimate is conducted as follows. First, for a compatible linear residual system, the error admits an exact discrete adjoint Green identity relative to a discrete numerical target. The signed correction recovers that target exactly, while norm-based, localized, randomized, and inexact variants yield computable approximate intervals when full correction is impractical. The transfer from the discrete target to the exact PDE solution is handled separately by rigorous comparison estimates. Finally, the same procedure extends to nonlinear residual systems. The stationary-target variant is given in \Cref{appsec:nonlinear_stationary_variant}. The linear square-system formulation assumes an invertible reduced operator after admissible reduction, covering the standard conforming Galerkin setting. A full-column-rank overdetermined extension is given in \Cref{appsec:linear_rectangular_extension}. Rank-deficient reduced operators are outside the certified scope.

The main contributions are as follows.
\begin{enumerate}[label=\arabic*.,leftmargin=2.0em]
    \item Compatibility is identified as the condition that makes deterministic pointwise certification well defined for a predicted numerical PINN field.
    \item The square-system linear certificate is derived from the discrete target, through the exact identity and transfer step, to the final certified interval; a full-column-rank overdetermined extension is given in \Cref{appsec:linear_rectangular_extension}.
    \item Discrete certification is separated from discrete-to-continuous transfer, so that the final error bound decomposes into distinct error contributions, namely: discrete correction or residual, query sensitivity, transfer, and representation mismatch.
    \item A fully computable a posteriori transfer theorem is proved for a concrete coercive class, namely one-dimensional reaction--diffusion with conforming \(P_1\) Galerkin discretization, explicit residual estimators, and querywise constants.
    \item The same procedure is extended to localized, randomized, and nonlinear certificates, with certified inexact-solve tolerances and a stationary-target extension provided in the appendices.
    \item The theoretical claims are evaluated on manufactured pointwise benchmarks in two and three dimensions, on a public three-dimensional elasticity benchmark, and on projected neural load families, using scalar compatible queries, query-dependent adjoint Green sensitivities, corrected centers, raw-output intervals after projection, and comparison identities based on the released benchmark field.
\end{enumerate}

For theoretical reasons stated in \Cref{prop:finite_collocation_no_point_control}, the proposed method does not provide a certificate for unprojected collocation PINNs trained only on finitely many sampled residuals. The certificate applies after a compatible finite-dimensional numerical system is specified and the predicted field is represented within that system. Second, in a single linear instance, exact correction recovers the target of the chosen numerical discretization. The numerical PINN is therefore not presented as a single-instance substitute for a direct classical resolution, but as a source of admissible states that can be certified, approximately corrected, or reused across parametric, many-instance, and nonlinear settings.

The paper is organized as follows. \Cref{sec:related_work} positions the work relative to the closest PINN, finite-element PINN, and goal-oriented finite-element literatures. \Cref{sec:framework} defines the certified object, the notation, and the scope assumptions. \Cref{sec:linear_main} gives the square-system linear certificate. \Cref{sec:linear_refinements} gives downstream computational variants and finite-element specializations, including a fully computable a posteriori transfer theorem for one-dimensional coercive reaction--diffusion. \Cref{appsec:linear_rectangular_extension} gives the full-column-rank overdetermined linear extension. \Cref{sec:nonlinear} gives the exact-root nonlinear certificate, while the stationary-target extension is given in \Cref{appsec:nonlinear_stationary_variant}. \Cref{sec:computational_procedure} provides an implementation procedure. \Cref{sec:experiments} organizes the experiments by theorem-level claim. Additional experiments and reporting details are given in \Cref{app:additional_experiments}. Auxiliary analytical material is given in \Cref{appsec:main_relocated_material}, and the proofs are given in \Cref{app:proofs}.

\section{Related Work}
\label{sec:related_work}

The problem that is addressed here is the computation of certified error bounds for PINN predictions. Recent works develop residual-based analysis of physics-informed models, and rigorous control of neural PDE solvers. Generalization, stability, and residual-to-solution estimates are developed in \cite{MishraMolinaro2023Generalization,MishraMolinaro2022Inverse,DeRyckMishra2024Acta,ZeinhoferMasriMardal2025UnifiedFramework}, with Navier--Stokes extensions in \cite{DeRyckJagtapMishra2024NavierStokes}. For elliptic problems, \cite{BonitoDeVorePetrovaSiegel2026ConsistentPINNs} show that provable error control depends critically on loss consistency. Rigorous a posteriori certification has also begun to appear, including \cite{HillebrechtUnger2025RigorousAPosterioriPINNs,XuYaguchi2025APosterioriNavierStokesPINNs}. These studies are closest in scope because they connect residual information to rigorous solution control. Other works, such as \cite{doumeche2023convergence}, leverage statistical learning theory to derive probabilistic error bounds, rather than deterministic guarantees. Finally, the authors of \cite{zhu2022numerical} derive an error decomposition for neural differential equation solvers, separating the contribution of the time discretization from that of the approximation error. However, their analysis focuses on neural ordinary differential equation solvers embedded within a numerical time-stepping framework, rather than on direct predictions produced by PINNs.


A second related line introduces variational, finite-element, or mesh-based structure into PINNs. hp-VPINNs, cPINNs, and related weak formulations build neural trial spaces into weighted-residual or domain-decomposition schemes \cite{KharazmiZhangKarniadakis2021hpVPINNs,JagtapKharazmiKarniadakis2020cPINNs}. Quadrature and test-space effects are analyzed in \cite{BerroneCanutoPintore2022VPINNQuadratures}, and reliable residual-type a posteriori estimators in the energy norm are given in \cite{BerroneCanutoPintore2022VPINNAPosteriori}. Finite element informed neural networks (FEINNs) and their variants associate PINNs with increasingly explicit finite-element structure \cite{BadiaLiMartin2024FEINN,BadiaLiMartin2025CompatibleFEINN,BadiaLiMartin2025AdaptiveFEINN, ChenaudMagoulesAlves2023PIGCN,ChenaudMagoulesAlves2024PIGMN}. The Deep Ritz literature provides a related variational approach with its own approximation and certification results \cite{EYu2018DeepRitz,MinakowskiRichter2023DeepRitz,GrekasMakridakis2025DeepRitzFEM}. These works endow neural approximations with numerical structure. The present work is different in objective: it does not propose a new mesh-based neural solver, but a deterministic pointwise certification layer for a released admissible numerical field whose residual and query map are compatible with the same finite-dimensional system.

A separate issue is whether a mesh-based numerical PINN has a role beyond a direct finite-element solve. For a single well-posed coercive linear instance requiring high accuracy, no advantage over a direct finite-element solve is claimed; under exact correction, the certificate recovers the target of the chosen numerical discretization. The relevant regime is one in which a trainable model produces admissible states that are reused across loads, parameters, geometries, or queries, corrected only approximately, or used as local base points for nonlinear verification. In that regime, the finite-element structure is essential. It supplies the admissible coordinates, query map, and compatible residual needed to define which numerical object is certified. This object is not defined by an unprojected collocation PINN alone.

The transfer layer and the pointwise viewpoint draw on classical finite-element analysis rather than on PINN training theory. Goal-oriented and dual-weighted-residual methods for pointwise or scalar quantities of interest were developed in \cite{PrudhommeOden1999GoalOrientedPointwise,OdenPrudhomme2001GoalOriented,BeckerRannacher2001OptimalControlAPosteriori}, while classical pointwise finite-element estimates date to \cite{RannacherScott1982OptimalPointwise}. These techniques serve a different purpose in the present setting. The approximate state is supplied by a numerical PINN, the discrete certificate is attached to the compatible residual of the same discrete system that defines the certified field. The adaptation to the continuous problem is completed only after a separate discrete-to-continuous transfer step.


The main contribution of our work, relative to the aforementioned studies, can be articulated along three complementary aspects. First, the certified quantity is defined through the compatibility between the field, the residual, and the query evaluated on a common admissible vector. Second, the passage from the discrete target to the exact PDE solution is formulated as an independent certified layer. Finally, our primary objective is the deterministic certification of scalar compatible queries, including pointwise evaluations, rather than the sole control of residuals or energy norms.

\section{Framework and Compatibility}
\label{sec:framework}

This section defines the scope used in this work to derive error bounds. First, the discrete, target solution is projected onto the exact PDE solution space. The error bounds are derived from this projection. An alternative construction based on direct continuous residual-to-point stability is given in \Cref{appsec:continuous_residual_route}.

\subsection{Definition and objective}
\label{subsec:problem_statement}

Let \(\Omega\subset\R^d\) be a bounded domain with closure \(\OmegaBar\). The goal is to certify the values of the exact PDE solution \(\ustar\) at prescribed query points
\begin{equation}
    \mathcal{Q}=\{x_1,\dots,x_K\}\subset\OmegaBar.
\end{equation}
For each query point, define the point-evaluation functional
\begin{equation}
    \lambda_i(u):=u(x_i),
    \qquad i=1,\dots,K.
\end{equation}
The desired deterministic certificate is an interval \(I_i\subset\R\) satisfying
\begin{equation}
    \ustar(x_i)\in I_i,
    \qquad i=1,\dots,K.
    \label{eq:deterministic_target}
\end{equation}
When randomized Green-sensitivity estimation is employed, the corresponding simultaneous certificate is
\begin{equation}
    \Prob_{\Xi}
    \left\{
    \ustar(x_i)\in I_i\ \text{for all } i=1,\dots,K
    \right\}
    \ge 1-\delta,
    \label{eq:random_target}
\end{equation}
where the probability is taken only over the auxiliary randomization \(\Xi\). This probability statement is not a probabilistic model for \(\ustar\).

The certification problem therefore has two stages. A compatible discrete target is certified first; the resulting statement is then transferred to the continuous PDE solution through independently certified comparison bounds.

\begin{table}[t]
    \centering
    \caption{Core objects used throughout the analysis.}
    \begin{fittabular}{@{}ll@{}}
        \toprule
        Symbol & Meaning \\
        \midrule
        \(\ustar\) & exact PDE solution \\
        \(\Utheta=U_{0,h}+Z_hy_\theta\) & admissible coefficient vector produced by the numerical PINN \\
        \(\uhtheta=\Ih\Utheta\) & certified reconstruction attached to \(\Utheta\) \\
        \(U_h^\star\) & compatible discrete target of the chosen numerical system \\
        \(U_h^\pi\) & admissible comparison state used for transfer to \(\ustar\) \\
        \(\utheta\) & unprojected neural field before extraction, when present \\
        \(\zeta_{i,h}^{\theta}\) & querywise mismatch between \(\utheta\) and \(\uhtheta\) \\
        \(r_h=F_h(\Utheta)\) or \(A_h\Utheta-b_h\) & compatible residual of the same admissible vector \\
        \(\ell_{i,h}\) & compatible coefficient-space query vector \\
        \bottomrule
    \end{fittabular}
    \label{tab:notation_map}
\end{table}

\subsection{Certified numerical object and compatibility}
\label{subsec:p1_framework}
\label{subsec:compatible_system}

The certificate is attached to one admissible coefficient vector and to one discrete query/residual system built on the same coordinates. The algebra is stated first for a general scalar compatible query. Nodal and off-grid \(P_1\) point values are then immediate special cases.

Let \(V_h\) be a finite-dimensional trial space, let \(\Uhadm\subseteq\R^{N_h}\) be the admissible coefficient set, let
\[
    \Ih:\R^{N_h}\to V_h
\]
be a linear reconstruction map, and let
\[
    F_h:\Uhadm\to\R^{M_h}
\]
be the discrete residual map.

For each prescribed query, assume there exists a vector \(\ell_{i,h}\in\R^{N_h}\) such that
\begin{equation}
    \Lambda_{i,h}(U):=\ell_{i,h}^{\top}U
    \qquad\text{for every }U\in\Uhadm.
    \label{eq:general_discrete_query}
\end{equation}
In the main pointwise setting,
\begin{equation}
    \Lambda_{i,h}(U)=(\Ih U)(x_i),
    \qquad i=1,\dots,K,
    \label{eq:point_query_special_case}
\end{equation}
Other scalar compatible queries can be assembled in the same basis.

\begin{definition}[Compatible finite-dimensional certification system]
A compatible certification system consists of \((\Uhadm,\Ih,F_h,\{\ell_{i,h}\}_{i=1}^K)\) such that
\begin{enumerate}[label=(\roman*),leftmargin=2.2em]
    \item \(\Ih\) is linear;
    \item \(\Uhadm\subseteq \R^{N_h}\) is the admissible coefficient set;
    \item the reported query value is \(\Lambda_{i,h}(U)=\ell_{i,h}^{\top}U\);
    \item the certifying residual is computed from the same admissible vector, namely
    \[
        r_h(U):=F_h(U).
    \]
\end{enumerate}
Compatibility means that the reported value \(\Lambda_{i,h}(\Utheta)\), the certified reconstruction \(\Ih\Utheta\), and the certifying residual \(r_h(\Utheta)\) are all attached to the same admissible vector \(\Utheta\in\Uhadm\).
\end{definition}

This definition specifies the certified scope: admissible trainable models must output, or be explicitly mapped to, a coefficient vector in a specified finite-dimensional space. The certificate is attached to that vector and to the numerical system built on it, not to the training loss alone.

The linear results in \Cref{sec:linear_main} specialize to \(F_h(U)=A_hU-b_h\). The nonlinear results in \Cref{sec:nonlinear} use the same compatibility requirement after reduction to admissible coordinates.

A mesh-based numerical PINN is any trainable map that outputs such an admissible vector \(\Utheta\in\Uhadm\). The certified field is
\begin{equation}
    \uhtheta:=\Ih \Utheta.
    \label{eq:certified_field_general}
\end{equation}
If an unprojected neural field \(\utheta\) is trained first, then an explicit extraction map
\[
    \Pi_h:\utheta\mapsto \Utheta\in\Uhadm
\]
must be specified. Common choices include nodal interpolation, projection, and constrained projection. The certificate is attached to \(\uhtheta=\Ih \Utheta\), not to the unprojected neural field.

For the mesh-based \(P_1\) case, we use standard conforming simplicial finite-element notation \cite{BrennerScott2008FEM}. Let \(\Th\) be a conforming, shape-regular simplicial mesh of \(\Omega\) with nodal set
\[
    \Nh=\{a_1,\dots,a_{N_h}\}.
\]
If a fitted mesh is not used and the computation is performed on \(\Omega_h\neq\Omega\), the resulting geometric consistency terms are absorbed later into the transfer bounds.

Let
\[
    \Vhpone
    :=
    \left\{
    v_h\in C^0(\OmegaBar):
    v_h|_T\in P_1(T)\ \text{for every }T\in\Th
    \right\},
\]
with nodal basis \(\{\phi_j\}_{j=1}^{N_h}\). The reconstruction is
\begin{equation}
    \Ih U=\sum_{j=1}^{N_h}U_j\phi_j.
    \label{eq:p1_reconstruction_map}
\end{equation}
Hence the point-query vector at \(x\in\OmegaBar\) is
\begin{equation}
    \ell_h(x):=\bigl(\phi_1(x),\dots,\phi_{N_h}(x)\bigr)^\top,
    \qquad
    (\Ih U)(x)=\ell_h(x)^\top U.
    \label{eq:p1_query_vector}
\end{equation}
Consequently:
\begin{enumerate}[label=(\alph*),leftmargin=2.2em]
    \item if \(x=a_k\) is a mesh node, then \(\ell_h(x)=e_k\) and the query is a nodal value;
    \item if \(x\) lies in an element \(T=\operatorname{conv}\{a_{j_0},\dots,a_{j_d}\}\), then the nonzero entries of \(\ell_h(x)\) are exactly the barycentric coordinates of \(x\) in \(T\), so the query is an off-grid point value;
    \item if a different scalar linear functional is needed later, its compatible query vector is assembled once in the same coefficient basis and used by the same algebra.
\end{enumerate}
For the prescribed point set \(\mathcal Q=\{x_1,\dots,x_K\}\), define
\begin{equation}
    \ell_{i,h}:=\ell_h(x_i),
    \qquad i=1,\dots,K.
    \label{eq:query_vectors}
\end{equation}

The compatible residual of the certified vector is
\begin{equation}
    r_h:=F_h(\Utheta).
    \label{eq:compatible_discrete_residual}
\end{equation}
In the linear setting this becomes \(r_h=A_h\Utheta-b_h\); in the nonlinear setting the reduced residual is introduced in \Cref{sec:nonlinear}. A sampled automatic-differentiation residual may replace \eqref{eq:compatible_discrete_residual} only if it is proved to equal the residual of this same compatible finite-dimensional system applied to an admissible vector.

If the final reported center is an unprojected neural output \(\utheta(x_i)\) rather than the certified reconstruction \(\uhtheta(x_i)=\ell_{i,h}^{\top}\Utheta\), we define the representation mismatch
\begin{equation}
    \zeta_{i,h}^{\theta}
    :=
    \abs{\utheta(x_i)-\ell_{i,h}^{\top}\Utheta}.
    \label{eq:representation_mismatch}
\end{equation}

The compatibility chain is summarized in \Cref{fig:certificate-path}.

\begin{figure}[t]
    \centering
    \includegraphics[
        width=\columnwidth,
        height=0.36\textheight,
        keepaspectratio
    ]{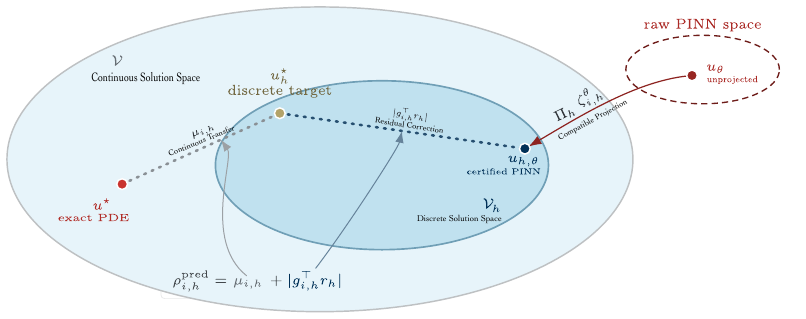}
    \caption{Compatibility path: projection, discrete correction, and transfer to the PDE target.}
    \label{fig:certificate-path}
\end{figure}

For linear systems, the primary formulation assumes square operators after admissible reduction; the full-column-rank overdetermined extension is given in \Cref{appsec:linear_rectangular_extension}.

For vector-valued problems, such as elasticity, the coefficient-level algebra is unchanged: replace \(V_h\) by the corresponding vector-valued finite-element space, stack the coefficients in \(U\), and choose \(\ell_{i,h}\) to extract the desired scalar component or scalar linear functional.

\subsection{Finite collocation residuals evaluation do not control unsampled point values}
\label{subsec:collocation_obstruction}

The following proposition justifies the impossibility of constructing deterministic error bounds from finitely many collocation points, and therefore justifies the framework proposed here.

\begin{proposition}[Finite collocation samples do not certify an unsampled point value]
\label{prop:finite_collocation_no_point_control}
Let \(S=\{z_1,\dots,z_m\}\subset\Omega\cup\partial\Omega\) be the finite set of collocation and boundary sample points used by an unprojected automatic-differentiation PINN loss. Assume the sampled loss depends only on finitely many values of \(u\) and its derivatives up to finite order at points of \(S\).

Fix an interior point \(x_0\in\Omega\setminus S\). Then there exists \(\varphi\in C_c^\infty(\Omega)\) such that
\begin{equation}
    \begin{multlined}
    \varphi(x_0)=1,
    \qquad
    D^\alpha\varphi(z_j)=0
    \quad
    \text{for every sample point }z_j\\
    \text{and every derivative }D^\alpha
    \text{ used by the loss}.
    \end{multlined}
\end{equation}
Consequently, for every candidate field \(u\) and every \(\beta\in\R\), the sampled loss takes the same value at \(u\) and \(u+\beta\varphi\), while
\[
    (u+\beta\varphi)(x_0)=u(x_0)+\beta.
\]
Hence finitely many collocation residual samples alone cannot yield a deterministic certificate for the unsampled point value at \(x_0\).
\end{proposition}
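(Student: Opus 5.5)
The construction is a bump function centered at \(x_0\) whose support avoids every sample point. Since \(S\) is finite and \(x_0\notin S\), and \(x_0\) is interior, the radius
\[
    \rho:=\tfrac12\min\Bigl\{\operatorname{dist}(x_0,\partial\Omega),\ \min_{1\le j\le m}\abs{x_0-z_j}\Bigr\}
\]
is strictly positive. Take a standard mollifier-type bump \(\psi\in C_c^\infty(\R^d)\) with \(\psi(0)=1\) and \(\operatorname{supp}\psi\subset B(0,1)\), for example \(\psi(y)=e\,\exp\bigl(-1/(1-\abs{y}^2)\bigr)\) for \(\abs{y}<1\) and \(\psi(y)=0\) otherwise. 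Then set \(\varphi(x):=\psi\bigl((x-x_0)/\rho\bigr)\).

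We then check the required properties of \(\varphi\).

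\begin{enumerate}[label=(\roman*),leftmargin=2.2em]
    \item \(\operatorname{supp}\varphi\subset\overline{B(x_0,\rho)}\). This closed ball lies inside \(\Omega\), because \(\rho<\operatorname{dist}(x_0,\partial\Omega)\). Hence \(\varphi\in C_c^\infty(\Omega)\).
    \item \(\varphi(x_0)=\psi(0)=1\).
    \item Each sample point satisfies \(\abs{z_j-x_0}\ge 2\rho>\rho\). So every \(z_j\) has an open neighbourhood disjoint from \(\operatorname{supp}\varphi\). On that neighbourhood \(\varphi\equiv0\), so \(D^\alpha\varphi(z_j)=0\) for every multi-index \(\alpha\), and in particular for every derivative used by the loss.
\end{enumerate}

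Boundary sample points \(z_j\in\partial\Omega\) need no separate treatment. The support of \(\varphi\) is a compact subset of \(\Omega\), so \(\varphi\) and all its derivatives vanish near \(\partial\Omega\), and hence at every boundary sample. This holds whether boundary derivatives are taken one-sidedly or as traces.

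For the consequence, use the linearity of differentiation: \(D^\alpha(u+\beta\varphi)(z_j)=D^\alpha u(z_j)+\beta D^\alpha\varphi(z_j)=D^\alpha u(z_j)\). So the finite list of sampled quantities on which the loss depends coincides for \(u\) and \(u+\beta\varphi\). By hypothesis the loss is a function of that list only, so the loss values agree, even when the residual operator is nonlinear. Meanwhile \((u+\beta\varphi)(x_0)=u(x_0)+\beta\).

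To conclude, suppose some deterministic rule mapped the sampled loss data to a bounded interval \(I\) certified to contain the exact solution's value at \(x_0\). Apply it to a candidate \(u\) and to \(u+\beta\varphi\). Both produce the same data, hence the same interval \(I\). But their values at \(x_0\) range over all of \(u(x_0)+\R\), so any bounded \(I\) fails to contain one of them once \(\abs{\beta}\) is large.

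Here is the intended interpretation that makes this a genuine obstruction. The exact solution \(\ustar\) is consistent with the sampled data, and so is \(\ustar+\beta\varphi\) as a field with zero sampled residual. Equivalently, the sampled residual data cannot distinguish fields whose values at \(x_0\) differ arbitrarily.

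The only mildly delicate step is this last interpretive one. It requires making precise that the certificate is a function of the sampled data alone. The construction itself is routine.
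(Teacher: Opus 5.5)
Your proposal is correct and follows the paper's proof: the paper also picks a ball \(B\Subset\Omega\setminus S\) centred at \(x_0\) and a bump \(\varphi\in C_c^\infty(B)\) with \(\varphi(x_0)=1\). It then observes that \(\varphi\) vanishes on a neighbourhood of every \(z_j\), so all its derivatives vanish there, and concludes that the sampled loss is unchanged while the value at \(x_0\) shifts by \(\beta\); your explicit radius and mollifier simply make that choice concrete.

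One point in your concluding contradiction needs tightening. A single interval only has to contain the fixed number \(\ustar(x_0)\), not \(u(x_0)\) or \(u(x_0)+\beta\). So state the contradiction either for intervals centred at the candidate's value with a radius depending only on the sampled data, or (as your next paragraph essentially does) via \(\ustar\) and \(\ustar+\beta\varphi\) both having zero sampled residual.
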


This proposition identifies a potential failure mode at the level of the underlying smooth problem, even if the specific perturbation  \(u+\beta\varphi\) may not be representable by a given PINN architecture.

\begin{remark}[Scope of mesh-based restrictions]
\label{rem:mesh_space_scope}
\Cref{prop:finite_collocation_no_point_control} explains why deterministic certification requires a compatible finite-dimensional numerical representation. Restricting the output to a mesh-based space, for example a \(P_1\) finite-element space, removes uncontrolled between-sample variation because every query value becomes the fixed linear functional \(\ell_{i,h}^{\top}U\). That restriction is necessary in this framework when starting from unprojected collocation losses, but it is not sufficient by itself. Valid certification still requires the compatible residual \(A_hU-b_h\), correct boundary and constraint handling, discrete stability on the admissible space, and a certified transfer from the discrete target to the continuous PDE solution.
\end{remark}

\subsection{Continuous PDE target, transfer interface, and scope}
\label{subsec:continuous_target}

The exact certification target is the continuous PDE solution \(\ustar\). The main theorems first certify a compatible discrete target and only then transfer that statement to \(\ustar\). Only the transfer interface required for the subsequent interval results is stated here; the alternative direct route based on continuous residual-to-point stability is given in \Cref{appsec:continuous_residual_route}.

Let \(V\) denote the admissible continuous solution class. For each reported query, the corresponding linear functional must be well defined on the class used in the transfer argument. For point queries in \(d\ge 2\), this is not provided by \(H_0^1(\Omega)\) alone and therefore requires additional regularity or a transfer argument stated in a stronger class. Smoothed scalar linear functionals, such as fixed-radius averages, do not require that pointwise boundedness assumption. Let
\begin{equation}
    \Lop:V\to Y_\Omega,
    \qquad
    \Bop:V\to Y_{\partial\Omega},
\end{equation}
and consider the boundary-value problem
\begin{equation}
    \Lop(\ustar)=f\quad\text{in }\Omega,
    \qquad
    \Bop(\ustar)=g\quad\text{on }\partial\Omega.
    \label{eq:continuous_pde}
\end{equation}

\begin{assumption}[Well-posed continuous target for transfer]
\label{ass:continuous_target_well_posed}
The problem \eqref{eq:continuous_pde} has a unique solution \(\ustar\in V\) in the admissible class. If only local uniqueness is available, every later transfer statement is understood on the certified neighborhood where that solution is unique.
\end{assumption}

The linear and nonlinear interval theorems below certify \(\ustar(x_i)\) by combining a discrete certificate for the compatible numerical target with a separate transfer term. That transfer term is supplied by an admissible comparison state together with rigorous residual and evaluation bounds; see \Cref{subsec:linear_transfer} for the linear construction and \Cref{prop:nonlinear_root_discrete_to_continuous} for the nonlinear exact-root analogue.

\subsection{Main scope assumptions}
\label{subsec:scope_assumptions}

The following assumptions determine the certified scope of the results.

\begin{center}
\begin{minipage}{0.94\columnwidth}
\begin{enumerate}[label=(\roman*),leftmargin=2.0em]
    \item The continuous target is well posed, and each reported query functional is meaningful on the class used in the transfer argument.
    \item The predicted field is tied to a compatible finite-dimensional system \((\Uhadm,\Ih,F_h,\{\ell_{i,h}\})\).
    \item Boundary conditions, algebraic constraints, quadrature choices, and geometry approximations are handled consistently in the admissible space and residual.
    \item In the linear square-system setting, the reduced operator is square and invertible after admissible reduction; \Cref{appsec:linear_rectangular_extension} treats the full-column-rank overdetermined extension. Rank-deficient reduced operators are outside the scope of this work.
    \item The transfer from the discrete target to the PDE solution is supplied by rigorous comparison bounds.
    \item For nonlinear systems, the relevant neighborhood, invertibility or full-rank conditions, and Taylor remainder bounds are verified.
\end{enumerate}
\end{minipage}
\end{center}

These assumptions are part of the certified scope. They define the class of released numerical PINN fields for which the certificates apply.

\section{Linear Deterministic Certification}
\label{sec:linear_main}

This section presents the square reduced linear certificate: the compatible discrete target is defined, the exact residual-to-query identity is derived, the target is transferred to the continuous PDE solution, and the deterministic interval theorem is stated. Localized, randomized, and inexact linear variants are given in \Cref{sec:linear_refinements}; the nonlinear generalization is given in \Cref{sec:nonlinear}. The proofs of the propositions and theorems of this section are given in \Cref{appsec:linear_main_proofs}.

\subsection{Admissible coefficient space and discrete target}
\label{subsec:linear_discrete_target}

Let the discrete admissible set be the affine space
\begin{equation}
    \Uhadm:=U_{0,h}+Z_h\R^{n_h}\subseteq\R^{N_h},
    \label{eq:admissible_affine_space}
\end{equation}
where \(Z_h\in\R^{N_h\times n_h}\) has full column rank. This representation covers strongly imposed boundary conditions, gauge constraints, quotient-space parametrizations, and nullspace removal. In the unconstrained case, \(U_{0,h}=0\), \(Z_h=I_{N_h}\), and \(n_h=N_h\).

Assume that the trained numerical PINN vector is admissible:
\begin{equation}
    \Utheta=U_{0,h}+Z_hy_\theta.
\end{equation}
If the unconstrained PINN vector is not admissible, it must first be replaced by a specified admissible projection, and the resulting computable mismatch must be included in the final interval.

For the linear discrete residual
\begin{equation}
    F_h(U)=A_hU-b_h,
    \qquad
    A_h\in\R^{M_h\times N_h},
    \qquad
    b_h\in\R^{M_h},
\end{equation}
define
\begin{equation}
    \widehat A_h:=A_hZ_h,
    \qquad
    \widehat b_h:=b_h-A_hU_{0,h}.
\end{equation}
Then
\begin{equation}
    F_h(U_{0,h}+Z_hy)=\widehat A_hy-\widehat b_h.
\end{equation}
Let \(W_h\in\R^{M_h\times M_h}\) be symmetric positive definite and define the residual inner product
\begin{equation}
    \langle r,s\rangle_{W_h}:=r^\top W_h s,
    \qquad
    \norm{r}_{W_h}:=(r^\top W_h r)^{1/2}.
\end{equation}
Set
\begin{equation}
    \widehat H_h:=\widehat A_h^\top W_h\widehat A_h.
    \label{eq:reduced_normal_matrix_spd}
\end{equation}

\begin{assumption}[Square reduced linear system]
\label{ass:discrete_square_main}
The reduced operator \(\widehat A_h\in\R^{n_h\times n_h}\) is square and invertible. The full-column-rank overdetermined case is treated in \Cref{appsec:linear_rectangular_extension}. Rank-deficient reduced operators are outside the certified scope.
\end{assumption}

Different choices of \(W_h\) specify different residual geometries (Euclidean, quadrature-weighted, or energy-compatible) but in the square reduced setting they do not alter the exact identities below.

Define the discrete numerical target by
\begin{equation}
    y_h^\star := \widehat A_h^{-1}\widehat b_h,
    \qquad
    U_h^\star := U_{0,h}+Z_hy_h^\star.
    \label{eq:admissible_discrete_target}
\end{equation}
The compatible residual of the certified vector is
\begin{equation}
    r_h:=A_h\Utheta-b_h
    =
    \widehat A_hy_\theta-\widehat b_h.
    \label{eq:admissible_pinn_residual}
\end{equation}

\subsection{Exact query identities and sensitivities}
\label{subsec:linear_exact_identities}

For a discrete quantity-of-interest vector \(\ell_h\in\R^{N_h}\), we consider its restriction to the fixed admissible space. Define
\begin{equation}
    \widehat\ell_h:=Z_h^\top\ell_h\in\R^{n_h}.
    \label{eq:reduced_qoi_vector}
\end{equation}
Let \(q_{\ell,h}\in\R^{n_h}\) solve
\begin{equation}
    \widehat H_hq_{\ell,h}=\widehat\ell_h,
    \label{eq:reduced_adjoint_equation}
\end{equation}
and define the residual-side Green vector
\begin{equation}
    g_{\ell,h}:=W_h\widehat A_hq_{\ell,h}\in\R^{M_h}.
    \label{eq:reduced_residual_green_vector}
\end{equation}
Also set
\begin{equation}
    \sigma_{\ell,h}
    :=
    \norm{g_{\ell,h}}_{W_h^{-1}}.
    \label{eq:general_sigma_definition}
\end{equation}
In the square reduced setting, \(g_{\ell,h}=\widehat A_h^{-\top}\widehat\ell_h\), independently of \(W_h\); the appendix gives the overdetermined full-column-rank interpretation.

\begin{proposition}[Exact discrete residual-to-error identity]
\label{prop:admissible_exact_identity}
Assume \Cref{ass:discrete_square_main}. Then
\begin{equation}
    \Utheta-U_h^\star
    =
    Z_h\widehat A_h^{-1}r_h.
    \label{eq:admissible_exact_discrete_identity}
\end{equation}
\end{proposition}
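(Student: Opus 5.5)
The identity is linear algebra in the reduced coordinates. The plan is to compute the residual in those coordinates, invert the square reduced operator, and then lift the result back through \(Z_h\).

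First, write both vectors in admissible coordinates. By assumption \(\Utheta=U_{0,h}+Z_hy_\theta\), and by \eqref{eq:admissible_discrete_target} \(U_h^\star=U_{0,h}+Z_hy_h^\star\). The affine offset \(U_{0,h}\) cancels, giving
\[
    \Utheta-U_h^\star=Z_h\bigl(y_\theta-y_h^\star\bigr).
\]
It therefore suffices to show that \(y_\theta-y_h^\star=\widehat A_h^{-1}r_h\).

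Second, use the reduced form of the residual. The identity \(F_h(U_{0,h}+Z_hy)=\widehat A_hy-\widehat b_h\) follows directly from the definitions \(\widehat A_h=A_hZ_h\) and \(\widehat b_h=b_h-A_hU_{0,h}\). With it, \eqref{eq:admissible_pinn_residual} gives \(r_h=\widehat A_hy_\theta-\widehat b_h\). By definition of the target, \(\widehat b_h=\widehat A_hy_h^\star\). Substituting,
\[
    r_h=\widehat A_h\bigl(y_\theta-y_h^\star\bigr).
\]

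Third, invoke \Cref{ass:discrete_square_main}. Since \(\widehat A_h\) is square and invertible, this relation can be solved uniquely as \(y_\theta-y_h^\star=\widehat A_h^{-1}r_h\). Multiplying by \(Z_h\) and combining with the first step yields \eqref{eq:admissible_exact_discrete_identity}.

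There is no real obstacle in this argument. The only points to check are dimensional bookkeeping and the role of the weight matrix. Squareness forces \(M_h=n_h\), so \(r_h\in\R^{n_h}\) lies in the domain of \(\widehat A_h^{-1}\). The weight \(W_h\) plays no role in the identity, which is consistent with the remark that the choice of \(W_h\) does not alter the exact identities in the square setting.
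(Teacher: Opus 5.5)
Your proof is correct and follows the paper's argument: write $r_h=\widehat A_h(y_\theta-y_h^\star)$ using $\widehat b_h=\widehat A_h y_h^\star$, invert $\widehat A_h$, and multiply by $Z_h$. Your explicit cancellation of $U_{0,h}$ and the dimension check $M_h=n_h$ make explicit the lifting step that the paper leaves implicit.
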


\begin{proposition}[Adjoint Green identity]
\label{prop:admissible_adjoint_green_identity}
Assume \Cref{ass:discrete_square_main}. For every \(\ell_h\in\R^{N_h}\),
\begin{equation}
    \ell_h^\top(\Utheta-U_h^\star)=g_{\ell,h}^\top r_h.
    \label{eq:admissible_pointwise_identity}
\end{equation}
Moreover,
\begin{equation}
    \sigma_{\ell,h}
    =
    \left(
    \widehat\ell_h^\top\widehat H_h^{-1}\widehat\ell_h
    \right)^{1/2}.
    \label{eq:admissible_sigma_identity}
\end{equation}
Consequently,
\begin{equation}
    \abs{\ell_h^\top(\Utheta-U_h^\star)}
    \le
    \sigma_{\ell,h}\norm{r_h}_{W_h}.
    \label{eq:admissible_pointwise_bound}
\end{equation}
\end{proposition}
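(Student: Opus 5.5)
The plan is to reduce everything to the square reduced system via \Cref{prop:admissible_exact_identity} and then simplify the Green vector. First, show that in the square setting \(g_{\ell,h}=\widehat A_h^{-\top}\widehat\ell_h\). Since \(\widehat A_h\) is square and invertible and \(W_h\) is SPD, the matrix \(\widehat H_h=\widehat A_h^\top W_h\widehat A_h\) is invertible. Hence \(q_{\ell,h}=\widehat A_h^{-1}W_h^{-1}\widehat A_h^{-\top}\widehat\ell_h\), and therefore
\[
g_{\ell,h}=W_h\widehat A_h q_{\ell,h}=W_hW_h^{-1}\widehat A_h^{-\top}\widehat\ell_h=\widehat A_h^{-\top}\widehat\ell_h .
\]

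For the identity \eqref{eq:admissible_pointwise_identity}, multiply \eqref{eq:admissible_exact_discrete_identity} on the left by \(\ell_h^\top\). This gives
\[
\ell_h^\top(\Utheta-U_h^\star)=\ell_h^\top Z_h\widehat A_h^{-1}r_h=\widehat\ell_h^{\,\top}\widehat A_h^{-1}r_h=(\widehat A_h^{-\top}\widehat\ell_h)^\top r_h=g_{\ell,h}^\top r_h .
\]
This uses only \(\widehat\ell_h=Z_h^\top\ell_h\).

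For the variance formula \eqref{eq:admissible_sigma_identity}, compute
\[
\sigma_{\ell,h}^2=g_{\ell,h}^\top W_h^{-1}g_{\ell,h}=\widehat\ell_h^\top\widehat A_h^{-1}W_h^{-1}\widehat A_h^{-\top}\widehat\ell_h .
\]
Then recognize \(\widehat A_h^{-1}W_h^{-1}\widehat A_h^{-\top}=(\widehat A_h^\top W_h\widehat A_h)^{-1}=\widehat H_h^{-1}\). An equivalent route, usable if one prefers not to invert \(\widehat A_h\) explicitly, is to write \(\sigma_{\ell,h}^2=q_{\ell,h}^\top\widehat A_h^\top W_h W_h^{-1}W_h\widehat A_h q_{\ell,h}=q_{\ell,h}^\top\widehat H_h q_{\ell,h}=\widehat\ell_h^\top\widehat H_h^{-1}\widehat\ell_h\). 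This version does not need squareness and matches the overdetermined appendix.

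Finally, the bound \eqref{eq:admissible_pointwise_bound} follows from Cauchy--Schwarz in the \(W_h\)-dual pairing:
\[
\abs{g^\top r}=\abs{(W_h^{-1/2}g)^\top(W_h^{1/2}r)}\le\norm{g}_{W_h^{-1}}\norm{r}_{W_h}.
\]
Here \(W_h^{1/2}\) denotes the SPD square root. There is no real obstacle. The only point needing care is dimension bookkeeping: the residual lives in \(\R^{M_h}\) and the reduced operator is \(n_h\times n_h\), so squareness forces \(M_h=n_h\), and \(W_h\) is then \(n_h\times n_h\). With that noted, all the inverses above are legitimate.
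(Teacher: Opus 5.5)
Your proposal is correct and follows essentially the paper's proof: multiply the identity of \Cref{prop:admissible_exact_identity} by \(\ell_h^\top\) and use \(g_{\ell,h}=\widehat A_h^{-\top}\widehat\ell_h\). Then compute \(\norm{g_{\ell,h}}_{W_h^{-1}}^2=q_{\ell,h}^\top\widehat H_hq_{\ell,h}=\widehat\ell_h^\top\widehat H_h^{-1}\widehat\ell_h\) (your second route) and finish with Cauchy--Schwarz. The only difference is that you verify \(g_{\ell,h}=\widehat A_h^{-\top}\widehat\ell_h\) explicitly, whereas the paper only asserts it in the text just before the proposition.
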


For the query set \(\mathcal Q\), define the reduced query quantities by
\begin{equation}
    \begin{aligned}
    \widehat\ell_{i,h}&:=Z_h^\top\ell_{i,h},
    &q_{i,h}&:=q_{\ell_{i,h},h},\\
    g_{i,h}&:=g_{\ell_{i,h},h},
    &\sigma_{i,h}&:=\sigma_{\ell_{i,h},h}.
    \end{aligned}
    \label{eq:query_green_quantities}
\end{equation}
Equivalently,
\begin{equation}
    \sigma_{i,h}
    =
    \left(
    \widehat\ell_{i,h}^{\top}\widehat H_h^{-1}\widehat\ell_{i,h}
    \right)^{1/2}.
    \label{eq:query_sigma_reduced}
\end{equation}

\subsection{Transfer to the continuous target}
\label{subsec:linear_transfer}

To transfer the discrete certificate to the continuous PDE solution, choose any admissible comparison vector
\begin{equation}
    U_h^\pi = U_{0,h}+Z_h y_h^\pi \in \Uhadm.
    \label{eq:admissible_comparison_vector}
\end{equation}
This vector need not arise from a canonical coefficient map of \(\ustar\). It may be an interpolant, a constrained lift, a projection, or any other admissible numerical representation for which certified comparison estimates are available.

Define its consistency residual by
\begin{equation}
    \tau_h^\pi
    :=
    A_h U_h^\pi - b_h
    =
    \widehat A_h y_h^\pi - \widehat b_h.
    \label{eq:comparison_consistency_residual}
\end{equation}
Assume that certified bounds are available such that
\begin{equation}
    T_h\ge\norm{\tau_h^\pi}_{W_h},
    \label{eq:certified_consistency_bound}
\end{equation}
and
\begin{equation}
    \varepsilon_{i,h}^{\mathrm{eval}}
    \ge
    \abs{\ell_{i,h}^{\top}U_h^\pi-\ustar(x_i)}.
    \label{eq:certified_eval_error}
\end{equation}
Define
\begin{equation}
    \mu_{i,h}:=\sigma_{i,h}T_h+\varepsilon_{i,h}^{\mathrm{eval}}.
    \label{eq:admissible_mu}
\end{equation}

\begin{proposition}[Exact comparison identity and discrete-to-continuous bias]
\label{prop:admissible_discretization_bias}
Assume \Cref{ass:discrete_square_main}. For every admissible comparison vector \(U_h^\pi=U_{0,h}+Z_h y_h^\pi\),
\begin{equation}
    \ell_{i,h}^{\top}U_h^\star-\ustar(x_i)
    =
    -g_{i,h}^{\top}\tau_h^\pi
    +
    \bigl(\ell_{i,h}^{\top}U_h^\pi-\ustar(x_i)\bigr).
    \label{eq:exact_comparison_bias_identity}
\end{equation}
Consequently,
\begin{equation}
    \abs{\ell_{i,h}^{\top}(U_h^\star-U_h^\pi)}
    =
    \abs{g_{i,h}^{\top}\tau_h^\pi}
    \le
    \sigma_{i,h}\norm{\tau_h^\pi}_{W_h},
    \label{eq:sharp_comparison_bias}
\end{equation}
and, if \eqref{eq:certified_consistency_bound}--\eqref{eq:certified_eval_error} hold,
\begin{equation}
    \begin{multlined}
    \abs{\ell_{i,h}^{\top}U_h^\star-\ustar(x_i)}
    \le
    \abs{g_{i,h}^{\top}\tau_h^\pi}
    +
    \varepsilon_{i,h}^{\mathrm{eval}}\\
    \le
    \mu_{i,h}.
    \end{multlined}
    \label{eq:admissible_discretization_bias_bound}
\end{equation}
\end{proposition}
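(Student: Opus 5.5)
The plan is to reduce everything to the exact identities already established for the certified vector, applying them now to the comparison vector \(U_h^\pi\) in place of \(\Utheta\). The key observation is that \Cref{prop:admissible_exact_identity} and \Cref{prop:admissible_adjoint_green_identity} were derived only from admissibility \(U=U_{0,h}+Z_hy\) and \Cref{ass:discrete_square_main}; nothing specific to the neural output was used. So I would first restate the identity for an arbitrary admissible vector. From \(\tau_h^\pi=\widehat A_hy_h^\pi-\widehat b_h=\widehat A_h(y_h^\pi-y_h^\star)\) and \(\widehat A_hy_h^\star=\widehat b_h\), invertibility gives \(U_h^\pi-U_h^\star=Z_h\widehat A_h^{-1}\tau_h^\pi\).

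Next I would apply \(\ell_{i,h}^\top\). Since \(\widehat\ell_{i,h}=Z_h^\top\ell_{i,h}\) and, in the square case, \(g_{i,h}=\widehat A_h^{-\top}\widehat\ell_{i,h}\), this gives \(\ell_{i,h}^\top(U_h^\pi-U_h^\star)=\widehat\ell_{i,h}^\top\widehat A_h^{-1}\tau_h^\pi=g_{i,h}^\top\tau_h^\pi\). Writing \(\ell_{i,h}^\top U_h^\star-\ustar(x_i)=\ell_{i,h}^\top(U_h^\star-U_h^\pi)+(\ell_{i,h}^\top U_h^\pi-\ustar(x_i))\) then yields \eqref{eq:exact_comparison_bias_identity} directly. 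The sign bookkeeping here deserves care, but no further ideas are needed.

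For \eqref{eq:sharp_comparison_bias}, the equality is just the absolute value of the previous identity. The inequality is a weighted Cauchy--Schwarz step: \(g^\top\tau=(W_h^{-1/2}g)^\top(W_h^{1/2}\tau)\le\norm{g}_{W_h^{-1}}\norm{\tau}_{W_h}=\sigma_{i,h}\norm{\tau_h^\pi}_{W_h}\). This is exactly the argument of \eqref{eq:admissible_pointwise_bound}, with \(r_h\) replaced by \(\tau_h^\pi\), so I would cite it rather than repeat it. Finally, \eqref{eq:admissible_discretization_bias_bound} follows from the triangle inequality on the identity, followed by \eqref{eq:certified_consistency_bound}, \eqref{eq:certified_eval_error} and the definition \eqref{eq:admissible_mu}.

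I expect no real obstacle, since everything is linear algebra. The only point needing care is checking that \(g_{i,h}=\widehat A_h^{-\top}\widehat\ell_{i,h}\) is independent of \(W_h\) in the square case, so that the identity is exact. This follows from \(q=\widehat H_h^{-1}\widehat\ell\) and \(g=W_h\widehat A_h\widehat A_h^{-1}W_h^{-1}\widehat A_h^{-\top}\widehat\ell\), and it was already noted before \Cref{prop:admissible_exact_identity}.
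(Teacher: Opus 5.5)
Your proposal is correct and follows essentially the same route as the paper's proof. The paper also writes \(\tau_h^\pi=\widehat A_h(y_h^\pi-y_h^\star)\) and uses \(g_{i,h}=\widehat A_h^{-\top}\widehat\ell_{i,h}\) to get \(\ell_{i,h}^\top(U_h^\star-U_h^\pi)=-g_{i,h}^\top\tau_h^\pi\). It then adds and subtracts \(\ell_{i,h}^\top U_h^\pi\) and closes with weighted Cauchy--Schwarz and the certified bounds, exactly as you do.
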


\subsection{Deterministic pointwise intervals}
\label{subsec:deterministic_intervals}

\begin{theorem}[Linear deterministic certificate for a compatible point query]
\label{thm:linear_deterministic_interval}
Assume \Cref{ass:discrete_square_main}, \eqref{eq:certified_consistency_bound}, and \eqref{eq:certified_eval_error}. Let \(r_h=A_h\Utheta-b_h\), and define the corrected center
\begin{equation}
    c_{i,h}^{\mathrm{corr}}
    :=
    \ell_{i,h}^{\top}\Utheta-g_{i,h}^{\top}r_h.
    \label{eq:corrected_center}
\end{equation}
Then
\begin{equation}
    \ustar(x_i)\in
    \left[
    c_{i,h}^{\mathrm{corr}}-\mu_{i,h},
    c_{i,h}^{\mathrm{corr}}+\mu_{i,h}
    \right].
    \label{eq:corrected_interval}
\end{equation}
Equivalently, the interval centered at the numerical representation is
\begin{equation}
    \begin{multlined}
    \ustar(x_i)\in\\
    \left[
    \ell_{i,h}^{\top}\Utheta-\rho_{i,h}^{\mathrm{pred}},
    \ell_{i,h}^{\top}\Utheta+\rho_{i,h}^{\mathrm{pred}}
    \right],\\
    \rho_{i,h}^{\mathrm{pred}}
    :=
    \abs{g_{i,h}^{\top}r_h}+\mu_{i,h}.
    \end{multlined}
    \label{eq:pinn_representation_interval}
\end{equation}
If the final reported center is an unprojected neural output \(\utheta(x_i)\), define
\begin{equation}
    \rho_{i,h}^{\theta}
    :=
    \abs{\utheta(x_i)-c_{i,h}^{\mathrm{corr}}}+\mu_{i,h}.
    \label{eq:actual_pinn_output_radius}
\end{equation}
Then
\begin{equation}
    \ustar(x_i)\in
    \left[
    \utheta(x_i)-\rho_{i,h}^{\theta},
    \utheta(x_i)+\rho_{i,h}^{\theta}
    \right].
    \label{eq:actual_pinn_output_interval}
\end{equation}
Moreover,
\begin{equation}
    \rho_{i,h}^{\theta}
    \le
    \zeta_{i,h}^{\theta}
    +
    \rho_{i,h}^{\mathrm{pred}}.
\end{equation}
\end{theorem}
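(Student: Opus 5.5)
The plan is to reduce everything to the two earlier identities, \Cref{prop:admissible_adjoint_green_identity} and \Cref{prop:admissible_discretization_bias}, followed by triangle inequalities.

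\emph{Corrected interval.} Apply \eqref{eq:admissible_pointwise_identity} with \(\ell_h=\ell_{i,h}\). It gives \(\ell_{i,h}^\top\Utheta-g_{i,h}^\top r_h=\ell_{i,h}^\top U_h^\star\). Hence the corrected center equals the discrete target value, \(c_{i,h}^{\mathrm{corr}}=\ell_{i,h}^\top U_h^\star\), exactly. Then \eqref{eq:admissible_discretization_bias_bound}, which holds under \eqref{eq:certified_consistency_bound}--\eqref{eq:certified_eval_error}, gives \(\abs{c_{i,h}^{\mathrm{corr}}-\ustar(x_i)}\le\mu_{i,h}\). This is \eqref{eq:corrected_interval}.

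\emph{Prediction-centered interval.} Write \(\ustar(x_i)-\ell_{i,h}^\top\Utheta=(\ustar(x_i)-c_{i,h}^{\mathrm{corr}})-g_{i,h}^\top r_h\). The triangle inequality bounds its absolute value by \(\mu_{i,h}+\abs{g_{i,h}^\top r_h}=\rho_{i,h}^{\mathrm{pred}}\). The term is called ``equivalent'' in the statement only in the sense that it follows from the same identity; it is not literally the same interval.

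\emph{Neural-output interval.} Write \(\ustar(x_i)-\utheta(x_i)=(\ustar(x_i)-c_{i,h}^{\mathrm{corr}})+(c_{i,h}^{\mathrm{corr}}-\utheta(x_i))\). The triangle inequality bounds this by \(\mu_{i,h}+\abs{\utheta(x_i)-c_{i,h}^{\mathrm{corr}}}=\rho_{i,h}^\theta\), which gives \eqref{eq:actual_pinn_output_interval}. For the final comparison, insert \(\ell_{i,h}^\top\Utheta\):
\[
\abs{\utheta(x_i)-c_{i,h}^{\mathrm{corr}}}\le\abs{\utheta(x_i)-\ell_{i,h}^\top\Utheta}+\abs{g_{i,h}^\top r_h}=\zeta_{i,h}^\theta+\abs{g_{i,h}^\top r_h}.
\]
Here \eqref{eq:representation_mismatch} and the corrected-center definition \eqref{eq:corrected_center} are used. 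Adding \(\mu_{i,h}\) yields \(\rho_{i,h}^\theta\le\zeta_{i,h}^\theta+\rho_{i,h}^{\mathrm{pred}}\).

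There is no substantive obstacle. The only point that needs care is the first step: recognizing that the signed Green correction reproduces \(\ell_{i,h}^\top U_h^\star\) exactly. This relies on \Cref{ass:discrete_square_main} through \Cref{prop:admissible_exact_identity}, because \(g_{i,h}=\widehat A_h^{-\top}\widehat\ell_{i,h}\) in the square setting. After that, all remaining steps are triangle inequalities.
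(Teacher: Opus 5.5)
Your proposal is correct and follows the paper's proof essentially step by step. You use the adjoint Green identity to get \(c_{i,h}^{\mathrm{corr}}=\ell_{i,h}^{\top}U_h^\star\), then the comparison-bias bound for the radius \(\mu_{i,h}\), and finally triangle inequalities for the prediction-centered interval, the neural-output interval, and \(\rho_{i,h}^{\theta}\le\zeta_{i,h}^{\theta}+\rho_{i,h}^{\mathrm{pred}}\). Your remark that ``equivalently'' really means ``is implied by'' is also accurate.
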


\begin{remark}[Immediate \(P_1\) pointwise special cases]
\label{rem:p1_special_cases_after_linear_theorem}
In the mesh-based \(P_1\) setting, the theorem is applied with \(\ell_{i,h}=\ell_h(x_i)\) from \eqref{eq:p1_query_vector}. If \(x_i=a_k\) is a mesh node, then \(\ell_{i,h}=e_k\) and the certificate is a nodal-value certificate. If \(x_i\) lies inside an element, then \(\ell_{i,h}\) is the barycentric-coordinate vector on that element and the same theorem gives the off-grid point certificate. Thus nodal and off-grid point values are direct instances of the same compatible-query algebra with different choices of \(\ell_{i,h}\).
\end{remark}

\begin{proposition}[Exact correction calibrates the center]
\label{prop:exact_correction_recovers_discrete_target}
Assume \Cref{ass:discrete_square_main}. Let \(e_\theta\in\R^{n_h}\) solve
\begin{equation}
    \widehat A_h e_\theta = r_h.
\end{equation}
Then
\begin{equation}
    \Utheta-Z_h e_\theta=U_h^\star.
\end{equation}
Consequently,
\begin{equation}
    c_{i,h}^{\mathrm{corr}} = \ell_{i,h}^{\top}U_h^\star.
\end{equation}
\end{proposition}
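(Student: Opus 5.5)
The plan is to use only the invertibility in \Cref{ass:discrete_square_main}, the identity of \Cref{prop:admissible_exact_identity}, and the square-setting formula \(g_{i,h}=\widehat A_h^{-\top}\widehat\ell_{i,h}\) stated after \eqref{eq:general_sigma_definition}.

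\emph{First claim.} Since \(\widehat A_h\) is invertible, the equation \(\widehat A_h e_\theta=r_h\) has the unique solution \(e_\theta=\widehat A_h^{-1}r_h\). By \Cref{prop:admissible_exact_identity}, \(\Utheta-U_h^\star=Z_h\widehat A_h^{-1}r_h=Z_he_\theta\), which rearranges to \(\Utheta-Z_he_\theta=U_h^\star\). A self-contained check is also available, and I would include it to avoid circularity. By \eqref{eq:admissible_pinn_residual}, \(r_h=\widehat A_hy_\theta-\widehat b_h\), so \(e_\theta=y_\theta-\widehat A_h^{-1}\widehat b_h=y_\theta-y_h^\star\). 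Then \(\Utheta-Z_he_\theta=U_{0,h}+Z_hy_h^\star=U_h^\star\) by \eqref{eq:admissible_discrete_target}.

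\emph{Second claim.} Apply \(\ell_{i,h}^\top\) to the identity just obtained:
\[
\ell_{i,h}^\top U_h^\star=\ell_{i,h}^\top\Utheta-\ell_{i,h}^\top Z_he_\theta=\ell_{i,h}^\top\Utheta-\widehat\ell_{i,h}^{\top}\widehat A_h^{-1}r_h .
\]
The last term equals \(\bigl(\widehat A_h^{-\top}\widehat\ell_{i,h}\bigr)^\top r_h=g_{i,h}^\top r_h\). Comparing with the definition \eqref{eq:corrected_center} gives \(c_{i,h}^{\mathrm{corr}}=\ell_{i,h}^\top U_h^\star\). Alternatively, this step follows at once from \eqref{eq:admissible_pointwise_identity} in \Cref{prop:admissible_adjoint_green_identity}.

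\emph{Main obstacle.} The only delicate point is confirming \(g_{i,h}=\widehat A_h^{-\top}\widehat\ell_{i,h}\) in the square case independently of \(W_h\). From \(\widehat H_h q=\widehat\ell\) one gets \(q=\widehat A_h^{-1}W_h^{-1}\widehat A_h^{-\top}\widehat\ell\). Hence \(W_h\widehat A_hq=\widehat A_h^{-\top}\widehat\ell\). I would verify this line explicitly, since it is what makes the corrected center agree exactly with the discrete target regardless of the chosen residual weight.
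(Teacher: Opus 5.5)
Your proposal is correct and follows the paper's proof: you solve for \(e_\theta=y_\theta-y_h^\star\) using invertibility of \(\widehat A_h\) and substitute back to get \(U_h^\star\). For the center identity, the paper simply cites \eqref{eq:corrected_center}; your explicit check that \(g_{i,h}=\widehat A_h^{-\top}\widehat\ell_{i,h}\), independently of \(W_h\), supplies the step that the paper leaves implicit through \Cref{prop:admissible_adjoint_green_identity}.
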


\begin{remark}[Interpretation of exact linear correction]
\label{rem:pinn_role_linear_exact_correction}
For a linear compatible residual system, exact correction centers the interval at the compatible discrete target of the chosen square reduced system:
\[
    c_{i,h}^{\mathrm{corr}}
    =
    \ell_{i,h}^\top U_h^\star.
\]
In that regime the certificate calibrates the trained field to a specified numerical solution. The trained admissible state remains useful when exact correction is avoided or when certification is reused: norm-only certification, early-stopped or inexact correction, localized or randomized certificates, nonlinear certification around a local base point, or many-instance amortized evaluation. In all of those regimes the model supplies the admissible coefficient vector whose compatible residual is being certified.
\end{remark}

\begin{remark}[What remains non-computable after the discrete correction]
\label{rem:a_priori_error}
For the linear certificate, the discrete quantities \((g_{i,h})\), \((\sigma_{i,h})\), \(g_{i,h}^{\top}r_h\), and \(c_{i,h}^{\mathrm{corr}}\) are computable from the compatible discrete system. The non-computable part of the final interval is the transfer layer, namely \(T_h\) and \(\varepsilon_{i,h}^{\mathrm{eval}}\). The next subsection first gives an a priori specialization for coercive Galerkin problems and then one fully computable a posteriori transfer theorem in a concrete coercive class.
\end{remark}

\section{Linear Variants and Specializations}
\label{sec:linear_refinements}

This section gives computational variants for the square reduced setting: norm-only certificates, a standard coercive Galerkin specialization, localization, randomized sensitivity estimation, and inexact linear algebra. The effective-residual projector and the full-column-rank overdetermined extension are given in \Cref{appsec:linear_rectangular_extension}.

\subsection{Norm-only certificates and width decomposition}
\label{subsec:width_decomposition}

In the square reduced setting,
\begin{equation}
    \abs{g_{i,h}^{\top}r_h}
    \le
    \sigma_{i,h}\norm{r_h}_{W_h}.
    \label{eq:projected_norm_bound}
\end{equation}

Define the three radii
\begin{align}
    \rho_{i,h}^{\mathrm{corr}}
    &:=
    \sigma_{i,h}T_h+\varepsilon_{i,h}^{\mathrm{eval}},
    \label{eq:corrected_radius_decomposition}\\
    \rho_{i,h}^{\mathrm{pred}}
    &:=
    \abs{g_{i,h}^{\top}r_h}
    +
    \sigma_{i,h}T_h
    +
    \varepsilon_{i,h}^{\mathrm{eval}},
    \label{eq:p1_radius_decomposition}\\
    \rho_{i,h}^{\mathrm{norm}}
    &:=
    \sigma_{i,h}\norm{r_h}_{W_h}
    +
    \sigma_{i,h}T_h
    +
    \varepsilon_{i,h}^{\mathrm{eval}}.
    \label{eq:norm_radius_decomposition}
\end{align}
Then
\begin{align}
    \abs{c_{i,h}^{\mathrm{corr}}-\ustar(x_i)}
    &\le
    \rho_{i,h}^{\mathrm{corr}},
    \label{eq:corrected_radius_bound}\\
    \abs{\ell_{i,h}^{\top}\Utheta-\ustar(x_i)}
    &\le
    \rho_{i,h}^{\mathrm{pred}}
    \le
    \rho_{i,h}^{\mathrm{norm}},
    \label{eq:prediction_centered_radius_chain}\\
    \abs{\utheta(x_i)-\ustar(x_i)}
    &\le
    \zeta_{i,h}^{\theta}+\rho_{i,h}^{\mathrm{norm}}.
    \label{eq:raw_output_radius_chain}
\end{align}
If only an a priori transfer rate \(T_h\le C_\tau h^\alpha\) is available, it can be directly substituted into \eqref{eq:corrected_radius_decomposition}--\eqref{eq:norm_radius_decomposition}.

The width has three mathematically distinct sources:
\begin{equation}
    \begin{multlined}
    \rho_{i,h}^{\mathrm{norm}}
    =
    \underbrace{\sigma_{i,h}\norm{r_h}_{W_h}}_{\text{discrete compatible error bound}}\\
    +
    \underbrace{\sigma_{i,h}T_h}_{\text{discrete-to-continuous transfer}}
    +
    \underbrace{\varepsilon_{i,h}^{\mathrm{eval}}}_{\text{evaluation comparison}}.
    \end{multlined}
    \label{eq:norm_radius_source_split}
\end{equation}
The width also includes the additional term
\(
    \zeta_{i,h}^{\theta}
\)
only when the reported center is an unprojected neural output rather than the certified reconstruction. For the signed prediction-centered interval, replace the first term in \eqref{eq:norm_radius_source_split} by \(\abs{g_{i,h}^{\top}r_h}\).

A large interval radius may result from large sensitivity \(\sigma_{i,h}\), large discrete compatible error, large transfer residual \(T_h\), or large representation/evaluation mismatch. These contributions should be reported separately. The sharper overdetermined refinement based on the effective-residual projector \(P_h^{\mathrm{eff}}\) is given in \Cref{appsec:linear_rectangular_extension}.

\subsection{Finite-element application for coercive Dirichlet elliptic problems}
\label{subsec:coercive_galerkin_specialization}

The abstract transfer term becomes explicit after \(T_h\) and \(\varepsilon_{i,h}^{\mathrm{eval}}\) are instantiated for a PDE class. The finite-element ingredients used below are standard conforming Galerkin theory, Céa-type quasi-optimality, and \(P_1\) interpolation estimates on shape-regular meshes \cite{BrennerScott2008FEM}. Classical pointwise finite-element estimates and goal-oriented a posteriori estimators may supply pointwise transfer bounds \cite{RannacherScott1982OptimalPointwise,PrudhommeOden1999GoalOrientedPointwise,OdenPrudhomme2001GoalOriented,BeckerRannacher2001OptimalControlAPosteriori}. A standard linear specialization is the conforming \(P_1\) Galerkin discretization of a coercive second-order Dirichlet problem.\newline

Assume \(V=H_0^1(\Omega)\) and that the continuous problem is
\begin{equation}
    a(u^\star,v)=L(v)
    \qquad\text{for all }v\in V,
    \label{eq:coercive_dirichlet_weak_form}
\end{equation}
with
\begin{equation}
    a(u,v)
    :=
    \int_\Omega \mathbf A(x)\nabla u\cdot\nabla v + c(x)uv\,\dd x,
    \label{eq:coercive_bilinear_form}
\end{equation}
where \(\mathbf A\in L^\infty(\Omega;\R_{\mathrm{sym}}^{d\times d})\) is uniformly elliptic and \(c\in L^\infty(\Omega)\) satisfies \(c\ge 0\). Assume
\begin{equation}
    \alpha_a \norm{v}_{H_0^1(\Omega)}^2
    \le
    a(v,v),
    \qquad
    \abs{a(u,v)}
    \le
    M_a \norm{u}_{H_0^1(\Omega)}\norm{v}_{H_0^1(\Omega)}.
\end{equation}
Let \(u_h^{\mathrm{FE}}\in V_h\) denote the conforming \(P_1\) Galerkin solution satisfying
\begin{equation}
    a(u_h^{\mathrm{FE}},v_h)=L(v_h)
    \qquad\text{for all }v_h\in V_h.
    \label{eq:galerkin_fe_problem}
\end{equation}
Write
\[
    u_h^{\mathrm{FE}}=\Ih U_h^{\mathrm{FE}}
    =
    \Ih(U_{0,h}+Z_h y_h^{\mathrm{FE}}).
\]
After elimination of essential constraints, the reduced Galerkin matrix \(\widehat A_h\) is square and symmetric positive definite. Set
\begin{equation}
    \norm{v}_{a}:=a(v,v)^{1/2}.
\end{equation}

\begin{proposition}[Exact correction equals the Galerkin solution]
\label{prop:coercive_galerkin_transfer}
In the square coercive Galerkin setting above,
\begin{equation}
    U_h^\star=U_h^{\mathrm{FE}},
    \qquad
    c_{i,h}^{\mathrm{corr}}=u_h^{\mathrm{FE}}(x_i).
\end{equation}
Hence any rigorous pointwise finite-element bound
\begin{equation}
    \varepsilon_{i,h}^{\mathrm{FE}}
    \ge
    \abs{u_h^{\mathrm{FE}}(x_i)-u^\star(x_i)}
\end{equation}
immediately yields the corrected certificate
\begin{equation}
    u^\star(x_i)
    \in
    \left[
    c_{i,h}^{\mathrm{corr}}-\varepsilon_{i,h}^{\mathrm{FE}},
    \;
    c_{i,h}^{\mathrm{corr}}+\varepsilon_{i,h}^{\mathrm{FE}}
    \right].
    \label{eq:coercive_galerkin_corrected_interval}
\end{equation}
\end{proposition}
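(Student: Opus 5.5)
The plan is to identify the algebraic linear system \(A_hU=b_h\), in the coercive Galerkin setting, with the Galerkin equations \eqref{eq:galerkin_fe_problem} written in the nodal basis. Then use uniqueness of the solution of the square reduced system to conclude \(U_h^\star=U_h^{\mathrm{FE}}\). The corrected-center identity then follows from \Cref{prop:exact_correction_recovers_discrete_target}, and the interval from the triangle inequality.

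First, I would make the standard convention explicit. The admissible affine space \(\Uhadm=U_{0,h}+Z_h\R^{n_h}\) encodes the essential (Dirichlet) constraints, so \(\Ih(Z_h\R^{n_h})\) is the discrete test space \(V_h\subset H_0^1(\Omega)\). The reduced system \(\widehat A_hy=\widehat b_h\) consists of the Galerkin equations tested against the basis functions \(\Ih Z_h e_k\), \(k=1,\dots,n_h\). Concretely,
\[
(\widehat A_h)_{kj}=a(\Ih Z_he_j,\Ih Z_he_k),
\qquad
(\widehat b_h)_k=L(\Ih Z_he_k)-a(\Ih U_{0,h},\Ih Z_he_k).
\]
Since \(u_h^{\mathrm{FE}}=\Ih(U_{0,h}+Z_hy_h^{\mathrm{FE}})\) satisfies \eqref{eq:galerkin_fe_problem} for every \(v_h\) in that span, bilinearity gives \(\widehat A_hy_h^{\mathrm{FE}}=\widehat b_h\).

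Second, coercivity gives \(y^\top\widehat A_hy=a(\Ih Z_hy,\Ih Z_hy)\ge\alpha_a\norm{\Ih Z_hy}_{H_0^1}^2\). This quantity is strictly positive for \(y\neq0\), because \(\Ih\) is injective on nodal coefficients and \(Z_h\) has full column rank. Hence \(\widehat A_h\) is symmetric positive definite, so \Cref{ass:discrete_square_main} holds. Then \(y_h^\star=\widehat A_h^{-1}\widehat b_h=y_h^{\mathrm{FE}}\), and therefore \(U_h^\star=U_h^{\mathrm{FE}}\) by \eqref{eq:admissible_discrete_target}.

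Third, I would invoke \Cref{prop:exact_correction_recovers_discrete_target} to get \(c_{i,h}^{\mathrm{corr}}=\ell_{i,h}^\top U_h^\star=\ell_{i,h}^\top U_h^{\mathrm{FE}}\). By \eqref{eq:p1_query_vector}, this equals \((\Ih U_h^{\mathrm{FE}})(x_i)=u_h^{\mathrm{FE}}(x_i)\). The bound \(\abs{c_{i,h}^{\mathrm{corr}}-u^\star(x_i)}=\abs{u_h^{\mathrm{FE}}(x_i)-u^\star(x_i)}\le\varepsilon_{i,h}^{\mathrm{FE}}\) is then exactly the membership \eqref{eq:coercive_galerkin_corrected_interval}.

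The only delicate point is the first step. Nothing new must be proved there, but the statement implicitly assumes that \(A_h,b_h\) are the assembled Galerkin matrix and load, with constraints eliminated consistently through \(U_{0,h},Z_h\). Any quadrature applied to \(L\) must also be the same one used to define \(u_h^{\mathrm{FE}}\). I would state this identification as the meaning of "the square coercive Galerkin setting above" and note that it is a hypothesis, not a consequence. If quadrature differs between the two, the identity fails, and the discrepancy would have to be absorbed into \(\varepsilon_{i,h}^{\mathrm{FE}}\).
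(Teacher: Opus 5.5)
Your proposal is correct and follows the paper's argument: the reduced Galerkin coefficients satisfy \(\widehat A_h y_h^{\mathrm{FE}}=\widehat b_h\), invertibility of the square reduced system forces \(y_h^\star=y_h^{\mathrm{FE}}\), and the corrected-center identity and the interval then follow immediately. The paper takes the identification of the reduced system with the Galerkin equations, and the positive definiteness of \(\widehat A_h\), as part of the stated setting. You derive both explicitly from the assembly convention and from coercivity. You also state the quadrature-consistency caveat, which the paper covers through its scope assumption (iii).
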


Thus, in the exact-correction regime, the remaining continuous-transfer task is the pointwise finite-element error problem for the chosen PDE class. The certificate uses such bounds as external transfer inputs and does not alter the finite-dimensional discrete correction.

Let \(\mathcal J_h:C^0(\OmegaBar)\to V_h\) denote the nodal interpolation operator and set
\[
    u_h^\pi:=\mathcal J_h u^\star,
    \qquad
    U_h^\pi \text{ the coefficient vector of }u_h^\pi.
\]

The next bound is an a priori instantiation of the transfer interface obtained by combining the comparison identity above with the standard finite-element estimates cited at the start of this subsection.

\begin{theorem}[Interpolation-based explicit transfer bound]
\label{thm:interpolation_transfer_bound}
Assume that the mesh family is quasi-uniform, that \(u^\star\in H^2(\Omega)\), and that for each query point \(x_i\) one has \(u^\star\in W^{2,\infty}(\omega_i)\), where \(\omega_i\) is the nodal patch of any element containing \(x_i\). Assume also that \(W_h=\widehat A_h^{-1}\). Then
\begin{equation}
    T_h
    :=
    \norm{\tau_h^\pi}_{W_h}
    \le
    C_{\mathrm{int}} h \abs{u^\star}_{H^2(\Omega)},
    \label{eq:explicit_transfer_residual_bound}
\end{equation}
where \(C_{\mathrm{int}}>0\) depends only on \(\alpha_a\), \(M_a\), and the shape regularity of the mesh family. Moreover, for every query point \(x_i\),
\begin{equation}
    \varepsilon_{i,h}^{\mathrm{eval}}
    :=
    \abs{u_h^\pi(x_i)-u^\star(x_i)}
    \le
    C_{\mathrm{pt}} h^2 \abs{u^\star}_{W^{2,\infty}(\omega_i)}.
    \label{eq:explicit_transfer_eval_bound}
\end{equation}
If \(x_i\) is a mesh node, then \(\varepsilon_{i,h}^{\mathrm{eval}}=0\). Consequently,
\begin{equation}
    \begin{multlined}
    \abs{\ell_{i,h}^\top U_h^\star-u^\star(x_i)}
    \le
    C_{\mathrm{int}}\sigma_{i,h} h \abs{u^\star}_{H^2(\Omega)}\\
    +
    C_{\mathrm{pt}} h^2 \abs{u^\star}_{W^{2,\infty}(\omega_i)}.
    \end{multlined}
    \label{eq:coercive_transfer_split}
\end{equation}
\end{theorem}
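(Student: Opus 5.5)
The plan is to identify \(\norm{\tau_h^\pi}_{W_h}\) with a dual norm of a Galerkin residual, and then reduce it to a Céa-type energy error. With \(W_h=\widehat A_h^{-1}\), and \(\widehat A_h\) symmetric positive definite in the coercive Galerkin setting, the residual satisfies \(\tau_h^\pi=\widehat A_h(y_h^\pi-y_h^{\mathrm{FE}})\). This follows because \(\widehat b_h=\widehat A_h y_h^{\mathrm{FE}}\) by \Cref{prop:coercive_galerkin_transfer}. Hence
\[
    \norm{\tau_h^\pi}_{W_h}^2=(y_h^\pi-y_h^{\mathrm{FE}})^\top\widehat A_h(y_h^\pi-y_h^{\mathrm{FE}})=\norm{u_h^\pi-u_h^{\mathrm{FE}}}_a^2 .
\]
For this step I need \(U_h^\pi\) to be admissible. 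Nodal interpolation of \(u^\star\in H_0^1\cap C^0\) gives zero boundary nodes, so \(U_h^\pi=U_{0,h}+Z_hy_h^\pi\) with the homogeneous lift.

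Next I split the energy difference with the triangle inequality:
\[
    \norm{u_h^\pi-u_h^{\mathrm{FE}}}_a\le\norm{u^\star-\mathcal J_hu^\star}_a+\norm{u^\star-u_h^{\mathrm{FE}}}_a .
\]
Galerkin orthogonality makes \(u_h^{\mathrm{FE}}\) the \(a\)-orthogonal projection, so the second term is at most the first. This gives \(T_h\le 2\norm{u^\star-\mathcal J_hu^\star}_a\le 2M_a^{1/2}\norm{u^\star-\mathcal J_hu^\star}_{H^1}\). The standard \(P_1\) interpolation estimate \(\norm{v-\mathcal J_hv}_{H^1}\le C h\abs{v}_{H^2}\) on shape-regular meshes then yields \eqref{eq:explicit_transfer_residual_bound}. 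Here \(C_{\mathrm{int}}=2M_a^{1/2}C\), and the \(L^2\) part of the \(H^1\) norm is absorbed for \(h\le \mathrm{diam}\,\Omega\).

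A subtle point is that \(\mathcal J_h\) needs point values. This is where \(d\le 3\) enters through \(H^2\subset C^0\); I would state this dependence explicitly. The alternative is to read \(H^2\)-continuity as implicit in the hypotheses. The constant then depends only on \(M_a\) and the shape regularity, which is consistent with the claimed dependence. Coercivity \(\alpha_a\) is needed only to make \(\widehat A_h\) SPD. Quasi-uniformity is used only to write a single \(h\).

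For the evaluation bound, I let \(T\ni x_i\) be the element used in \(\ell_{i,h}\). I will use the local estimate \(\norm{v-\mathcal J_hv}_{L^\infty(T)}\le C_{\mathrm{pt}}h_T^2\abs{v}_{W^{2,\infty}(T)}\), which follows from Bramble--Hilbert on the reference element and affine scaling, with \(T\subset\omega_i\). This gives \eqref{eq:explicit_transfer_eval_bound}. At a node, \(u_h^\pi(a_k)=u^\star(a_k)\), so the evaluation error vanishes.

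The split \eqref{eq:coercive_transfer_split} then follows directly from \eqref{eq:admissible_discretization_bias_bound} in \Cref{prop:admissible_discretization_bias}, with \(\mu_{i,h}=\sigma_{i,h}T_h+\varepsilon_{i,h}^{\mathrm{eval}}\).

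I expect the only genuine obstacle to be the first identity, namely matching the \(W_h=\widehat A_h^{-1}\) residual norm to the energy norm. It requires checking that the reduced Galerkin matrix is the Gram matrix of \(a\) on the free basis functions. It also requires checking that the lift terms cancel, so that \(\widehat A_h y_h^\pi-\widehat b_h\) really represents \(v_h\mapsto a(u_h^\pi-u_h^{\mathrm{FE}},v_h)\).
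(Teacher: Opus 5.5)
Your proposal is correct and follows the paper's proof essentially step for step. You use the identity \(\tau_h^\pi=\widehat A_h(y_h^\pi-y_h^{\mathrm{FE}})\) to turn \(\norm{\tau_h^\pi}_{W_h}\) into \(\norm{u_h^\pi-u_h^{\mathrm{FE}}}_a\), bound that by Céa-type quasi-optimality plus \(P_1\) interpolation, obtain the evaluation term from the local interpolation estimate, and combine via \Cref{prop:admissible_discretization_bias}. Your extra details only make explicit what the paper leaves implicit: the energy best-approximation step (which even gives constant \(1\) instead of \(2\), since \(u_h^\pi-u_h^{\mathrm{FE}}\) is the \(a\)-projection of \(u_h^\pi-u^\star\)) and the requirement \(u^\star\in C^0(\OmegaBar)\), e.g.\ via \(d\le 3\), for \(\mathcal J_h\) to be defined.
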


\begin{corollary}[A priori certificate decomposition for coercive Galerkin problems]
\label{cor:coercive_galerkin_certificate_split}
Under the hypotheses of \Cref{thm:interpolation_transfer_bound},
\begin{equation}
    \abs{\uhtheta(x_i)-u^\star(x_i)}
    \le
    \abs{g_{i,h}^\top r_h}
    +
    C_{\mathrm{int}}\sigma_{i,h} h \abs{u^\star}_{H^2(\Omega)}
    +
    C_{\mathrm{pt}} h^2 \abs{u^\star}_{W^{2,\infty}(\omega_i)},
    \label{eq:coercive_signed_split}
\end{equation}
and, using only residual norms,
\begin{equation}
    \begin{multlined}
    \abs{\uhtheta(x_i)-u^\star(x_i)}
    \le
    \sigma_{i,h}\norm{r_h}_{W_h}
    +
    C_{\mathrm{int}}\sigma_{i,h} h \abs{u^\star}_{H^2(\Omega)}\\
    +
    C_{\mathrm{pt}} h^2 \abs{u^\star}_{W^{2,\infty}(\omega_i)}.
    \end{multlined}
    \label{eq:coercive_norm_split}
\end{equation}
If the interval is centered at the unprojected neural output \(u_\theta(x_i)\), add \(\zeta_{i,h}^{\theta}\).
\end{corollary}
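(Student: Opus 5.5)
The statement is the corollary, so the plan is to chain the triangle inequality with results already established.

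First, since the field is compatible, \(\uhtheta(x_i)=\ell_{i,h}^\top\Utheta\). Split the error as
\[
    \uhtheta(x_i)-u^\star(x_i)
    =
    \ell_{i,h}^\top(\Utheta-U_h^\star)
    +
    \bigl(\ell_{i,h}^\top U_h^\star-u^\star(x_i)\bigr).
\]
The first term is handled by \Cref{prop:admissible_adjoint_green_identity}. It equals \(g_{i,h}^\top r_h\) exactly, and its absolute value is at most \(\sigma_{i,h}\norm{r_h}_{W_h}\) by \eqref{eq:admissible_pointwise_bound}. Assumption \Cref{ass:discrete_square_main} holds here because the reduced Galerkin matrix is square and symmetric positive definite after eliminating the essential constraints. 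This holds for any SPD \(W_h\), in particular for \(W_h=\widehat A_h^{-1}\) as required in \Cref{thm:interpolation_transfer_bound}.

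The second term is exactly what \eqref{eq:coercive_transfer_split} in \Cref{thm:interpolation_transfer_bound} bounds. It is at most \(C_{\mathrm{int}}\sigma_{i,h}h\abs{u^\star}_{H^2(\Omega)}+C_{\mathrm{pt}}h^2\abs{u^\star}_{W^{2,\infty}(\omega_i)}\), which follows from \Cref{prop:admissible_discretization_bias} with \(U_h^\pi=\mathcal J_h u^\star\). Adding the two bounds gives \eqref{eq:coercive_signed_split} with the signed term. Replacing the signed term by its norm bound gives \eqref{eq:coercive_norm_split}.

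For the unprojected output, write
\[
    \abs{u_\theta(x_i)-u^\star(x_i)}
    \le
    \abs{u_\theta(x_i)-\ell_{i,h}^\top\Utheta}
    +
    \abs{\uhtheta(x_i)-u^\star(x_i)}.
\]
By \eqref{eq:representation_mismatch} the first term is \(\zeta_{i,h}^\theta\), so it is simply added to either bound.

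The only point that needs care is consistency of the residual norm. The \(W_h\) used in \(\sigma_{i,h}\) and \(\norm{r_h}_{W_h}\) must be the same \(W_h=\widehat A_h^{-1}\) under which \(T_h\) was bounded; using one fixed \(W_h\) throughout keeps the constants unchanged. Beyond that there is no real obstacle: the corollary is a direct combination of the earlier results.
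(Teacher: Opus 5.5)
Your proposal is correct and follows essentially the paper's route: the paper proves the corollary by combining \Cref{thm:linear_deterministic_interval} with the bounds on \(T_h\) and \(\varepsilon_{i,h}^{\mathrm{eval}}\) from \Cref{thm:interpolation_transfer_bound}. That theorem's prediction-centered radius \(\abs{g_{i,h}^{\top}r_h}+\sigma_{i,h}T_h+\varepsilon_{i,h}^{\mathrm{eval}}\) is obtained from exactly your split via \Cref{prop:admissible_adjoint_green_identity,prop:admissible_discretization_bias}, and your insistence that the same \(W_h=\widehat A_h^{-1}\) be used in \(\sigma_{i,h}\), \(\norm{r_h}_{W_h}\), and \(T_h\) is the right consistency check.
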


\begin{remark}[Point queries in three dimensions]
\label{rem:3d_point_queries}
For point queries in \(d\ge 2\), the factor \(\sigma_{i,h}\) in \eqref{eq:coercive_transfer_split} and \eqref{eq:coercive_norm_split} may grow under refinement. The signed term \(\abs{g_{i,h}^{\top}\tau_h^\pi}\) and the norm route \(\sigma_{i,h}T_h\) should therefore be interpreted separately. The refinement study in \Cref{subsec:exp-3d-pointwise} shows this separation directly for point queries in three dimensions: the signed term tracks the pointwise bias, while the norm route is wider. Fixed-radius averages remain better behaved on the same meshes and are the reported quantities in the large-scale SimJEB study.
\end{remark}

\begin{remark}[A priori versus fully computable transfer]
\label{rem:apriori_transfer_computability}
\Cref{thm:interpolation_transfer_bound,cor:coercive_galerkin_certificate_split} are rigorous a priori specializations. For a fully computable numerical certificate, the Sobolev seminorms of \(u^\star\) appearing there must themselves be upper-bounded by verified regularity estimates or replaced by rigorous a posteriori transfer bounds. The next theorem gives such a fully computable a posteriori instantiation in a concrete coercive class.
\end{remark}

\begin{theorem}[Fully computable a posteriori transfer for one-dimensional coercive reaction--diffusion]
\label{thm:1d_fully_computable_transfer}
Assume \(\Omega=(a,b)\subset\R\), \(V=H_0^1(a,b)\), and
\begin{equation}
    a(u,v)
    :=
    \int_a^b \kappa(x)u'(x)v'(x)+c(x)u(x)v(x)\,\dd x,
\end{equation}
where \(\kappa\in L^\infty(a,b)\) is piecewise constant on \(\Th\), with
\[
    \kappa|_K=\kappa_K>0
    \qquad\text{for every }K\in\Th,
\]
and \(c\in L^\infty(a,b)\) satisfies \(c\ge 0\). Let \(u^\star\in V\) solve
\begin{equation}
    a(u^\star,v)
    =
    \int_a^b f(x)v(x)\,\dd x
    \qquad\text{for all }v\in V,
\end{equation}
with \(f\in L^2(a,b)\). Let \(u_h^{\mathrm{FE}}\in V_h\) be the conforming \(P_1\) Galerkin solution satisfying
\begin{equation}
    a(u_h^{\mathrm{FE}},v_h)
    =
    \int_a^b f(x)v_h(x)\,\dd x
    \qquad\text{for all }v_h\in V_h.
\end{equation}
For each element \(K\in\Th\), write \(h_K:=|K|\) and define
\begin{equation}
    R_K
    :=
    f-c\,u_h^{\mathrm{FE}}
    \qquad\text{on }K,
\end{equation}
\begin{equation}
    \eta_{K,h}^{\mathrm{tr}}
    :=
    \frac{h_K}{\pi\sqrt{\kappa_K}}
    \norm{R_K}_{L^2(K)},
    \qquad
    \eta_h^{\mathrm{tr}}
    :=
    \left(
    \sum_{K\in\Th}
    \bigl(\eta_{K,h}^{\mathrm{tr}}\bigr)^2
    \right)^{1/2}.
\end{equation}
Then
\begin{equation}
    \norm{u^\star-u_h^{\mathrm{FE}}}_{a}
    \le
    \eta_h^{\mathrm{tr}}.
\end{equation}
Moreover, for every query point \(x_i\in[a,b]\),
\begin{equation}
    C_{i,h}^{\mathrm{pt}}
    :=
    \min\left\{
    \left(\int_a^{x_i}\kappa(x)^{-1}\,\dd x\right)^{1/2},
    \left(\int_{x_i}^{b}\kappa(x)^{-1}\,\dd x\right)^{1/2}
    \right\}
\end{equation}
satisfies
\begin{equation}
    \abs{u^\star(x_i)-u_h^{\mathrm{FE}}(x_i)}
    \le
    C_{i,h}^{\mathrm{pt}}\,\eta_h^{\mathrm{tr}}.
\end{equation}
In the notation of \Cref{subsec:linear_transfer}, one may take
\begin{equation}
    U_h^\pi = U_h^{\mathrm{FE}} = U_h^\star,
    \qquad
    T_h = 0,
    \qquad
    \varepsilon_{i,h}^{\mathrm{eval}}
    =
    C_{i,h}^{\mathrm{pt}}\,\eta_h^{\mathrm{tr}}.
    \label{eq:1d_computable_transfer_terms}
\end{equation}
Consequently,
\begin{equation}
    u^\star(x_i)\in
    \left[
    c_{i,h}^{\mathrm{corr}}-C_{i,h}^{\mathrm{pt}}\,\eta_h^{\mathrm{tr}},
    \;
    c_{i,h}^{\mathrm{corr}}+C_{i,h}^{\mathrm{pt}}\,\eta_h^{\mathrm{tr}}
    \right].
    \label{eq:1d_computable_transfer_interval}
\end{equation}
\end{theorem}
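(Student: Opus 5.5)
Our plan has three steps: an energy-norm a posteriori bound via Galerkin orthogonality and elementwise Poincaré inequalities, a one-dimensional pointwise bound via the fundamental theorem of calculus and Cauchy--Schwarz, and an assembly step that reuses \Cref{prop:coercive_galerkin_transfer} and \Cref{thm:linear_deterministic_interval}.

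\textbf{Energy-norm bound.} Set \(e:=u^\star-u_h^{\mathrm{FE}}\in H_0^1(a,b)\). Since \(a(\cdot,\cdot)\) is symmetric and coercive, we have \(\norm{e}_a^2=a(e,e)\). Galerkin orthogonality gives \(a(e,v_h)=0\) for all \(v_h\in V_h\), so \(\norm{e}_a^2=a(e,e-\mathcal J_h e)\). We choose the nodal interpolant \(\mathcal J_h e\), which is well defined in 1D because \(H^1\subset C^0\). Then \(w:=e-\mathcal J_h e\) vanishes at every mesh node. Writing \(a(e,w)=a(u^\star,w)-a(u_h^{\mathrm{FE}},w)\), we integrate the diffusion part of the second term by parts elementwise. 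On each \(K\), \(\kappa_K(u_h^{\mathrm{FE}})'\) is constant, so its derivative vanishes. The boundary terms also vanish, since \(w\) is zero at both endpoints of \(K\). This leaves
\[
a(e,w)=\sum_{K}\int_K (f-c\,u_h^{\mathrm{FE}})\,w\,\dd x=\sum_K\int_K R_K\,w\,\dd x.
\]
On each element \(w\in H_0^1(K)\), so the sharp 1D Poincaré inequality \(\norm{w}_{L^2(K)}\le \frac{h_K}{\pi}\norm{w'}_{L^2(K)}\) applies. We also need \(\norm{w'}_{L^2(K)}\le\norm{e'}_{L^2(K)}\). This holds because \(w'=e'-\bar e'_K\), where \(\bar e'_K\) is the mean of \(e'\) on \(K\) (the slope of the interpolant), and subtracting the mean is an \(L^2\)-orthogonal projection. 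Combining,
\[
\int_K R_K w\le \frac{h_K}{\pi\sqrt{\kappa_K}}\norm{R_K}_{L^2(K)}\,\sqrt{\kappa_K}\norm{e'}_{L^2(K)}.
\]
Summing over \(K\) with Cauchy--Schwarz and using \(\sum_K\kappa_K\norm{e'}^2_{L^2(K)}\le \norm{e}_a^2\) (because \(c\ge0\)) yields \(\norm{e}_a^2\le\eta_h^{\mathrm{tr}}\norm{e}_a\). Dividing through gives the energy bound.

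\textbf{Pointwise bound.} Since \(e(a)=0\), we write \(e(x_i)=\int_a^{x_i}e'\,\dd x=\int_a^{x_i}\kappa^{-1/2}\,\kappa^{1/2}e'\,\dd x\). Cauchy--Schwarz then gives
\[
\abs{e(x_i)}\le\Bigl(\int_a^{x_i}\kappa^{-1}\Bigr)^{1/2}\Bigl(\int_a^b\kappa (e')^2\Bigr)^{1/2}\le\Bigl(\int_a^{x_i}\kappa^{-1}\Bigr)^{1/2}\norm{e}_a.
\]
The symmetric argument from the right endpoint \(b\) gives the other term in the minimum. Together these give \(\abs{e(x_i)}\le C^{\mathrm{pt}}_{i,h}\,\eta_h^{\mathrm{tr}}\).

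\textbf{Assembly.} By \Cref{prop:coercive_galerkin_transfer}, we have \(U_h^\star=U_h^{\mathrm{FE}}\). We therefore take the comparison vector \(U_h^\pi:=U_h^{\mathrm{FE}}\), which is admissible. Its consistency residual \(\tau_h^\pi=\widehat A_h y_h^\star-\widehat b_h\) vanishes, so \(T_h=0\) is a valid choice in \eqref{eq:certified_consistency_bound}. Because \(\ell_{i,h}^\top U_h^{\mathrm{FE}}=u_h^{\mathrm{FE}}(x_i)\), the pointwise bound shows that \(\varepsilon^{\mathrm{eval}}_{i,h}=C^{\mathrm{pt}}_{i,h}\eta_h^{\mathrm{tr}}\) satisfies \eqref{eq:certified_eval_error}. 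Then \(\mu_{i,h}=\varepsilon^{\mathrm{eval}}_{i,h}\), and \Cref{thm:linear_deterministic_interval} gives \eqref{eq:1d_computable_transfer_interval}.

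\textbf{Expected obstacle.} The delicate step is the elementwise integration by parts when \(f\) and \(c\) are only \(L^\infty\)/\(L^2\). It needs \(u^\star\) only in the weak form, which is the case here because \(a(u^\star,w)=\int fw\) is used directly. It also needs the discrete fluxes to be constant per element, which requires \(\kappa\) to be piecewise constant on the same mesh \(\Th\). A second point to check is the stability \(\norm{w'}_{L^2(K)}\le\norm{e'}_{L^2(K)}\). We handle it with the mean-projection argument above, which avoids any unknown interpolation constant.
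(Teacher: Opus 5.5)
Your proposal is correct and matches the paper's proof step for step: Galerkin orthogonality tested with $e-\mathcal J_h e$, elementwise integration by parts leaving only $R_K=f-c\,u_h^{\mathrm{FE}}$, the sharp Poincar\'e constant $h_K/\pi$, the stability $\|(e-\mathcal J_h e)'\|_{L^2(K)}\le\|e'\|_{L^2(K)}$, the weighted fundamental-theorem-of-calculus bound from either endpoint, and assembly with $U_h^\pi=U_h^{\mathrm{FE}}=U_h^\star$, $T_h=0$. Your mean-subtraction argument for the stability step is the same orthogonality the paper obtains by showing $\int_K (e-\mathcal J_h e)'(\mathcal J_h e)'\,\mathrm{d}x=0$.
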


Asymptotic contraction criteria are presented in \Cref{appsec:linear_auxiliary_material}. They aid interpretation of the radii but are not needed for the main interval construction.

\subsection{Localized residual certificates}
\label{subsec:localized_linear_certificates}

Localization is query-dependent. In the square reduced setting, the exact discrete error is represented directly by \(r_h\), so the local/tail split is written in terms of the full compatible residual.

Fix a query point \(x_i\) and a radius parameter \(R\). Let
\[
    \Pi_{i,R}:\R^{M_h}\to\R^{M_h},
    \qquad
    \Pi_{i,R}^2=\Pi_{i,R},
\]
be any linear projector whose range is the subspace of residual rows designated as local to \(x_i\) inside radius \(R\). Define
\begin{equation}
    g_{i,h}^{\mathrm{loc}}(R):=\Pi_{i,R}^\top g_{i,h},
    \qquad
    g_{i,h}^{\mathrm{tail}}(R):=(I-\Pi_{i,R})^\top g_{i,h},
\end{equation}
and
\begin{equation}
    r_h^{\mathrm{loc}}(R):=\Pi_{i,R}r_h,
    \qquad
    r_h^{\mathrm{tail}}(R):=(I-\Pi_{i,R})r_h.
\end{equation}

\begin{proposition}[Projector-based local/tail certificate]
\label{prop:local_tail_projector_certificate}
For every query \(x_i\) and every projector \(\Pi_{i,R}\),
\begin{equation}
    \ell_{i,h}^{\top}(\Utheta-U_h^\star)
    =
    \bigl(g_{i,h}^{\mathrm{loc}}(R)\bigr)^\top r_h^{\mathrm{loc}}(R)
    +
    \bigl(g_{i,h}^{\mathrm{tail}}(R)\bigr)^\top r_h^{\mathrm{tail}}(R).
    \label{eq:exact_local_tail_identity}
\end{equation}
Consequently,
\begin{equation}
    \begin{multlined}
    \abs{\ell_{i,h}^{\top}(\Utheta-U_h^\star)}
    \le
    \abs{\bigl(g_{i,h}^{\mathrm{loc}}(R)\bigr)^\top r_h^{\mathrm{loc}}(R)}\\
    +
    \norm{g_{i,h}^{\mathrm{tail}}(R)}_{W_h^{-1}}
    \norm{r_h^{\mathrm{tail}}(R)}_{W_h}.
    \end{multlined}
    \label{eq:certified_local_green_tail_bound}
\end{equation}
A pure norm split gives
\begin{equation}
    \begin{multlined}
    \abs{\ell_{i,h}^{\top}(\Utheta-U_h^\star)}
    \le
    \norm{g_{i,h}^{\mathrm{loc}}(R)}_{W_h^{-1}}
    \norm{r_h^{\mathrm{loc}}(R)}_{W_h}\\
    +
    \norm{g_{i,h}^{\mathrm{tail}}(R)}_{W_h^{-1}}
    \norm{r_h^{\mathrm{tail}}(R)}_{W_h}.
    \end{multlined}
    \label{eq:pure_norm_local_green_tail_bound}
\end{equation}
\end{proposition}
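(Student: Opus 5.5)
The plan is to start from the adjoint Green identity of \Cref{prop:admissible_adjoint_green_identity}, which gives \(\ell_{i,h}^{\top}(\Utheta-U_h^\star)=g_{i,h}^{\top}r_h\), and then split \(r_h\) with the projector \(\Pi_{i,R}\). Write \(r_h=\Pi_{i,R}r_h+(I-\Pi_{i,R})r_h=r_h^{\mathrm{loc}}(R)+r_h^{\mathrm{tail}}(R)\). By linearity,
\[
g_{i,h}^{\top}r_h=g_{i,h}^{\top}\Pi_{i,R}r_h+g_{i,h}^{\top}(I-\Pi_{i,R})r_h .
\]
The step that needs care is recognising that the definitions deliberately use only one projector on each side, not \(\Pi^\top\) on \(g\) and \(\Pi\) on \(r\) simultaneously. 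Transposing gives \(g_{i,h}^{\top}\Pi_{i,R}r_h=(\Pi_{i,R}^{\top}g_{i,h})^{\top}r_h\), which differs from \((g^{\mathrm{loc}})^{\top}r^{\mathrm{loc}}=g^{\top}\Pi\Pi r=g^{\top}\Pi^2 r\). Idempotence \(\Pi_{i,R}^2=\Pi_{i,R}\) is what closes this gap:
\[
\bigl(g_{i,h}^{\mathrm{loc}}\bigr)^{\top}r_h^{\mathrm{loc}}=g_{i,h}^{\top}\Pi_{i,R}^{2}r_h=g_{i,h}^{\top}\Pi_{i,R}r_h .
\]
The tail term works the same way, since \((I-\Pi_{i,R})^2=I-\Pi_{i,R}\) also holds for a projector. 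Summing the two terms yields \eqref{eq:exact_local_tail_identity}. No orthogonality of \(\Pi_{i,R}\) is needed, which matters because the statement allows any linear projector.

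The inequalities then follow from the triangle inequality together with the weighted Cauchy--Schwarz inequality \(\abs{g^{\top}r}=\abs{(W_h^{-1/2}g)^{\top}(W_h^{1/2}r)}\le\norm{g}_{W_h^{-1}}\norm{r}_{W_h}\). This uses that \(W_h\) is symmetric positive definite, so its square root and inverse are well defined. For \eqref{eq:certified_local_green_tail_bound}, the local term is kept signed and Cauchy--Schwarz is applied only to the tail term. For \eqref{eq:pure_norm_local_green_tail_bound}, Cauchy--Schwarz is applied to both terms.

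The only real obstacle is the projector bookkeeping in the first step. Once idempotence is used to absorb the duplicated projector, everything else is routine.
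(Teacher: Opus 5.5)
Your proposal is correct and is essentially the paper's proof: you invoke $\ell_{i,h}^\top(\Utheta-U_h^\star)=g_{i,h}^\top r_h$, use $\Pi_{i,R}^2=\Pi_{i,R}$ and $(I-\Pi_{i,R})^2=I-\Pi_{i,R}$ to reduce the two products to $g_{i,h}^\top\Pi_{i,R}r_h$ and $g_{i,h}^\top(I-\Pi_{i,R})r_h$, and apply weighted Cauchy--Schwarz to the tail term only (mixed bound) or to both terms (pure norm split). One sentence is worded the wrong way round: the definitions \emph{do} put $\Pi_{i,R}^\top$ on $g$ and $\Pi_{i,R}$ on $r$ simultaneously, and that is exactly why idempotence is needed, but the computation you write immediately afterwards is correct.
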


Define the local and tail sensitivity masses by
\begin{equation}
    \chi_{i,h}(R):=\norm{g_{i,h}^{\mathrm{loc}}(R)}_{W_h^{-1}},
    \qquad
    \vartheta_{i,h}(R):=\norm{g_{i,h}^{\mathrm{tail}}(R)}_{W_h^{-1}}.
    \label{eq:local_tail_sensitivity_masses}
\end{equation}
The quantity \(\chi_{i,h}(R)\) is a rigorous sensitivity index for the local residual block. The product \(\vartheta_{i,h}(R)\norm{r_h^{\mathrm{tail}}(R)}_{W_h}\) is the required deterministic tail contribution. Omitting the tail term removes the deterministic certificate and leaves only a heuristic indicator.

If \(\Pi_{i,R}\) is \(W_h\)-orthogonal, for example a coordinate selector when \(W_h\) is diagonal or block diagonal by residual cell, then
\begin{equation}
    \sigma_{i,h}^2
    =
    \chi_{i,h}(R)^2+\vartheta_{i,h}(R)^2.
    \label{eq:orthogonal_local_tail_sigma_split}
\end{equation}
Indeed, the cross term vanishes because
\[
    \bigl(g_{i,h}^{\mathrm{loc}}(R)\bigr)^\top W_h^{-1} g_{i,h}^{\mathrm{tail}}(R)=0
\]
whenever \(\Pi_{i,R}^\top W_h=W_h\Pi_{i,R}\).

\begin{remark}[Deterministic localization and statistical post-processing]
\label{rem:localized_conformal_overlay}
The deterministic certificate is \eqref{eq:certified_local_green_tail_bound} together with the continuous-transfer terms. The local sensitivity mass \(\chi_{i,h}(R)\), the tail mass \(\vartheta_{i,h}(R)\), the signed local correction \(\bigl(g_{i,h}^{\mathrm{loc}}(R)\bigr)^\top r_h^{\mathrm{loc}}(R)\), and the tail product \(\vartheta_{i,h}(R)\norm{r_h^{\mathrm{tail}}(R)}_{W_h}\) are useful features for adaptivity and, when an exchangeable family of PDE instances is available, for statistical post-processing such as conformal calibration. Such conformal post-processing can provide population-level empirical coverage over the calibration distribution, but it does not replace the deterministic single-instance certificate proved here.
\end{remark}

The full-column-rank overdetermined version, where the exact discrete error depends on \(P_h^{\mathrm{eff}}r_h\), is given in \Cref{appsec:linear_rectangular_extension}.

\subsection{Randomized sensitivity estimation}
\label{subsec:randomized_sensitivity}

When the number of queries is large, exact computation of all \(\sigma_{i,h}\) may dominate cost. Random probing for diagonal and quadratic-form estimation is standard in randomized numerical linear algebra \cite{HallmanIpsenSaibaba2023Diagonal}. Here Gaussian probes are used because they give an exact \(\chi^2\) calibration for the scalar quadratic forms below. Gaussian probing estimates
\begin{equation}
    \sigma_{i,h}^2
    =
    \widehat\ell_{i,h}^{\top}\widehat H_h^{-1}\widehat\ell_{i,h}.
\end{equation}
Let \(W_h^{1/2}\) denote the symmetric positive-definite square root of \(W_h\). Let \(z^{(t)}\sim\mathcal{N}(0,I_{M_h})\) be independent, and solve
\begin{equation}
    \widehat H_h \psi^{(t)}
    =
    \widehat A_h^\top W_h^{1/2}z^{(t)},
    \qquad
    t=1,\dots,m.
    \label{eq:reduced_random_probe_solve}
\end{equation}
Then
\begin{equation}
    \Cov(\psi^{(t)})=\widehat H_h^{-1},
\end{equation}
and therefore
\begin{equation}
    \widehat\ell_{i,h}^{\top}\psi^{(t)}
    \sim
    \mathcal{N}(0,\sigma_{i,h}^2).
\end{equation}
Define
\begin{equation}
    \widehat\sigma_{i,h}^{\,2}
    :=
    \frac1m
    \sum_{t=1}^m
    \left(\widehat\ell_{i,h}^{\top}\psi^{(t)}\right)^2.
    \label{eq:reduced_random_sigma_estimator}
\end{equation}
When \(\sigma_{i,h}>0\),
\begin{equation}
    \frac{m\widehat\sigma_{i,h}^{\,2}}{\sigma_{i,h}^2}
    \sim
    \chi_m^2.
\end{equation}
The zero-sensitivity case is immediate.

Let \(q_{\alpha,m}\) be the lower \(\alpha\)-quantile of \(\chi_m^2\):
\begin{equation}
    \Prob\{\chi_m^2\le q_{\alpha,m}\}=\alpha.
\end{equation}
For \(K\) query points, define
\begin{equation}
    \widehat\sigma_{i,h}^{+}
    :=
    \left(
    \frac{m\widehat\sigma_{i,h}^{\,2}}{q_{\delta/K,m}}
    \right)^{1/2}.
    \label{eq:simultaneous_sigma_upper_bound}
\end{equation}
By the union bound,
\begin{equation}
    \Prob
    \left\{
    \sigma_{i,h}\le\widehat\sigma_{i,h}^{+}
    \ \text{for all }i=1,\dots,K
    \right\}
    \ge
    1-\delta.
    \label{eq:simultaneous_sigma_event}
\end{equation}

For sharp simultaneous intervals, we compute the exact discrete correction once by solving
\begin{equation}
    \widehat A_h e_\theta = r_h.
    \label{eq:one_primal_correction_solve}
\end{equation}
Then
\begin{equation}
    \Utheta-U_h^\star=Z_he_\theta,
\end{equation}
and
\begin{equation}
    c_{i,h}^{\mathrm{corr}}
    =
    \ell_{i,h}^{\top}(\Utheta-Z_he_\theta).
    \label{eq:randomized_corrected_center}
\end{equation}
On the event \eqref{eq:simultaneous_sigma_event}, the intervals
\begin{equation}
    \begin{multlined}
    I_i^{\mathrm{corr,rand}}
    :=\\
    \left[
    c_{i,h}^{\mathrm{corr}}
    -
    \widehat\sigma_{i,h}^{+}T_h
    -
    \varepsilon_{i,h}^{\mathrm{eval}},\right.\\
    \left.
    c_{i,h}^{\mathrm{corr}}
    +
    \widehat\sigma_{i,h}^{+}T_h
    +
    \varepsilon_{i,h}^{\mathrm{eval}}
    \right]
    \end{multlined}
    \label{eq:randomized_corrected_interval}
\end{equation}
satisfy
\begin{equation}
    \Prob
    \left\{
    \ustar(x_i)\in I_i^{\mathrm{corr,rand}}
    \ \text{for all }i=1,\dots,K
    \right\}
    \ge
    1-\delta.
\end{equation}

Without the correction solve \eqref{eq:one_primal_correction_solve}, the looser representation-centered interval is
\begin{equation}
    \begin{multlined}
    I_i^{\mathrm{norm,rand}}
    :=\\
    \left[
    \ell_{i,h}^{\top}\Utheta
    -
    \widehat\sigma_{i,h}^{+}
    \bigl(\norm{r_h}_{W_h}+T_h\bigr)
    -
    \varepsilon_{i,h}^{\mathrm{eval}},\right.\\
    \left.
    \ell_{i,h}^{\top}\Utheta
    +
    \widehat\sigma_{i,h}^{+}
    \bigl(\norm{r_h}_{W_h}+T_h\bigr)
    +
    \varepsilon_{i,h}^{\mathrm{eval}}
    \right].
    \end{multlined}
    \label{eq:randomized_norm_interval}
\end{equation}
Rademacher probes remain unbiased for \(\sigma_{i,h}^2\), but the exact \(\chi_m^2\) calibration above no longer applies; separate concentration bounds are required in that setting.

\subsection{Inexact linear algebra}
\label{subsec:inexact_linear_algebra}

Certified allowance formulas for inexact correction are given in \Cref{appsec:linear_auxiliary_material}. In the square reduced setting, primal correction and adjoint solves should use \(\widehat A_h\) and \(\widehat A_h^\top\) directly, not the normal matrix \(\widehat H_h\). The full-column-rank overdetermined extension and its matrix-free least-squares solvers are given in \Cref{appsec:linear_rectangular_extension}.

\section{Nonlinear Certification by Verified Local Roots}
\label{sec:nonlinear}

This section presents the exact-root certificate. A local discrete exact root is certified first; the resulting discrete target is then transferred to the PDE solution. The stationary-target variant is given in \Cref{appsec:nonlinear_stationary_variant}, and one explicit verified-ball construction for square systems is given in \Cref{appsec:linear_auxiliary_material}.

All nonlinear statements are written after admissible constraints have been eliminated. Fix a norm \(\norm{\cdot}_{X_h}\) on \(\R^{n_h}\); all local radii and operator norms below are measured with respect to \(\norm{\cdot}_{X_h}\) on the state space, \(\norm{\cdot}_{W_h}\) on the residual space \(\R^{M_h}\), and the dual norm \(\norm{\cdot}_{X_h^\ast}\) on \((\R^{n_h})^\ast\). The reduced unknown is \(y\in\R^{n_h}\), the corresponding admissible full vector is
\begin{equation}
    U(y):=U_{0,h}+Z_h y,
\end{equation}
and the query map at \(x_i\) is
\begin{equation}
    L_{i,h}(y):=\ell_{i,h}^{\top}U(y)
    =
    \ell_{i,h}^{\top}U_{0,h}+\widehat\ell_{i,h}^{\top}y,
    \qquad
    \widehat\ell_{i,h}:=Z_h^\top \ell_{i,h}.
    \label{eq:nonlinear_reduced_query_map}
\end{equation}
This is the reduced-coordinate form of the compatible query \(\Lambda_{i,h}\).
Let
\begin{equation}
    F_h:\R^{n_h}\to\R^{M_h}
\end{equation}
be continuously differentiable on a neighborhood of the relevant points, and let the trained PINN induce \(y_\theta\in\R^{n_h}\) with residual
\begin{equation}
    r_h:=F_h(y_\theta).
\end{equation}

\subsection{Certified exact-root targets}
\label{subsec:nonlinear_exact_root}

A discrete exact-root target is a vector \(y_h^\star\) satisfying
\begin{equation}
    F_h(y_h^\star)=0.
\end{equation}
When \(M_h>n_h\), full column rank of \(DF_h(y)\) does not certify existence of such a root; exact-root certificates require a separately verified local zero.

For any base point \(y\) at which
\begin{equation}
    J_h(y):=DF_h(y)
\end{equation}
has full column rank, define
\begin{equation}
    H_h^{\mathrm{root}}(y):=J_h(y)^\top W_h J_h(y).
\end{equation}
For a reduced functional \(\widehat\ell_h\in\R^{n_h}\), let \(q_{\ell,h}^{\mathrm{root}}(y)\) solve
\begin{equation}
    H_h^{\mathrm{root}}(y)q_{\ell,h}^{\mathrm{root}}(y)=\widehat\ell_h,
\end{equation}
and set
\begin{equation}
    g_{\ell,h}^{\mathrm{root}}(y):=W_hJ_h(y)q_{\ell,h}^{\mathrm{root}}(y),
    \qquad
    \sigma_{\ell,h}^{\mathrm{root}}(y):=\norm{g_{\ell,h}^{\mathrm{root}}(y)}_{W_h^{-1}}.
\end{equation}
Equivalently,
\begin{equation}
    \sigma_{\ell,h}^{\mathrm{root}}(y)
    =
    \left(
    \widehat\ell_h^\top H_h^{\mathrm{root}}(y)^{-1}\widehat\ell_h
    \right)^{1/2}.
\end{equation}
For a deviation \(d\in\R^{n_h}\), define the base-point Taylor remainder
\begin{equation}
    R_h^{\mathrm{root}}(y;d)
    :=
    F_h(y-d)-F_h(y)+J_h(y)d.
    \label{eq:base_point_root_remainder}
\end{equation}
If \(DF_h\) is Lipschitz on the ball \(B_{X_h}(y,t)\) with constant \(L_h(y,t)\) in the operator norm from \((\R^{n_h},\norm{\cdot}_{X_h})\) to \((\R^{M_h},\norm{\cdot}_{W_h})\), then
\begin{equation}
    \norm{R_h^{\mathrm{root}}(y;d)}_{W_h}
    \le
    \frac{L_h(y,t)}{2}\norm{d}_{X_h}^2,
    \qquad
    \norm{d}_{X_h}\le t,
    \label{eq:root_remainder_lipschitz_bound}
\end{equation}
because
\begin{equation}
    R_h^{\mathrm{root}}(y;d)
    =
    \int_0^1 \bigl(J_h(y)-J_h(y-sd)\bigr)d\,\dd s.
\end{equation}

\begin{proposition}[Local exact-root correction at a base point]
\label{prop:nonlinear_root_base_point}
Let \(y\) be a base point such that \(J_h(y)\) has full column rank. Assume the exact-root target \(y_h^\star\) satisfies
\begin{equation}
    \norm{y-y_h^\star}_{X_h}\le t_h^{\mathrm{root}}(y),
\end{equation}
and that a certified remainder bound
\begin{equation}
    \mathcal{R}_h^{\mathrm{root}}(y;t_h^{\mathrm{root}}(y))
    \ge
    \norm{R_h^{\mathrm{root}}(y;y-y_h^\star)}_{W_h}
    \label{eq:certified_root_remainder_base}
\end{equation}
is available. Then, for every reduced functional \(\widehat\ell_h\in\R^{n_h}\),
\begin{equation}
    \abs{
    \widehat\ell_h^\top (y-y_h^\star)
    -
    \bigl(g_{\ell,h}^{\mathrm{root}}(y)\bigr)^\top F_h(y)
    }
    \le
    \sigma_{\ell,h}^{\mathrm{root}}(y)\,
    \mathcal{R}_h^{\mathrm{root}}(y;t_h^{\mathrm{root}}(y)).
    \label{eq:nonlinear_root_base_point_certificate}
\end{equation}
In particular, for query \(x_i\),
\begin{equation}
    \abs{
    L_{i,h}(y)-L_{i,h}(y_h^\star)
    -
    \bigl(g_{i,h}^{\mathrm{root}}(y)\bigr)^\top F_h(y)
    }
    \le
    \sigma_{i,h}^{\mathrm{root}}(y)\,
    \mathcal{R}_h^{\mathrm{root}}(y;t_h^{\mathrm{root}}(y)),
    \label{eq:local_root_query_certificate}
\end{equation}
where \(g_{i,h}^{\mathrm{root}}(y)\) and \(\sigma_{i,h}^{\mathrm{root}}(y)\) correspond to \(\widehat\ell_{i,h}\).
\end{proposition}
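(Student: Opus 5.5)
The proof is a one-line algebraic identity followed by a weighted Cauchy--Schwarz bound, exactly as in the linear case (\Cref{prop:admissible_adjoint_green_identity}). Set \(d:=y-y_h^\star\). Because \(F_h(y_h^\star)=0\), the remainder definition \eqref{eq:base_point_root_remainder} with this \(d\) gives
\[
    R_h^{\mathrm{root}}(y;d)=F_h(y_h^\star)-F_h(y)+J_h(y)d=-F_h(y)+J_h(y)d .
\]
So the linearized residual satisfies the exact relation
\[
    J_h(y)d=F_h(y)+R_h^{\mathrm{root}}(y;d).
\]
This plays the role that \(\widehat A_h(y_\theta-y_h^\star)=r_h\) plays in the linear setting. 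The difference is that the relation is now perturbed by the remainder.

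Next I would write \(\widehat\ell_h^\top d\) through the Green vector. By definition \(H_h^{\mathrm{root}}(y)q=\widehat\ell_h\) with \(q=q_{\ell,h}^{\mathrm{root}}(y)\). This system is solvable because full column rank of \(J_h(y)\) and \(W_h\succ0\) make \(H_h^{\mathrm{root}}(y)\) symmetric positive definite. Then
\[
    \widehat\ell_h^\top d=q^\top J_h(y)^\top W_h J_h(y)d=\bigl(g_{\ell,h}^{\mathrm{root}}(y)\bigr)^\top J_h(y)d .
\]
Substituting the relation above gives the exact identity
\[
    \widehat\ell_h^\top d-\bigl(g_{\ell,h}^{\mathrm{root}}(y)\bigr)^\top F_h(y)=\bigl(g_{\ell,h}^{\mathrm{root}}(y)\bigr)^\top R_h^{\mathrm{root}}(y;d).
\]
Here the symmetry of \(W_h\) is what allows rewriting \(q^\top J^\top W_h\) as \(g^\top\).

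To bound the right-hand side, I would apply Cauchy--Schwarz in the dual pairing \(|g^\top s|=|(W_h^{-1/2}g)^\top(W_h^{1/2}s)|\le\norm{g}_{W_h^{-1}}\norm{s}_{W_h}\). This gives the factor \(\sigma_{\ell,h}^{\mathrm{root}}(y)\norm{R_h^{\mathrm{root}}(y;d)}_{W_h}\). The hypothesis \eqref{eq:certified_root_remainder_base} then bounds \(\norm{R_h^{\mathrm{root}}(y;d)}_{W_h}\) by \(\mathcal R_h^{\mathrm{root}}(y;t_h^{\mathrm{root}}(y))\), which is \eqref{eq:nonlinear_root_base_point_certificate}. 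The stated equivalent formula for \(\sigma^{\mathrm{root}}\) follows from
\[
    \norm{g}_{W_h^{-1}}^2=q^\top J^\top W_h W_h^{-1}W_h Jq=q^\top H q=\widehat\ell_h^\top H^{-1}\widehat\ell_h .
\]
The ball assumption \(\norm{y-y_h^\star}_{X_h}\le t_h^{\mathrm{root}}(y)\) is only needed to justify the certified remainder bound, for instance via \eqref{eq:root_remainder_lipschitz_bound}; it does not enter the algebra itself.

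Finally, the query statement \eqref{eq:local_root_query_certificate} follows from \eqref{eq:nonlinear_reduced_query_map}, since \(L_{i,h}(y)-L_{i,h}(y_h^\star)=\widehat\ell_{i,h}^\top(y-y_h^\star)\), by taking \(\widehat\ell_h=\widehat\ell_{i,h}\). There is no genuine obstacle here. The only point requiring care is the sign convention in the remainder, \(F_h(y-d)\) with \(d=y-y_h^\star\), so that the root appears as \(F_h(y_h^\star)\).
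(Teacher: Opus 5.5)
Your proposal is correct and follows the paper's proof essentially step for step. Both arguments derive $J_h(y)d=F_h(y)+R_h^{\mathrm{root}}(y;d)$ from $F_h(y_h^\star)=0$, rewrite $\widehat\ell_h^\top d$ as $\bigl(g_{\ell,h}^{\mathrm{root}}(y)\bigr)^\top J_h(y)d$ through the adjoint equation, and close with Cauchy--Schwarz in the $W_h$-duality pairing together with the certified remainder bound; your extra remarks (positive definiteness of $H_h^{\mathrm{root}}(y)$, and the ball hypothesis entering only through $\mathcal{R}_h^{\mathrm{root}}$) are accurate.
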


Such neighborhoods can be certified by standard Newton--Kantorovich verification arguments \cite{OrtegaRheinboldt2000,Deuflhard2011Newton}. For square systems, \Cref{appsec:linear_auxiliary_material} records the corresponding verified ball in the norms used here.

\subsection{Discrete-to-continuous comparison for exact-root targets}
\label{subsec:nonlinear_root_transfer}

\begin{proposition}[Exact-root discrete-to-continuous comparison]
\label{prop:nonlinear_root_discrete_to_continuous}
Let \(y_h^\pi\in\R^{n_h}\) be an admissible comparison vector, set
\begin{equation}
    U_h^\pi:=U_{0,h}+Z_h y_h^\pi,
    \qquad
    \tau_h^\pi:=F_h(y_h^\pi),
\end{equation}
and assume
\begin{equation}
    \varepsilon_{i,h}^{\mathrm{eval}}
    \ge
    \abs{L_{i,h}(y_h^\pi)-\ustar(x_i)}.
    \label{eq:nonlinear_root_eval_comparison}
\end{equation}
Assume also that \(J_h(y_h^\pi)\) has full column rank and that the same exact-root target \(y_h^\star\) satisfies
\begin{equation}
    \norm{y_h^\pi-y_h^\star}_{X_h}\le t_h^{\mathrm{root}}(y_h^\pi),
\end{equation}
together with a certified remainder bound
\begin{equation}
    \mathcal{R}_h^{\mathrm{root}}(y_h^\pi;t_h^{\mathrm{root}}(y_h^\pi))
    \ge
    \norm{R_h^{\mathrm{root}}(y_h^\pi;y_h^\pi-y_h^\star)}_{W_h}.
\end{equation}
Then
\begin{equation}
    \abs{L_{i,h}(y_h^\star)-\ustar(x_i)}
    \le
    \mu_{i,h}^{\mathrm{root,disc}},
\end{equation}
where
\begin{equation}
    \mu_{i,h}^{\mathrm{root,disc}}
    :=
    \abs{
    \bigl(g_{i,h}^{\mathrm{root}}(y_h^\pi)\bigr)^\top \tau_h^\pi
    }
    +
    \sigma_{i,h}^{\mathrm{root}}(y_h^\pi)\,
    \mathcal{R}_h^{\mathrm{root}}(y_h^\pi;t_h^{\mathrm{root}}(y_h^\pi))
    +
    \varepsilon_{i,h}^{\mathrm{eval}}.
    \label{eq:nonlinear_root_disc_bias}
\end{equation}
If only a norm bound \(T_h^\pi\ge \norm{\tau_h^\pi}_{W_h}\) is available, then
\begin{equation}
    \mu_{i,h}^{\mathrm{root,disc}}
    \le
    \sigma_{i,h}^{\mathrm{root}}(y_h^\pi)
    \bigl(
    T_h^\pi
    +
    \mathcal{R}_h^{\mathrm{root}}(y_h^\pi;t_h^{\mathrm{root}}(y_h^\pi))
    \bigr)
    +
    \varepsilon_{i,h}^{\mathrm{eval}}.
\end{equation}
\end{proposition}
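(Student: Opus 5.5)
The plan is to reduce the statement to \Cref{prop:nonlinear_root_base_point} applied at the comparison point \(y_h^\pi\) in place of the trained vector, followed by a triangle inequality with the evaluation comparison. Concretely, I would split
\[
    L_{i,h}(y_h^\star)-\ustar(x_i)
    =
    \bigl(L_{i,h}(y_h^\star)-L_{i,h}(y_h^\pi)\bigr)
    +
    \bigl(L_{i,h}(y_h^\pi)-\ustar(x_i)\bigr).
\]
The second bracket is bounded by \(\varepsilon_{i,h}^{\mathrm{eval}}\) through \eqref{eq:nonlinear_root_eval_comparison}, so all the work is in the first bracket.

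For the first bracket I would invoke \eqref{eq:local_root_query_certificate} with base point \(y=y_h^\pi\). The hypotheses of \Cref{prop:nonlinear_root_base_point} are exactly those assumed here: full column rank of \(J_h(y_h^\pi)\), the radius condition \(\norm{y_h^\pi-y_h^\star}_{X_h}\le t_h^{\mathrm{root}}(y_h^\pi)\), and the certified remainder bound. This gives
\[
    \abs{L_{i,h}(y_h^\pi)-L_{i,h}(y_h^\star)-\bigl(g_{i,h}^{\mathrm{root}}(y_h^\pi)\bigr)^\top\tau_h^\pi}
    \le
    \sigma_{i,h}^{\mathrm{root}}(y_h^\pi)\,\mathcal R_h^{\mathrm{root}}(y_h^\pi;t_h^{\mathrm{root}}(y_h^\pi)),
\]
where \(F_h(y_h^\pi)=\tau_h^\pi\). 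One more triangle inequality bounds \(\abs{L_{i,h}(y_h^\star)-L_{i,h}(y_h^\pi)}\) by \(\abs{(g_{i,h}^{\mathrm{root}}(y_h^\pi))^\top\tau_h^\pi}\) plus the remainder term. Combining this with the evaluation bound yields \(\mu_{i,h}^{\mathrm{root,disc}}\) as defined in \eqref{eq:nonlinear_root_disc_bias}.

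For the norm-only corollary, I would use the Cauchy--Schwarz inequality in the dual pairing between \(W_h^{-1}\) and \(W_h\):
\[
    \abs{g^\top\tau}=\abs{(W_h^{-1/2}g)^\top(W_h^{1/2}\tau)}\le \norm{g}_{W_h^{-1}}\norm{\tau}_{W_h}.
\]
With \(g=g_{i,h}^{\mathrm{root}}(y_h^\pi)\) this is \(\sigma_{i,h}^{\mathrm{root}}(y_h^\pi)\norm{\tau_h^\pi}_{W_h}\le\sigma_{i,h}^{\mathrm{root}}(y_h^\pi)T_h^\pi\), and factoring out \(\sigma_{i,h}^{\mathrm{root}}(y_h^\pi)\) gives the stated inequality.

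I expect no step to be a genuine obstacle. The only point needing care is sign bookkeeping: \Cref{prop:nonlinear_root_base_point} controls \(L(y)-L(y_h^\star)\), while the target is \(L(y_h^\star)-L(y_h^\pi)\). Since only absolute values enter the final bound, the sign flip is harmless.
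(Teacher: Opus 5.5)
Your proposal is correct and follows the paper's proof essentially step for step. You apply \Cref{prop:nonlinear_root_base_point} at the base point \(y=y_h^\pi\), combine it by the triangle inequality with the evaluation bound \(\varepsilon_{i,h}^{\mathrm{eval}}\), and obtain the norm-only variant from the weighted Cauchy--Schwarz inequality \(\abs{g^\top\tau}\le\norm{g}_{W_h^{-1}}\norm{\tau}_{W_h}\).
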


\subsection{Nonlinear exact-root certificate}
\label{subsec:nonlinear_root_interval}

\begin{theorem}[Nonlinear exact-root pointwise certificate]
\label{thm:nonlinear_root_interval}
Assume the hypotheses of \Cref{prop:nonlinear_root_base_point} at the computable base point \(y_\theta\) and of \Cref{prop:nonlinear_root_discrete_to_continuous} at the comparison point \(y_h^\pi\), with the same exact-root target \(y_h^\star\). Define
\begin{equation}
    c_{i,h}^{\mathrm{root,corr}}
    :=
    L_{i,h}(y_\theta)
    -
    \bigl(g_{i,h}^{\mathrm{root}}(y_\theta)\bigr)^\top r_h.
    \label{eq:nonlinear_root_corrected_center}
\end{equation}
Then
\begin{equation}
    \begin{multlined}
    \abs{c_{i,h}^{\mathrm{root,corr}}-\ustar(x_i)}
    \le
    \sigma_{i,h}^{\mathrm{root}}(y_\theta)\,
    \mathcal{R}_h^{\mathrm{root}}(y_\theta;t_h^{\mathrm{root}}(y_\theta))\\
    +
    \mu_{i,h}^{\mathrm{root,disc}}.
    \end{multlined}
    \label{eq:nonlinear_root_interval_bound}
\end{equation}
Hence
\begin{equation}
    \ustar(x_i)\in
    \left[
    c_{i,h}^{\mathrm{root,corr}}-\rho_{i,h}^{\mathrm{root}},
    \;
    c_{i,h}^{\mathrm{root,corr}}+\rho_{i,h}^{\mathrm{root}}
    \right],
\end{equation}
where
\begin{equation}
    \begin{multlined}
    \rho_{i,h}^{\mathrm{root}}
    :=\\
    \sigma_{i,h}^{\mathrm{root}}(y_\theta)\,
    \mathcal{R}_h^{\mathrm{root}}(y_\theta;t_h^{\mathrm{root}}(y_\theta))\\
    +
    \mu_{i,h}^{\mathrm{root,disc}}.
    \end{multlined}
\end{equation}
Moreover,
\begin{equation}
    \begin{multlined}
    \abs{L_{i,h}(y_\theta)-\ustar(x_i)}
    \le
    \sigma_{i,h}^{\mathrm{root}}(y_\theta)
    \Bigl(
    \norm{r_h}_{W_h}
    +
    \mathcal{R}_h^{\mathrm{root}}(y_\theta;t_h^{\mathrm{root}}(y_\theta))
    \Bigr)\\
    +
    \mu_{i,h}^{\mathrm{root,disc}},
    \end{multlined}
    \label{eq:nonlinear_root_norm_only}
\end{equation}
and therefore
\begin{equation}
    \begin{multlined}
    \abs{u_\theta(x_i)-\ustar(x_i)}
    \le
    \zeta_{i,h}^{\theta}
    +
    \sigma_{i,h}^{\mathrm{root}}(y_\theta)
    \Bigl(
    \norm{r_h}_{W_h}
    +
    \mathcal{R}_h^{\mathrm{root}}(y_\theta;t_h^{\mathrm{root}}(y_\theta))
    \Bigr)\\
    +
    \mu_{i,h}^{\mathrm{root,disc}}.
    \end{multlined}
\end{equation}
\end{theorem}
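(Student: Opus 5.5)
The proof is a triangle inequality that chains \Cref{prop:nonlinear_root_base_point} at the base point \(y_\theta\) with \Cref{prop:nonlinear_root_discrete_to_continuous} at the comparison point \(y_h^\pi\). Both propositions refer to the same exact-root target \(y_h^\star\), so the intermediate quantity is \(L_{i,h}(y_h^\star)\). The first step is to write
\[
    c_{i,h}^{\mathrm{root,corr}}-\ustar(x_i)
    =
    \Bigl(L_{i,h}(y_\theta)-L_{i,h}(y_h^\star)-\bigl(g_{i,h}^{\mathrm{root}}(y_\theta)\bigr)^\top r_h\Bigr)
    +
    \bigl(L_{i,h}(y_h^\star)-\ustar(x_i)\bigr),
\]
using \(r_h=F_h(y_\theta)\) and the definition \eqref{eq:nonlinear_root_corrected_center}. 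The first bracket is bounded by \eqref{eq:local_root_query_certificate} applied with \(y=y_\theta\). The second bracket is bounded by \(\mu_{i,h}^{\mathrm{root,disc}}\) from \Cref{prop:nonlinear_root_discrete_to_continuous}. Adding the two bounds gives \eqref{eq:nonlinear_root_interval_bound}, and the interval statement is just a restatement of it with radius \(\rho_{i,h}^{\mathrm{root}}\).

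Next comes the norm-only bound \eqref{eq:nonlinear_root_norm_only}. I write
\[
    L_{i,h}(y_\theta)-\ustar(x_i)
    =
    \bigl(c_{i,h}^{\mathrm{root,corr}}-\ustar(x_i)\bigr)
    +
    \bigl(g_{i,h}^{\mathrm{root}}(y_\theta)\bigr)^\top r_h .
\]
The added term is controlled by the weighted Cauchy--Schwarz inequality
\(\abs{g^\top r}\le\norm{g}_{W_h^{-1}}\norm{r}_{W_h}\), which holds since \(g^\top r=(W_h^{-1/2}g)^\top(W_h^{1/2}r)\). With the definition of \(\sigma_{i,h}^{\mathrm{root}}(y_\theta)\), this gives \(\abs{(g_{i,h}^{\mathrm{root}}(y_\theta))^\top r_h}\le\sigma_{i,h}^{\mathrm{root}}(y_\theta)\norm{r_h}_{W_h}\). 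Combining with the first bound and factoring out \(\sigma_{i,h}^{\mathrm{root}}(y_\theta)\) yields \eqref{eq:nonlinear_root_norm_only}.

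For the final statement, I use \(L_{i,h}(y_\theta)=\ell_{i,h}^\top U(y_\theta)=\ell_{i,h}^\top\Utheta\), so \(\zeta_{i,h}^{\theta}=\abs{u_\theta(x_i)-L_{i,h}(y_\theta)}\) by \eqref{eq:representation_mismatch}. One more triangle inequality then gives the raw-output bound.

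Every step is an identity or a triangle inequality, so no step is a real obstacle. The two points to check are bookkeeping. The first is that both propositions are invoked with the same root \(y_h^\star\), which the hypothesis grants explicitly; otherwise the intermediate term \(L_{i,h}(y_h^\star)\) would not cancel. The second is that the sign convention in \eqref{eq:local_root_query_certificate} matches the minus sign in the corrected center. Both hold as stated.
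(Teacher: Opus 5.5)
Your proposal is correct and follows the paper's proof. You chain the base-point certificate at \(y_\theta\) with the exact-root comparison proposition through the common intermediate value \(L_{i,h}(y_h^\star)\), use weighted Cauchy--Schwarz for the norm-only bound, and finish with one more triangle inequality involving \(\zeta_{i,h}^{\theta}\).
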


\begin{remark}[Stationary-target variant]
\label{rem:nonlinear_ls_appendix_bridge}
When a verified discrete exact root is unavailable but a verified local stationary point of the nonlinear least-squares functional can be certified, the same two-stage argument can be applied to the stationarity map
\[
    S_h(y)=DF_h(y)^\top W_hF_h(y).
\]
The stationary-target construction, including the local correction identity and the discrete-to-continuous comparison theorem, is given in \Cref{appsec:nonlinear_stationary_variant}.
\end{remark}

\section{Computational Procedure and Reporting for \texorpdfstring{\(P_1\)}{P1} Numerical PINNs}
\label{sec:computational_procedure}

For the linear compatible \(P_1\) case, the certificate can be assembled as follows.

The following procedure applies to the square reduced setting.

\begin{enumerate}[label=\arabic*.,leftmargin=1.8em]
    \item \textbf{Assemble the compatible system.}
    Build the mesh \(\Th\), the admissible set \(\Uhadm=U_{0,h}+Z_h\R^{n_h}\), the reconstruction \(\Ih\), and the compatible residual \(F_h(U)=A_hU-b_h\). Form
    \[
        \widehat A_h=A_hZ_h,
        \qquad
        \widehat b_h=b_h-A_hU_{0,h},
    \]
    and form \(\widehat H_h=\widehat A_h^\top W_h\widehat A_h\) only when it is specifically needed, for example in the randomized sensitivity procedure.

    \item \textbf{Represent the trained model by an admissible coefficient vector.}
    Obtain \(\Utheta=U_{0,h}+Z_hy_\theta\in\Uhadm\), either directly or through an explicit extraction map \(\Pi_h\). Compute the compatible residual
    \[
        r_h=A_h\Utheta-b_h.
    \]

    \item \textbf{Assemble the query vectors once.}
    For point queries, use \(\ell_h(x)=\bigl(\phi_1(x),\dots,\phi_{N_h}(x)\bigr)^\top\). Nodal values are the special case \(\ell_h(a_k)=e_k\). Off-grid point values use barycentric weights on the containing element. Any other scalar linear functional is treated by assembling its coefficient vector \(\ell_{i,h}\) in the same basis.

    \item \textbf{Choose the discrete correction regime.}
    Use the direct square-system formulas from \Cref{sec:linear_main}. Primal correction and adjoint solves should use \(\widehat A_h\) and \(\widehat A_h^\top\) directly; explicit formation of \(\widehat H_h\) is unnecessary except when the randomized sensitivity procedure is used. The full-column-rank overdetermined extension is given in \Cref{appsec:linear_rectangular_extension}.

    \item \textbf{Supply the transfer layer.}
    Choose an admissible comparison vector \(U_h^\pi\in\Uhadm\) and rigorous bounds \(T_h\) and \(\varepsilon_{i,h}^{\mathrm{eval}}\) as in \Cref{subsec:linear_transfer}. For coercive conforming Galerkin problems, use the specialization in \Cref{subsec:coercive_galerkin_specialization}. In the fully computable one-dimensional reaction--diffusion specialization of \Cref{thm:1d_fully_computable_transfer}, take \(U_h^\pi=U_h^\star=U_h^{\mathrm{FE}}\), so \(T_h=0\) and \(\varepsilon_{i,h}^{\mathrm{eval}}\) is given explicitly by \eqref{eq:1d_computable_transfer_terms}.

    \item \textbf{Report the interval.}
    Use the corrected interval from \Cref{thm:linear_deterministic_interval} when exact correction is available, the randomized intervals from \Cref{subsec:randomized_sensitivity} when many queries are present, the localized form \eqref{eq:certified_local_green_tail_bound} when locality is exploited, or the norm-only radius \eqref{eq:norm_radius_decomposition} when only residual-norm information is desired. Certified inexact-solve allowances are given in \Cref{appsec:linear_auxiliary_material}. Add \(\zeta_{i,h}^{\theta}\) only when the reported center is an unprojected neural output rather than the certified reconstruction.
\end{enumerate}


\section{Experiments}
\label{sec:experiments}

The experiments are organized by claim rather than by benchmark. The objective is to evaluate certification mechanisms rather than prediction error in isolation. Manufactured tests provide direct validation for nodal and off-grid point queries and isolate the refinement behavior of point queries in three dimensions. The SimJEB experiments address whether the compatible-query algebra remains interpretable and computable in a large three-dimensional elasticity benchmark, both for compatible linear states and for raw neural predictions after projection into the admissible space. For that reason, this section reports the SimJEB query geometry, adjoint Green structure, corrected centers, released-field comparison identity, projected-neural raw-output intervals, and randomized sensitivity results; secondary SimJEB diagnostics such as raw-versus-corrected fields, localization sweeps, inexact correction, and additional load-family studies are given in \Cref{app:additional_experiments}. Details on the main claims tested by each experiment are gathered in \Cref{tab:experiment_map}.

All linear experiments in the square-system formulation use square reduced systems after admissible reduction. The full-column-rank overdetermined extension is illustrated in \Cref{appsubsec:exp-rectangular-extension}.

Unless stated otherwise, all reported linear intervals use the compatible residual \(r_h=A_h\Utheta-b_h\), the corresponding query vectors \(\ell_{i,h}\), and the transfer inputs from \Cref{subsec:linear_transfer}. In the SimJEB released-field experiment, the released nodal displacement field is not treated as a certified continuum reference solution. It is used only as an external admissible comparison vector \(U_h^\pi\) on the same mesh, so that the discrete comparison identity from \Cref{prop:admissible_discretization_bias} can be checked directly against a public benchmark field.

\begin{table*}[t]
    \centering
    \caption{Experiment map. Each block corresponds to one theorem-level or implementation claim.}
    \begin{adjustbox}{max width=\textwidth}
    \begin{tabular}{@{}p{0.19\textwidth}p{0.18\textwidth}p{0.19\textwidth}p{0.19\textwidth}p{0.21\textwidth}@{}}
        \toprule
        Claim tested & Theory reference & Benchmark & Metric & Main comparison \\
        \midrule
        Finite collocation does not control unsampled point values & \Cref{prop:finite_collocation_no_point_control} & synthetic hidden-bump test & hidden-point error, certified radii & sampled loss versus hidden-point change \\
        Linear correction calibrates the compatible discrete target & \Cref{thm:linear_deterministic_interval,prop:exact_correction_recovers_discrete_target} & manufactured linear benchmark & querywise error, corrected-center error, width decomposition & uncorrected center, corrected center, compatible discrete target \\
        Point queries in three dimensions & \Cref{prop:admissible_discretization_bias,cor:coercive_galerkin_certificate_split} & manufactured 3D refinement study & sensitivity growth, pointwise bias, signed transfer term, and norm route & nodal point queries versus fixed-radius averages \\
        Large-scale compatible-query correction, sensitivity structure, and transfer & \Cref{prop:admissible_adjoint_green_identity,prop:admissible_discretization_bias} & SimJEB bracket 148 & patch-average displacement error, signed correction, sensitivity concentration, and released-field bias & query geometry, adjoint Green maps, signed correction, and norm comparison bound \\
        Raw-output intervals after admissibility projection & \Cref{thm:linear_deterministic_interval,prop:exact_correction_recovers_discrete_target} & SimJEB bracket 148 load family & coverage, raw error versus raw radius, representation mismatch, and runtime & raw output, corrected center, compatible discrete target, and fresh CG solve \\
        Localization requires an explicit tail term & \Cref{prop:local_tail_projector_certificate} & dumbbell-domain benchmark & local term, tail term, certified local+tail quantity & actual query error across radius sweeps \\
        Randomized sensitivities preserve simultaneous coverage & \Cref{subsec:randomized_sensitivity} & SimJEB bracket and synthetic appendix tests & all-query coverage, inflation, runtime & exact sensitivities from adjoint solves \\
        Nonlinear exact-root certificates produce non-vacuous intervals & \Cref{thm:nonlinear_root_interval} & manufactured nonlinear benchmark & corrected error, norm-only error, remainder split & reported radii versus exact errors \\
        \bottomrule
    \end{tabular}
    \end{adjustbox}
    \label{tab:experiment_map}
\end{table*}

\subsection{Necessity of compatibility}
\label{subsec:exp-compatibility}

\Cref{fig:compatibility-failure} illustrates a failure mode excluded by the theory, and discussed in \Cref{prop:finite_collocation_no_point_control}. A smooth bump is inserted between collocation points while the field remains unchanged at every sampled location. The sampled loss is therefore unchanged, but the hidden query value changes substantially. A compatible certificate reflects this change because the certified object is a mesh-based numerical field whose reported value and residual are defined on the same degrees of freedom. The behavior is consistent with \Cref{prop:finite_collocation_no_point_control}: the sampled loss remains unchanged, while the hidden-point error increases and the compatible mesh and unprojected-output radii reflect the hidden error.

\begin{figure}[t]
    \centering
    \fitfigure{compatibility_failure}
    \caption{Finite collocation does not certify an unsampled point value. Left: a hidden bump inserted between collocation points leaves the sampled values unchanged at the collocation set but changes the hidden query value. Right: the sampled loss remains unchanged, while the hidden-point error grows and the compatible certified radii reflect this change. This illustrates the obstruction formalized in \Cref{prop:finite_collocation_no_point_control}.}
    \label{fig:compatibility-failure}
\end{figure}

\subsection{Linear calibration on a manufactured benchmark}
\label{subsec:exp-linear-calibration}

The first controlled linear test evaluates the identities from \Cref{sec:linear_main} and the radius decomposition from \Cref{subsec:width_decomposition}. The benchmark includes both nodal and off-grid queries and is evaluated over a refinement sequence. \Cref{fig:linear-calibration} shows three effects. First, both the reconstruction-centered and corrected-center certificates contract under refinement and remain above the corresponding errors. Second, the corrected center agrees with the compatible discrete target to the resolution shown, as predicted by \Cref{prop:exact_correction_recovers_discrete_target}. Third, the median width decomposition separates the signed discrete correction, the transfer/comparison term, and the off-grid evaluation term instead of aggregating them into a non-decomposed interval width. The full-column-rank overdetermined extension, including the effective-residual refinement, is given in \Cref{appsubsec:exp-rectangular-extension}. Together, these results are consistent with calibration to the chosen numerical system rather than only empirical association with the observed error.

Certified early stopping for primal and adjoint solves for this benchmark is reported separately in \Cref{fig:inexact-linear}.

\begin{figure}[t]
    \centering
    \adjustimage{width=\columnwidth,clip,trim=0 {.5\height} 0 0}{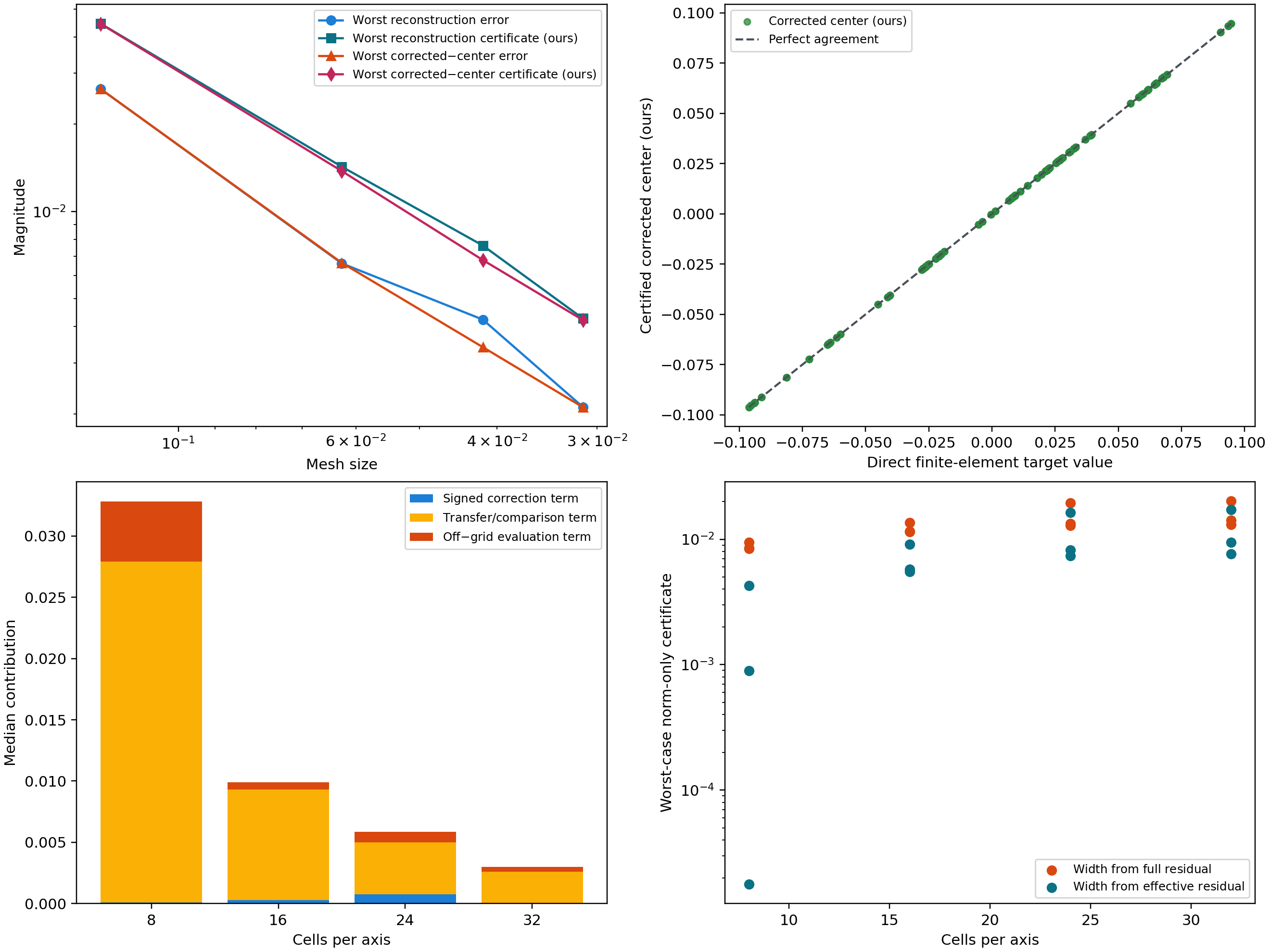}

    \vspace{0.6em}

    \adjustimage{width=0.495\columnwidth,clip,trim=0 0 {.5\width} {.5\height}}{linear_calibration.png}
    \caption{Linear calibration on a manufactured benchmark. Top left: worst off-grid reconstruction errors and their certificates under refinement. Top right: the corrected center agrees with the compatible discrete target. Bottom: median radius decomposition into the signed correction, transfer/comparison term, and off-grid evaluation term.}
    \label{fig:linear-calibration}
\end{figure}

\subsection{Point queries in three dimensions}
\label{subsec:exp-3d-pointwise}

A manufactured three-dimensional refinement study was used to separate the point-query behavior of the signed transfer term from that of the norm route in \Cref{subsec:coercive_galerkin_specialization}. The reported quantities are interior nodal point queries and fixed-radius averages of the same scalar displacement component. For the point queries, the comparison state is chosen so that the evaluation term vanishes at the queried nodes, leaving the signed term \(\abs{g_{i,h}^{\top}\tau_h^\pi}\) and the norm route \(\sigma_{i,h}T_h\) as the relevant transfer quantities.

\Cref{fig:3d-pointwise} shows three effects. Panel A reports the maximum sensitivity over the query set. The point-query sensitivity grows under refinement, with observed slope \(-0.55\), while the fixed-radius averages remain nearly flat over the same meshes. Panel B compares the maximum pointwise bias \(E_{i,h}^{\mathrm{pt}}\) with the maximum signed term \(\abs{g_{i,h}^{\top}\tau_h^\pi}\); the largest discrepancy over the refinement sequence is \(7.16\times 10^{-14}\). Panel C compares the same pointwise bias with the maximum norm route \(\sigma_{i,h}T_h\). The norm route remains valid but wider, with ratio \(17.1\) on the finest mesh. This is the point-query behavior that motivates the use of fixed-radius averages in the large-scale three-dimensional study below.

\begin{figure}[t]
    \centering
    \includegraphics[width=\columnwidth]{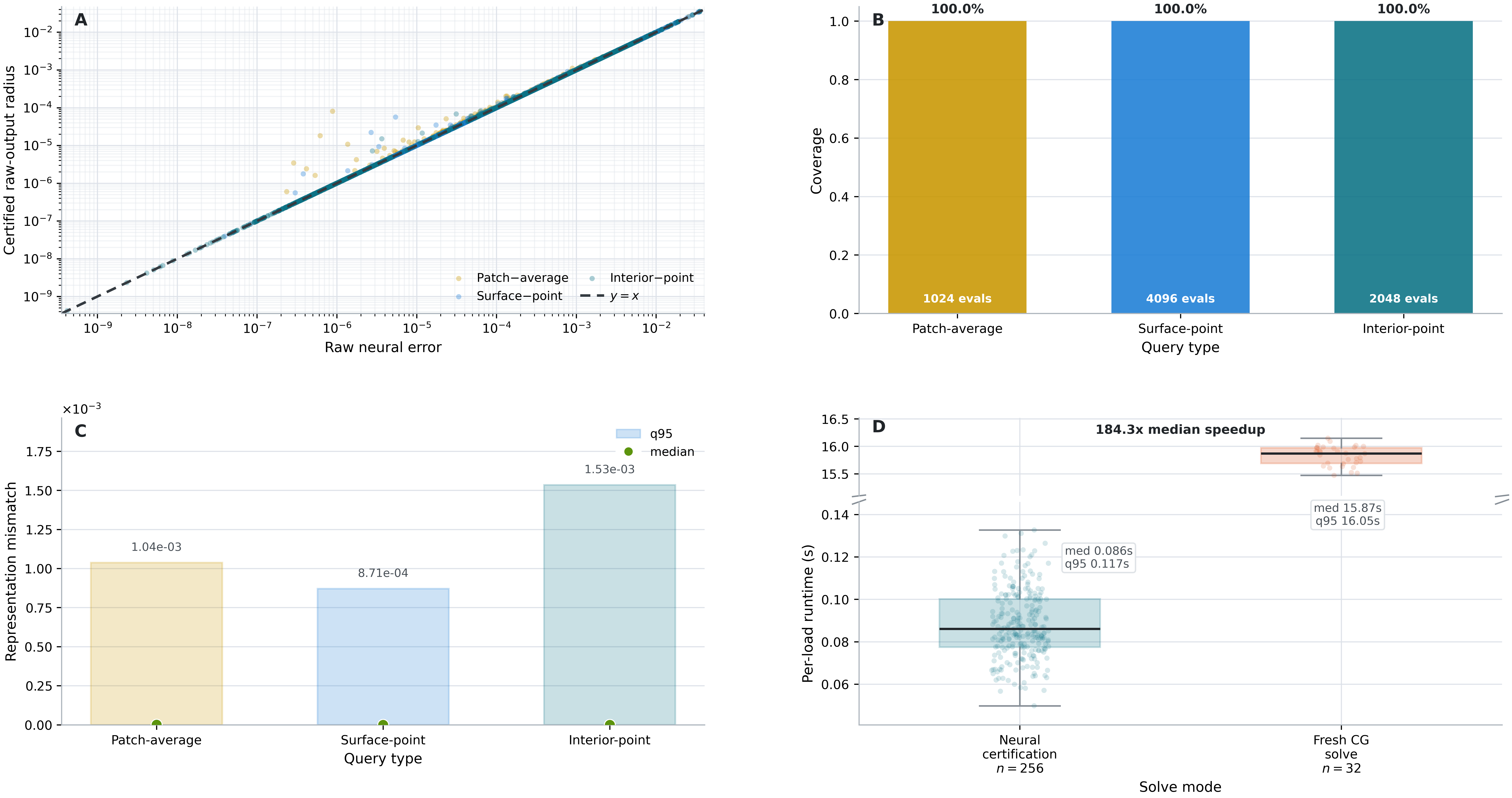}
    \caption{Point queries in three dimensions. A: maximum sensitivity for nodal point queries and fixed-radius averages over the refinement sequence; the observed point-query slope is \(-0.55\). B: maximum pointwise bias and maximum signed term \(\abs{g_{i,h}^{\top}\tau_h^\pi}\); the largest discrepancy over the refinement sequence is \(7.16\times 10^{-14}\). C: maximum pointwise bias and maximum norm route \(\sigma_{i,h}T_h\); on the finest mesh the norm route is \(17.1\times\) larger.}
    \label{fig:3d-pointwise}
\end{figure}

\subsection{Large-scale SimJEB validation: query geometry, Green sensitivities, correction, and transfer}
\label{subsec:exp-simjeb}

The pointwise claims are tested on manufactured nodal and off-grid queries, including the refinement study in \Cref{subsec:exp-3d-pointwise}. The SimJEB study then addresses whether the compatible-query algebra remains query-specific and computable in a large three-dimensional benchmark when the reported quantities are patch-average displacement functionals assembled in the same admissible coefficient basis. Those quantities fall directly under the scalar compatible-query setting of \eqref{eq:general_discrete_query}.

The benchmark is SimJEB, a public finite-element simulation dataset of crowdsourced jet-engine bracket designs; each entry includes a CAD file, tetrahedral mesh, triangular surface mesh, and structural simulation results for four load cases \cite{WhalenBeyeneMueller2021SimJEB}. We use public bracket 148. A compatible \(P_1\) tetrahedral small-strain linear-elasticity system was reconstructed from the released mesh, material parameters and load specification, support conditions, and nodal displacement file. After support elimination, the reduced system is square. The vertical load case is used as the primary test problem. The four reported scalar queries are patch-average surface \(u_z\) values on a loaded interface patch, a near-support patch, a high-deflection bridge-arm patch, and a remote low-sensitivity patch. The benchmark geometry and the query-dependent adjoint Green maps are given directly in \Cref{fig:simjeb-setup-green}; the raw-versus-corrected displacement fields are provided in \Cref{fig:simjeb-raw-corrected}.

\begin{figure*}[t]
    \centering
    \adjustimage{max width=0.48\textwidth,max totalheight=0.23\textheight}{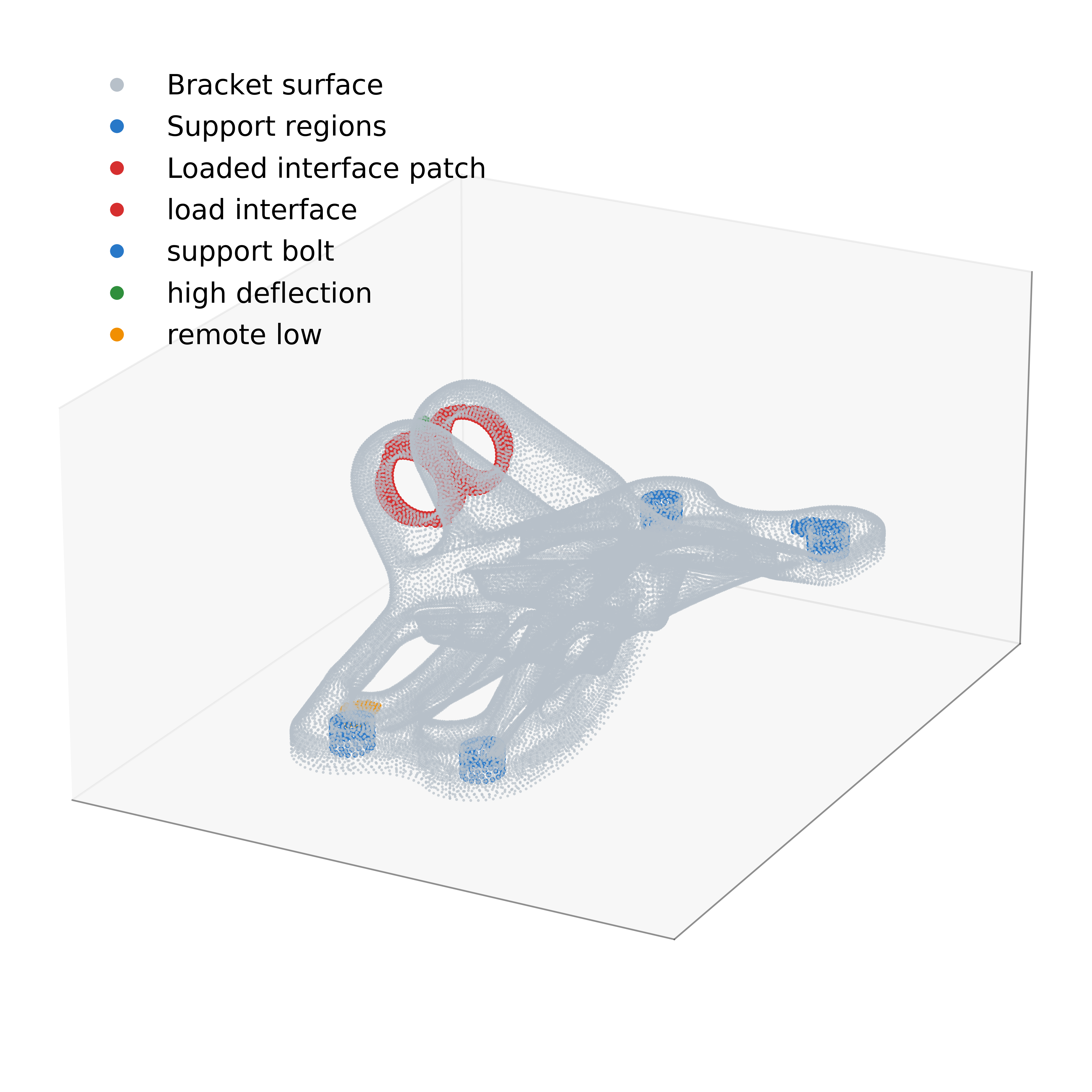}

    \vspace{0.4em}

    \adjustimage{max width=0.62\textwidth,max totalheight=0.30\textheight}{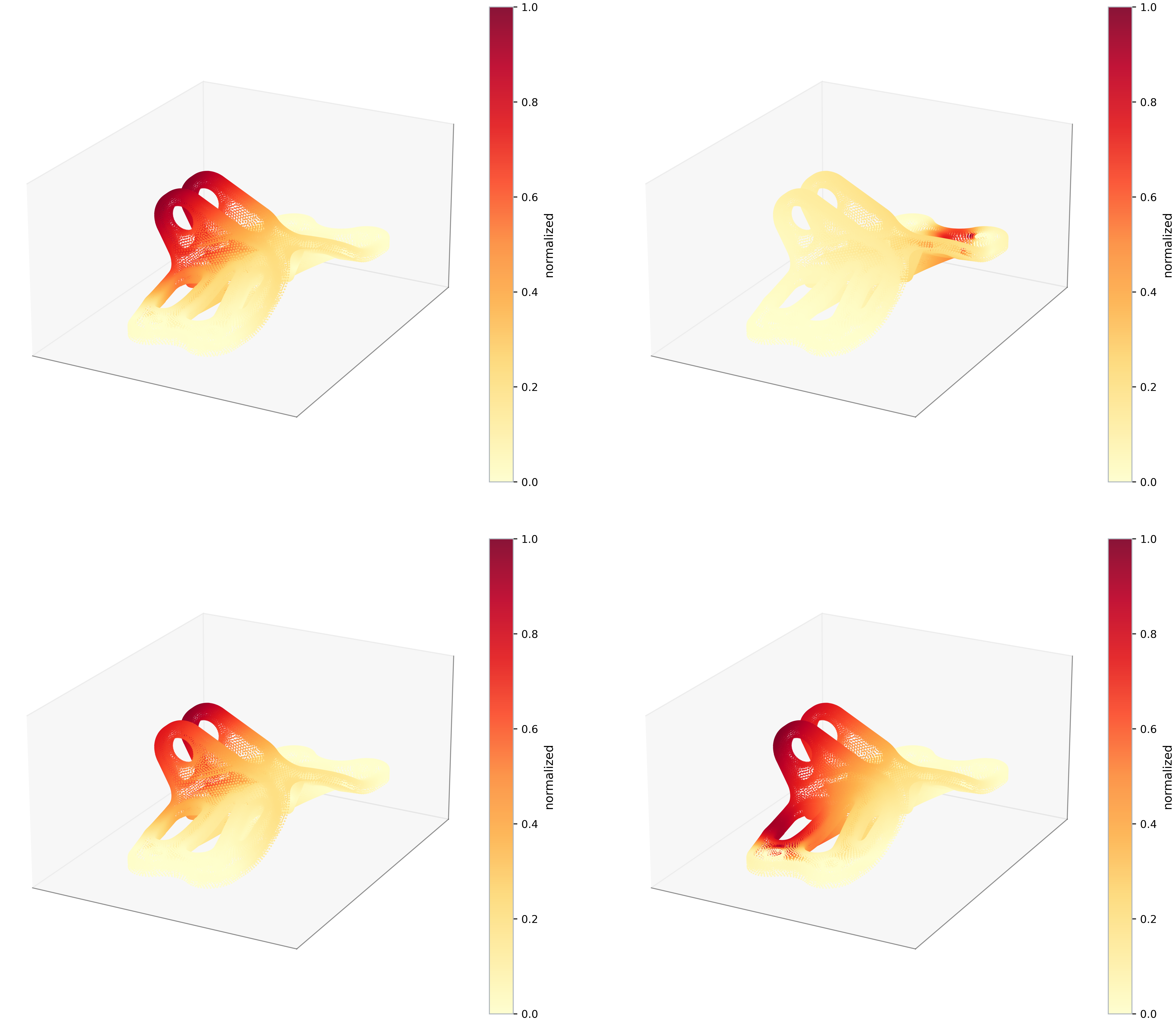}
    \caption{SimJEB bracket 148 used in the large-scale validation. Top: geometry, support regions, loaded interface patch, and the four scalar compatible query patches. Bottom: normalized adjoint Green magnitudes for the four queries. The spatial concentration differs strongly by query and explains the observed variation in sensitivities, norm-only widths, and localization radii.}
    \label{fig:simjeb-setup-green}
\end{figure*}

The Green maps quantify the querywise sensitivities used in the decomposition, localization, and randomized-sensitivity results. They indicate the physical structure of the querywise sensitivities \(\sigma_{i,h}\) used later in the analysis. The load-interface and high-deflection queries exhibit broader and stronger concentration, whereas the near-support and remote queries are more localized.

Starting from zero displacement, \(12\) CG iterations on the reduced compatible system produced the inexact state \(U_\theta\). The compatible residual \(r_h=K_hU_\theta-f_h\) was then formed using the same assembled stiffness matrix, eliminated support constraints, and reconstructed load vector. For each patch query, an adjoint solve gave the signed correction \(g_i^\top r_h\) and the corresponding norm-only certificate.

\Cref{fig:simjeb-decomposition} reports the patch-averaged discrete error, the signed correction, and the wider norm-only width for the four patch queries. The signed correction matches the actual discrete error at all four patches, as predicted by \Cref{prop:admissible_adjoint_green_identity}. The norm-only width is consistently conservative, and its conservatism is query-dependent in direct agreement with the heterogeneous Green concentration in \Cref{fig:simjeb-setup-green}. The near-support and remote patches have smaller norm widths, while the load-interface and high-deflection patches have larger sensitivities.

\begin{figure}[t]
    \centering
    \fitfigure{simjeb_certificate_decomposition}
    \caption{Querywise certificate decomposition on the SimJEB bracket. For each patch query, the actual discrete error is compared with the signed correction and the wider norm-only width. The signed correction tracks the actual discrete error, while the norm-only width remains conservative and varies substantially across queries.}
    \label{fig:simjeb-decomposition}
\end{figure}

The transfer layer was then tested using the released SimJEB vertical-case nodal displacement field as an external admissible comparison state \(U_h^{\mathrm{rel}}\). With
\[
    \tau_h^{\mathrm{rel}}:=K_hU_h^{\mathrm{rel}}-f_h,
\]
the comparison identity from \Cref{prop:admissible_discretization_bias} gives
\[
    \ell_i^\top(U_h^\star-U_h^{\mathrm{rel}})
    =
    -g_i^\top\tau_h^{\mathrm{rel}}.
\]
This does not certify the released field as a continuum reference solution. It tests the discrete comparison identity against an independently released admissible field on the same mesh. Because the figure reports absolute values, the sign convention does not affect the displayed comparison.

\Cref{fig:simjeb-released-transfer} shows that the signed comparison identity holds to numerical precision for all four patch queries. The maximum identity gap is \(2.67\times10^{-12}\,\mathrm{mm}\). The worst released-field patch bias is \(8.11\times10^{-8}\,\mathrm{mm}\), attained at the high-deflection patch. The norm comparison bound remains conservative: the four norm bounds are \(2.62\times10^{-4}\), \(2.02\times10^{-5}\), \(3.69\times10^{-4}\), and \(1.73\times10^{-5}\,\mathrm{mm}\) for the load-interface, near-support, high-deflection, and remote patches, respectively. These results separate two effects. The signed comparison identity is sharp for the released comparison field, whereas the Cauchy--Schwarz norm bound is valid but substantially more conservative.

\begin{figure}[H]
    \centering
    \fitfigure{simjeb_transfer_proxy}
    \caption{Released-field transfer validation on SimJEB bracket 148. The released vertical-case nodal displacement field is used as an external admissible comparison state \(U_h^{\mathrm{rel}}\) on the same mesh. A: the exact released-field patch bias and the signed comparison identity agree to numerical precision for all four patch queries. B: the corresponding norm comparison bound is conservative and query-dependent. All plotted quantities are absolute patch-average displacement differences in \(\mathrm{mm}\).}
    \label{fig:simjeb-released-transfer}
\end{figure}

\subsection{Projected neural certification on the SimJEB load family}
\label{subsec:exp-simjeb-projected-neural}

The previous SimJEB study starts from an inexact compatible solve and evaluates the coefficient-space identities on one load case. The present test instead starts from a raw neural predictor and evaluates the raw-output interval \eqref{eq:actual_pinn_output_interval} after explicit projection into the admissible space. The benchmark is again SimJEB bracket 148, now with the four-load family
\[
F_\mu
=
\mu_1 F_{\mathrm{vertical}}
+
\mu_2 F_{\mathrm{horizontal}}
+
\mu_3 F_{\mathrm{diagonal}}
+
\mu_4 F_{\mathrm{torsion}}.
\]
For each load, the neural model produces a raw coefficient vector \(\widetilde U_\theta(\mu)\), which is then projected to an admissible compatible state \(U_\theta(\mu)\in\Uhadm\). Certification uses the residual of this projected state, while the reported interval remains centered at the unprojected neural output. The test set contains \(256\) loads and \(28\) scalar queries: four patch-average surface queries, sixteen off-grid surface-point queries, and eight off-grid interior-point queries, for a total of \(7168\) query evaluations. For evaluation, a fresh compatible CG solve on the same reduced system provides the discrete target \(U_h^\star(\mu)\). This experiment therefore evaluates the discrete stage of the raw-output certificate, rather than the continuous transfer layer.

\begin{figure}[t]
    \centering
    \includegraphics[width=\columnwidth]{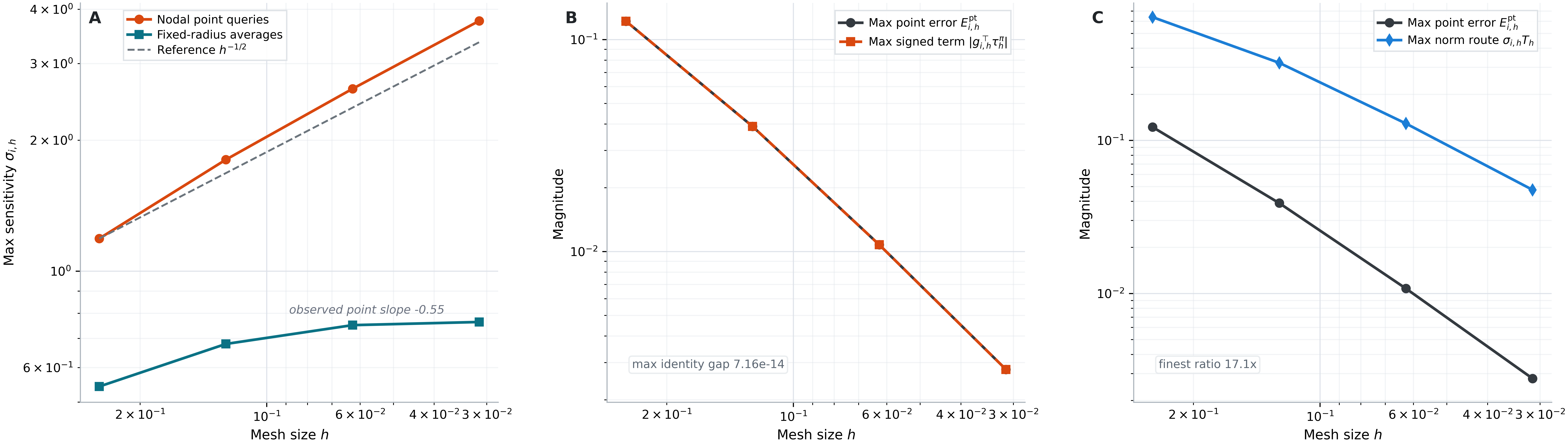}
    \caption{Projected neural certification on the SimJEB load family. A: certified raw-output radius against raw neural error over all \(7168\) test query evaluations; the dashed line is \(y=x\). B: coverage by query type. C: median and q95 representation mismatch \(\zeta_{i,h}^{\theta}\) by query type. D: per-load runtime for projected-neural certification and for a fresh compatible CG solve. All quantities are reported with respect to the compatible discrete target.}
    \label{fig:simjeb-projected-neural}
\end{figure}

\Cref{fig:simjeb-projected-neural} summarizes the resulting behavior. Panel A compares the raw neural error with the certified raw-output radius over all test query evaluations. The points lie on or above the line \(y=x\), and most are concentrated near that line. This is consistent with the corrected center matching the compatible discrete target to solver tolerance, so the numerical width in \eqref{eq:actual_pinn_output_radius} is largely governed by \(\abs{\utheta(x_i)-c_{i,h}^{\mathrm{corr}}}\). Panel B reports coverage \(1.0000\) for patch-average, surface-point, and interior-point queries. Panel C shows that the representation mismatch is small for most evaluations but not uniformly negligible; its upper tail is larger for interior queries than for the other query classes. Panel D compares the per-load runtime with a fresh compatible CG solve.

\begin{table}[t]
    \centering
    \caption{Projected neural certification on the SimJEB load family. Coverage and error statistics are reported over \(256\) test loads, \(28\) certified queries, and \(7168\) query evaluations. Runtime statistics use \(256\) projected-neural evaluations and \(32\) fresh compatible CG solves.}
    \begin{fittabular}{@{}cccccccc@{}}
        \toprule
        Coverage & Med. raw err. & q95 raw err. & Med. raw rad. & q95 raw rad. & q95 $\zeta_{i,h}^{\theta}$ & Max corr.-center err. & Med. speedup \\
        \midrule
        1.0000 & $3.6451\times 10^{-4}$ & $3.9033\times 10^{-3}$ & $3.6479\times 10^{-4}$ & $3.9033\times 10^{-3}$ & $1.1042\times 10^{-3}$ & $3.5227\times 10^{-12}$ & $184.3\times$ \\
        \bottomrule
    \end{fittabular}
    \label{tab:simjeb-projected-neural}
\end{table}

The aggregate values are reported in \Cref{tab:simjeb-projected-neural}. Over all \(7168\) test query evaluations, the raw-output interval contains the compatible discrete target in every case. The median raw error and median raw-output radius are \(3.6451\times 10^{-4}\) and \(3.6479\times 10^{-4}\), while the corresponding q95 values are \(3.9033\times 10^{-3}\) and \(3.9033\times 10^{-3}\). The q95 representation mismatch is \(1.1042\times 10^{-3}\). The corrected-center error remains at solver tolerance, with maximum \(3.5227\times 10^{-12}\). The median projected-neural certification runtime is \(8.61\times 10^{-2}\,\mathrm{s}\), compared with \(1.59\times 10^{1}\,\mathrm{s}\) for a fresh compatible CG solve, corresponding to a median speedup of \(184.3\times\).

\subsection{Localization and the necessity of an explicit tail term}
\label{subsec:exp-localization}

\Cref{fig:localization-toy} uses a dumbbell-domain benchmark to test the projector-based local/tail split from \Cref{prop:local_tail_projector_certificate}. The three query locations are chosen to induce qualitatively different adjoint geometries: near the source, in the bridge, and in the remote lobe. The local-only estimate is not uniformly reliable across these cases. Near the source, the local contribution dominates and the tail contribution decays rapidly. In the bridge and remote-lobe cases, a nontrivial tail remains necessary over a substantial range of radii. The certified local+tail estimate remains consistent with the actual point error across the sweep, whereas the local-only and tail-only terms are each insufficient when used alone. This is the computational implication of \Cref{prop:local_tail_projector_certificate}: locality is query-dependent and a deterministic certificate requires an explicit tail term.

\begin{figure}[t]
    \centering
    \fitfigure{localization}
    \caption{Localization on a dumbbell-domain benchmark. Top row: normalized adjoint Green influence maps for three query locations. Bottom row: radius sweeps comparing the local-only term, the certified local+tail estimate, the tail-only term, and the actual point error. The required radius is strongly query-dependent, and a tail term is essential for the bridge and remote-lobe cases.}
    \label{fig:localization-toy}
\end{figure}

The same mechanism appears on the large-scale SimJEB queries. The Green maps in \Cref{fig:simjeb-setup-green} already show the corresponding query dependence, while the bound-term density maps and full SimJEB radius sweeps are retained in \Cref{fig:simjeb-bound-term-maps,fig:simjeb-localization}.

\subsection{Simultaneous randomized sensitivity estimation}
\label{subsec:exp-randomized}

The exact computation of all query sensitivities becomes increasingly expensive as the number of queries grows. \Cref{fig:simjeb-randomized} therefore evaluates the Gaussian probing construction from \Cref{subsec:randomized_sensitivity} on the SimJEB bracket. The simultaneous upper bound attains the nominal \(95\%\) all-query coverage across the tested probe counts. The inflation factor decreases rapidly when moving from four to sixteen probes and then plateaus, indicating that additional probes give limited further reduction. The representative trial shows the expected one-sided bound: the randomized quantities remain above the exact sensitivities. The runtime panel shows the corresponding cost of the probe solves. The randomized construction therefore reduces sensitivity estimation to a fixed number of probe solves while retaining a simultaneous coverage statement. The querywise spread of the randomized inflation factors is consistent with the heterogeneous Green concentration already visible in \Cref{fig:simjeb-setup-green}.

Additional synthetic probing diagnostics and direct runtime scaling against exact adjoints are reported in \Cref{fig:toy-randomized,fig:probe-scaling}.

\begin{figure}[t]
    \centering
    \fitfigure{simjeb_randomized_sensitivity}
    \caption{Simultaneous randomized sensitivity estimation on the SimJEB bracket. Top left: empirical all-query coverage against the nominal \(95\%\) target. Top right: median and \(95\)th-percentile inflation of the randomized upper bounds relative to exact sensitivities. Bottom left: probe runtime as a function of the number of Gaussian probes. Bottom right: a representative trial at \(m=16\), showing the one-sided upper-bound property query by query.}
    \label{fig:simjeb-randomized}
\end{figure}

\subsection{Nonlinear certification via verified exact roots}
\label{subsec:exp-nonlinear-root}

The nonlinear exact-root experiment tests \Cref{thm:nonlinear_root_interval}. \Cref{fig:nonlinear-root} shows that the corrected certificate contracts under refinement and remains substantially tighter than the norm-only version. The width decomposition further shows that, on this problem, the comparison-to-truth term dominates the nonlinear remainder term by several orders of magnitude. The remainder is nevertheless retained because it is required for verification in the certified root neighborhood. The verified-ball diagnostics show that the certified root neighborhood stays small relative to the base residual scale, and the per-query plot confirms that the reported radii remain comparable to the corrected errors query by query. The stationary-target variant exhibits the same qualitative behavior and is provided in \Cref{fig:nonlinear-ls}.

\begin{figure}[t]
    \centering
    \fitfigure{nonlinear_exact_root}
    \caption{Nonlinear exact-root certificates. Top left: worst corrected-center and norm-only errors and certificates under refinement (worst corrected-center error perfectly overlaps with ours). Top right: width decomposition into the nonlinear remainder term and the comparison-to-truth term. Bottom left: verified-ball diagnostics showing the residual at the PINN base point and the certified root-ball radius. Bottom right: per-query corrected error versus reported radius.}
    \label{fig:nonlinear-root}
\end{figure}

Across these tests, the empirical results are consistent with the certificate chain. Compatibility is necessary. The linear corrected center recovers the compatible discrete target. The width decomposition separates discrete error, sensitivity, and transfer effects instead of aggregating them into a single scalar. On the SimJEB load family, the projected-neural study shows that the raw-output interval remains valid after projection into the admissible space, with a nonzero upper-tail mismatch term and per-load certification cost well below that of a fresh compatible solve. Localization requires an explicit tail. On SimJEB bracket 148, the released-field comparison experiment verifies the signed transfer identity against an external public displacement field and shows the expected conservatism of the norm comparison bound. Gaussian probing preserves simultaneous coverage at lower cost. The nonlinear exact-root construction remains non-vacuous after the remainder term is included. \Cref{app:additional_experiments} reports corresponding results for inexact linear algebra, nonlinear stationary targets, supplementary SimJEB diagnostics, and additional load-family studies.

\section{Conclusion}
\label{sec:conclusion}

In this work, we introduce a constructive framework for certified error bounds of numerical PINNs. The analysis shows that pointwise residual minimization alone is insufficient to guarantee convergence over the full domain, motivating the need for compatibility between the learned solution, the discrete numerical system, and the query mechanism used for evaluation. This leads naturally to discretized domains with sampled query points and certified transfer operators to the continuous setting. Mesh-based $P_1$ finite element spaces provide a particularly effective realization of this principle, although the proposed framework is not restricted to this choice.

The resulting certification decomposes the total error into explicit and interpretable contributions associated with the discrete residual, the discrete-to-continuous transfer, and the evaluation comparison. For linear problems, these quantities can be computed exactly and deterministically, while localized, randomized, and nonlinear extensions broaden the framework to more challenging settings. The theoretical results are supported by experiments on synthetic benchmarks and on SimJEB, a public dataset containing geometrically complex three-dimensional domains.

More broadly, this work establishes a bridge between neural PDE solvers and certified numerical analysis by combining learned approximation with rigorous verification. Rather than replacing classical numerical methods, the framework identifies how neural operators can be integrated into reliable scientific computing pipelines while retaining explicit control of approximation error. Promising directions for future work include combining the computed sensitivities with conformal prediction for sharper calibration, extending the framework to ill-posed and low-regularity systems, and coupling the certification layer with geometry-aware graph, hypergraph, and sheaf neural architectures. In particular, tangent-bundle and sheaf neural constructions suggest ways to generate vector-valued admissible fields on curved or heterogeneous geometries \cite{BattiloroWangRiessDiLorenzoRibeiro2024TangentBundle}, while transferability results for hypergraph neural networks suggest a possible route toward certified amortized prediction across changing mesh, load, and interaction structures \cite{HayhoeRiessZavlanosPreciadoRibeiro2024Hypergraph}. A further long-term direction is to express compatibility itself in a sheaf-theoretic local-to-global language, especially for multiphysics or heterogeneous-domain problems where local admissible data must agree across interfaces \cite{GhristRiess2022TarskiLaplacian}.

\bibliographystyle{elsarticle-num}
\bibliography{references}

\clearpage
\appendix

\crefalias{section}{appendix}
\crefalias{subsection}{subappendix}
\crefalias{subsubsection}{subappendix}
\renewcommand{\thesubsection}{\Alph{section}.\arabic{subsection}}
\renewcommand{\thesubsubsection}{\Alph{section}.\arabic{subsection}.\arabic{subsubsection}}
\renewcommand{\thetheorem}{\Alph{section}.\arabic{theorem}}
\renewcommand{\theproposition}{\thetheorem}
\renewcommand{\thelemma}{\thetheorem}
\renewcommand{\thecorollary}{\thetheorem}
\renewcommand{\thedefinition}{\thetheorem}
\renewcommand{\theassumption}{\thetheorem}
\renewcommand{\theremark}{\thetheorem}

\section{Additional Experiments and Reporting Details}
\label{app:additional_experiments}

This appendix gives experimental material that supports the certificate results and clarifies the numerical interpretation of the intervals.

\subsection{Certification under inexact linear algebra}
\label{appsubsec:exp-inexact}

\Cref{fig:inexact-linear} isolates the algebraic allowance terms from \Cref{prop:inexact_primal_correction,prop:inexact_adjoint_certificate}. On the manufactured benchmark, the certified primal and adjoint allowances decay in step with the actual corrected-center shift as the CG tolerance is tightened. The sensitivity inflation is close to one once the solves exit the low-accuracy regime, and the full-column-rank overdetermined example shows the same conservative behavior.

\Cref{fig:simjeb-inexact} repeats the primal-correction study on the SimJEB bracket. The observed center shift decreases with the number of correction iterations, and the certified allowance remains above it at each patch query and for the max-over-queries statistic. These plots illustrate certified early stopping: the remaining linear-algebra error can be reported explicitly rather than absorbed without decomposition into the interval.

\begin{figure}[H]
    \centering
    \fitfigure{inexact_linear_algebra}
    \caption{Inexact linear algebra on the manufactured benchmark. Top left: certified primal algebra allowance versus actual center shift. Top right: certified adjoint allowance versus actual center shift. Bottom left: sensitivity inflation caused by the inexact adjoint solve. Bottom right: corresponding primal-correction study for a full-column-rank overdetermined case.}
    \label{fig:inexact-linear}
\end{figure}

\subsection{Full-column-rank overdetermined extension}
\label{appsubsec:exp-rectangular-extension}

This subsection presents the manufactured linear example that isolates the full-column-rank overdetermined extension from \Cref{appsec:linear_rectangular_extension}. This panel isolates the effective-residual refinement in the overdetermined setting.

\begin{figure}[H]
    \centering
    \adjustimage{width=0.72\columnwidth,clip,trim={.5\width} 0 0 {.5\height}}{linear_calibration.png}
    \caption{Full-column-rank overdetermined extension on the manufactured linear benchmark. The norm-only width based on the effective residual is consistently smaller than the width based on the full residual.}
    \label{fig:linear-calibration-rectangular-extension}
\end{figure}

\begin{figure}[H]
    \centering
    \fitfigure{simjeb_inexact_correction}
    \caption{Early-stopped primal correction on the SimJEB bracket. Left: observed corrected-center shift versus the number of correction CG iterations at the four patch queries and for the max-over-queries statistic. Right: the corresponding certified primal allowances.}
    \label{fig:simjeb-inexact}
\end{figure}

\subsection{Nonlinear least-squares stationary targets}
\label{appsubsec:exp-nonlinear-ls}

\Cref{fig:nonlinear-ls} evaluates the stationary-target certificate from \Cref{thm:nonlinear_ls_interval}. The corrected certificate contracts with refinement and is substantially tighter than the norm-only version. The stationarity remainder is small on this problem relative to the comparison-to-truth term, but it is retained because the theorem is stated around a verified stationary point rather than an exact discrete root. The verified stationary-ball radii remain small, and the per-query radii stay aligned with the corrected errors.

\begin{figure}[H]
    \centering
    \fitfigure{nonlinear_least_squares}
    \caption{Nonlinear least-squares stationary-target certificates. Top left: worst corrected-center and norm-only errors and certificates under refinement. Top right: width decomposition into the stationarity remainder and the comparison-to-truth term. Bottom left: verified stationary-ball diagnostics. Bottom right: per-query corrected error versus reported radius.}
    \label{fig:nonlinear-ls}
\end{figure}

\subsection{Supplementary SimJEB field and localization diagnostics}
\label{appsubsec:exp-simjeb-extra}

\Cref{subsec:exp-simjeb} gives the bracket/query geometry and query-dependent Green maps for the SimJEB benchmark. This subsection gives the raw-versus-corrected field comparison, bound-term density maps, and full localization sweeps, which support interpretation but are not required for the primary theorem-level narrative.

\Cref{fig:simjeb-raw-corrected} compares the uncorrected early-stopped field with the corrected field that coincides with the compatible discrete target. The difference field is spatially structured rather than uniform, which is why the decomposition is reported patch by patch.

\Cref{fig:simjeb-localization} performs a radius sweep for the four patch queries. The certified local+tail estimate remains above the actual error across the sweep, whereas the local-only and tail-only terms are each inadequate on parts of the radius range. The load-interface and high-deflection queries retain a sizeable tail over long radii, while the near-support and remote queries localize more rapidly.

\begin{figure}[H]
    \centering
    \fitfigure{simjeb_raw_corrected}
    \caption{Uncorrected-versus-corrected displacement field on the SimJEB bracket. Left: uncorrected early-stopped field. Middle: corrected field equal to the compatible discrete target. Right: spatially structured difference between the two fields.}
    \label{fig:simjeb-raw-corrected}
\end{figure}

\begin{figure*}[!t]
    \centering
    \adjustimage{max width=\textwidth,max totalheight=0.82\textheight}{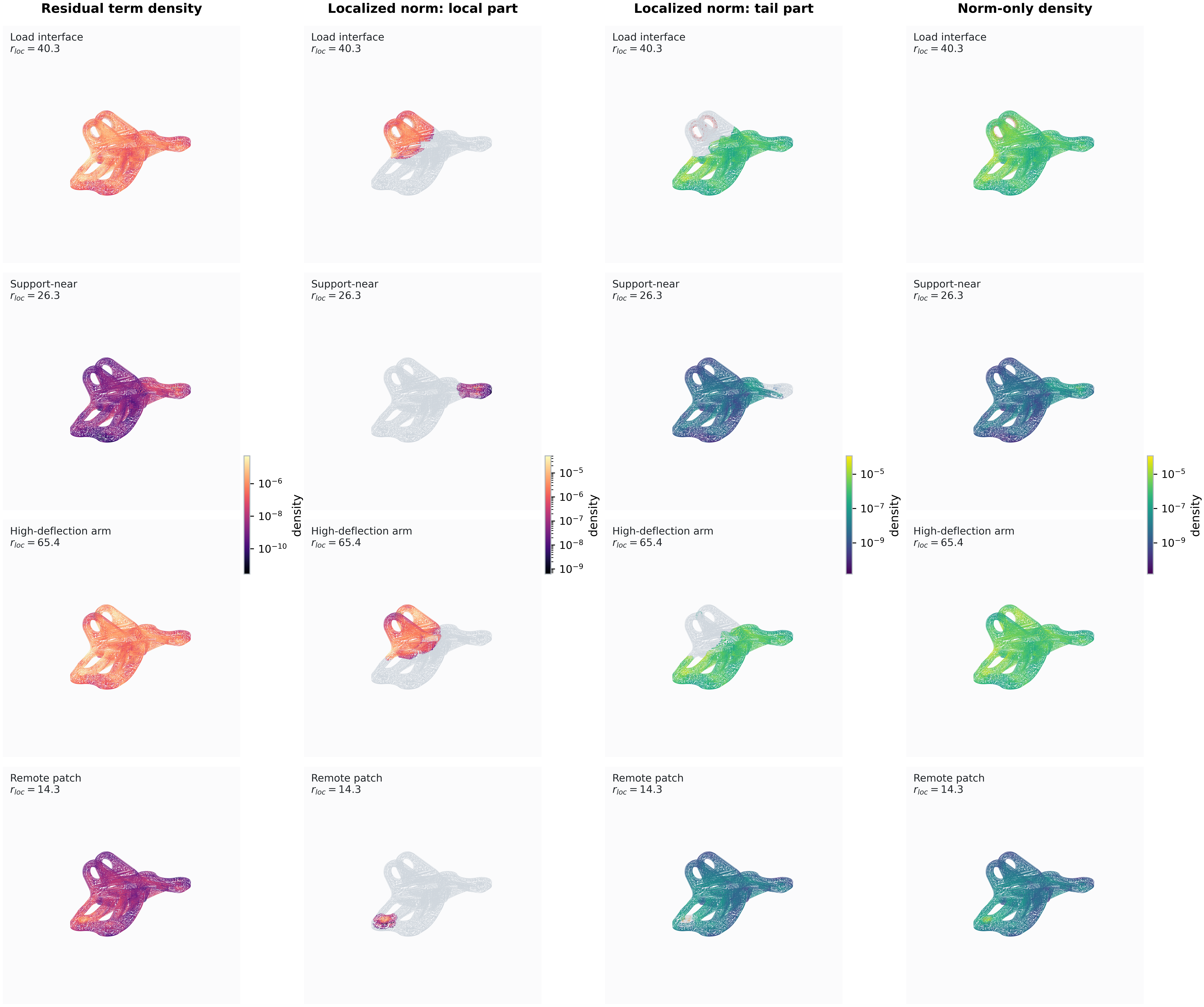}
    \caption{Spatial density of the SimJEB certificate terms for the four patch queries. Rows correspond to the load interface, support-near patch, high-deflection arm, and remote patch; \(r_{\mathrm{loc}}\) is the selected query-dependent localization radius. Columns show, from left to right, the residual correction density, the localized norm contribution inside the selected ball, the complementary tail contribution outside it, and the unsplit norm-only density. Brighter colors indicate larger contribution; gray marks zero contribution in the displayed split. The maps visualize certificate terms, not displacement.}
    \label{fig:simjeb-bound-term-maps}
\end{figure*}

\begin{figure}[H]
    \centering
    \fitfigure{simjeb_localization}
    \caption{Localization radius sweeps on the SimJEB bracket. Each panel compares the local-only term, the certified local+tail estimate, the pure norm local+tail quantity, the tail-only term, and the actual error for one patch query.}
    \label{fig:simjeb-localization}
\end{figure}

\subsection{Supplementary diagnostics for randomized sensitivity estimation}
\label{appsubsec:exp-randomized-extra}

The synthetic probing tests complement the SimJEB results. \Cref{fig:toy-randomized} shows the same two qualitative trends on a controlled benchmark: the simultaneous upper bound attains the nominal coverage and the inflation factor decays with the number of probes. \Cref{fig:probe-scaling} isolates the computational tradeoff. Exact adjoints scale with the number of queries, whereas probing cost is essentially fixed by the number of probe solves and becomes advantageous for large query sets. The second panel shows the corresponding cost--tightness tradeoff: more probes cost more but reduce the inflation of the \(95\)th-percentile bound.

\begin{figure}[H]
    \centering
    \fitfigure{randomized_sensitivity}
    \caption{Synthetic randomized-sensitivity diagnostics. Left: empirical simultaneous coverage against the nominal target. Middle: inflation of the randomized upper bound relative to exact sensitivities. Right: representative trial showing the one-sided upper-bound behavior.}
    \label{fig:toy-randomized}
\end{figure}

\begin{figure}[H]
    \centering
    \fitfigure{probing_runtime_scaling}
    \caption{Runtime scaling for exact adjoints and Gaussian probing. Left: runtime versus the number of query points for exact per-query adjoints and fixed probe budgets. Right: the cost--tightness tradeoff between runtime and inflation.}
    \label{fig:probe-scaling}
\end{figure}

\subsection{Amortized load families}
\label{appsubsec:exp-amortized}

The load-family experiments address the many-instance setting discussed in \Cref{rem:pinn_role_linear_exact_correction}. \Cref{fig:simjeb-amortized-family} studies a linear reduced load-family surrogate. Increasing the output rank reduces both the certified test quantities and the residual energy, with the signed correction remaining close to the actual 95th-percentile maximum query error and the norm-only width remaining wider.

\Cref{fig:simjeb-amortized-nn} reports the corresponding physics-trained neural load-family model. The training and validation losses stabilize, the certified 95th-percentile maximum signed correction tracks the actual 95th-percentile maximum query error, and rank three gives the smallest reported certified test quantity among the tested neural models while substantially reducing residual energy relative to rank two. These results characterize certificate behavior in the amortized regime, where the neural model produces many compatible coefficient vectors and the certificate verifies each one.

\begin{figure}[H]
    \centering
    \fitfigure{simjeb_amortized_family}
    \caption{Amortized reduced load-family surrogate on the SimJEB bracket. Left: q95 certified test quantities versus output rank. Right: residual energy versus output rank.}
    \label{fig:simjeb-amortized-family}
\end{figure}

\begin{figure}[H]
    \centering
    \fitfigure{simjeb_amortized_nn}
    \caption{Amortized physics-trained neural load-family model. Left: representative training and validation residual losses. Middle: q95 certified test quantities versus network output rank. Right: residual energy and best validation fit versus network output rank.}
    \label{fig:simjeb-amortized-nn}
\end{figure}

\subsection{Experimental configuration summary}
\label{appsubsec:experimental-configuration}
\Cref{tab:app-experimental-configuration} gathers the configuration used in the numerical experiments throughout the paper.

For the main linear manufactured benchmark, the data are generated on \(\Omega=(0,1)^2\) with homogeneous Dirichlet boundary conditions from
\[
    -\Delta u^\star=f,
\]
using the manufactured solution
\[
\begin{aligned}
    u^\star(x,y)
    &=
    x(1-x)y(1-y)
    \exp\!\left(
    -\frac{(x-0.7)^2+(y-0.35)^2}{0.12^2}
    \right) \\
    &\quad
    +0.1\sin(3\pi x)\sin(2\pi y).
\end{aligned}
\]
The forcing is generated by applying the operator to this solution, \(f=-\Delta u^\star\), so \(u^\star\) is the exact continuous solution by construction. The compatible \(P_1\) systems are assembled on structured meshes with \(8\), \(16\), \(24\), and \(32\) cells per coordinate direction. For each mesh, we use both the finite-element solution \(u_h^{\mathrm{FE}}\) and the nodal interpolant \(u_h^\pi=\mathcal J_hu^\star\). The trained boundary-vanishing coordinate MLP is evaluated through its nodal values in the admissible \(P_1\) space, and the fixed query set contains \(12\) interior nodal queries and \(12\) off-grid point queries.

\begin{table*}[t]
    \centering
    \caption{Main experimental configurations.}
    \scriptsize
    \begin{adjustbox}{max width=\textwidth}
    \begin{tabular}{@{}p{0.14\textwidth}p{0.19\textwidth}p{0.18\textwidth}p{0.19\textwidth}p{0.16\textwidth}p{0.16\textwidth}@{}}
        \toprule
        Experiment & Problem/data & Model or certified state & Objective or solve & Mesh/data details & Queries/notes \\
        \midrule
        Linear manufactured calibration
        &
        Manufactured 2D unit-square problem with analytic solution containing localized Gaussian and sinusoidal components
        &
        Boundary-vanishing coordinate PINN (MLP); direct nodal values in the compatible discrete space
        &
        Normalized mean-squared compatible residual of the reduced FE system; Adam, learning rate \(2\times10^{-3}\), 600 epochs; 3 hidden layers, width 96, Tanh; seeds 0, 1, 2
        &
        Mesh levels 8, 16, 24, and 32 cells per axis
        &
        12 nodal queries and 12 fixed off-grid queries
        \\
        \addlinespace
        Point queries in three dimensions
        &
        Manufactured 3D refinement study on nested meshes
        &
        No separately trained neural model; compatible Galerkin target and admissible comparison state
        &
        Refinement study on the compatible linear system
        &
        Nested three-dimensional meshes
        &
        Interior nodal point queries and fixed-radius averages of the same scalar displacement component
        \\
        \addlinespace
        Nonlinear exact-root certification
        &
        Manufactured 1D semilinear Dirichlet problem with \(\lambda=20\)
        &
        Boundary-vanishing coordinate PINN (MLP); direct nodal values in the discrete nonlinear state
        &
        Normalized mean-squared discrete semilinear residual; Adam, learning rate \(2\times10^{-3}\), 1400 epochs; 3 hidden layers, width 96, Tanh; seeds 0, 1, 2
        &
        Meshes with 31, 63, 95, and 127 interior nodes
        &
        8 fixed query points
        \\
        \addlinespace
        Main SimJEB large-scale validation
        &
        SimJEB bracket 148 (``butterfly''), vertical load case
        &
        No separately trained neural model; certified state is an admissible approximate FE state
        &
        12 conjugate-gradient iterations from zero displacement on the compatible reduced linear-elasticity system
        &
        \(P_1\) tetrahedral reconstruction; 129{,}260 volume nodes; 640{,}197 tetrahedra; 50{,}994 surface nodes; 385{,}998 free DOFs; \(E=113{,}800.0\), \(\nu=0.342\), density \(4.43\times10^{-9}\)
        &
        Four patch-average surface \(u_z\) queries: load interface, support-near, high-deflection, and remote
        \\
        \addlinespace
        SimJEB load-family appendix study
        &
        Same SimJEB bracket with load family
        \(F_\mu=\mu_1F_{\mathrm{vertical}}+\mu_2F_{\mathrm{horizontal}}+\mu_3F_{\mathrm{diagonal}}+\mu_4F_{\mathrm{torsion}}\)
        &
        Coefficient-output neural model (MLP); reduced compatible basis coefficients; output ranks 2, 3, and 4
        &
        Normalized compatible residual over the load family; AdamW, learning rate \(2\times10^{-3}\), batch size 64, 1200 epochs; 3 hidden layers, width 128, GELU; best checkpoint by validation loss
        &
        512 training, 128 validation, and 256 test random scaled load combinations
        &
        Same four patch-average \(u_z\) queries; rank 3 was the strongest practical reported model
        \\
        \bottomrule
    \end{tabular}
    \end{adjustbox}
    \label{tab:app-experimental-configuration}
\end{table*}

\section{Auxiliary Analytical Material}
\label{appsec:main_relocated_material}

\subsection{Continuous residual-to-point stability route}
\label{appsec:continuous_residual_route}

This subsection gives an alternative route that is not used in the certificate chain. Instead of certifying a compatible discrete target and then transferring to the continuous solution, one can work directly with a continuous residual norm strong enough to control point evaluation.

Assume in this subsection that \(\mathcal A:V\to Y\) is linear and bounded, where
\[
    Y:=Y_\Omega\times Y_{\partial\Omega}
\]
is a Hilbert residual space with norm \(\norm{\cdot}_Y\) and inner product \(\langle\cdot,\cdot\rangle_Y\). Define
\begin{equation}
    \mathcal{A}u:=(\Lop u,\Bop u)\in Y,
    \qquad
    b:=(f,g)\in Y.
\end{equation}
Also define the continuous residual map
\begin{equation}
    \Fop(u):=\mathcal{A}u-b.
\end{equation}
For a fixed query point \(x\in\OmegaBar\), let
\[
    \ell_x(u):=u(x).
\]
If point evaluation is controlled directly by the continuous residual, define
\begin{equation}
    S_x
    :=
    \sup_{\substack{w\in V\\ \mathcal{A}w\neq 0}}
    \frac{\abs{\ell_x(w)}}{\norm{\mathcal{A}w}_Y}.
\end{equation}

\begin{proposition}[Continuous residual-to-point stability]
\label{prop:continuous_residual_bound}
Assume \(S_x<\infty\). Then every admissible field \(u\in V\) satisfies
\begin{equation}
    \abs{u(x)-\ustar(x)}
    \le
    S_x\norm{\Fop(u)}_Y.
\end{equation}
\end{proposition}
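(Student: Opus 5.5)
The plan is to apply the definition of \(S_x\) to the single difference field \(w:=u-\ustar\). Since \(\ustar\in V\) by \Cref{ass:continuous_target_well_posed}, \(u\in V\), and \(V\) is a linear space on which \(\mathcal A\) is linear, we have \(w\in V\). Evaluation \(\ell_x\) is linear, so \(\ell_x(w)=u(x)-\ustar(x)\). We need \(\ustar\) to solve \eqref{eq:continuous_pde} exactly, i.e.\ \(\mathcal A\ustar=b\). Then linearity gives
\[
    \mathcal A w=\mathcal A u-\mathcal A\ustar=\mathcal A u-b=\Fop(u).
\]
This is the continuous analogue of the discrete identity in \Cref{prop:admissible_exact_identity}.

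The proof then splits into two cases. If \(\mathcal A w\neq 0\), then \(w\) is admissible in the supremum defining \(S_x\). Hence \(\abs{\ell_x(w)}\le S_x\norm{\mathcal A w}_Y=S_x\norm{\Fop(u)}_Y\), which is the claim.

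If \(\mathcal A w=0\), the right-hand side is zero, so we must show \(u(x)=\ustar(x)\). Here we use uniqueness from \Cref{ass:continuous_target_well_posed}. Both \(u\) and \(\ustar\) satisfy \(\mathcal A(\cdot)=b\), so \(u=\ustar\) and in particular \(\ell_x(w)=0\). Equivalently, \(\ker\mathcal A=\{0\}\) on \(V\).

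The main subtlety is this degenerate case, together with making sure \(\ell_x\) is well defined on all of \(V\). The latter is part of the scope assumption (i) of \Cref{subsec:scope_assumptions}: the query functional is meaningful on the class used. If only local uniqueness were available, I would restrict \(u\) to the neighborhood where uniqueness holds, as the assumption allows. Note also that \(S_x<\infty\) by itself already forces \(\ell_x\) to vanish on \(\ker\mathcal A\). Indeed, adding a kernel element to any \(w\) with \(\mathcal Aw\ne0\) leaves \(\norm{\mathcal A w}_Y\) unchanged while shifting \(\ell_x(w)\) arbitrarily by scaling, which would make \(S_x\) infinite. So the degenerate case can alternatively be closed by this observation even without invoking global uniqueness. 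I would present uniqueness as the primary argument and mention this observation as a remark.
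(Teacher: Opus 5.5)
Your proof is correct and follows the paper's argument. Setting \(w=u-\ustar\), the identity \(\mathcal A\ustar=b\) gives \(\mathcal A w=\Fop(u)\), and the definition of \(S_x\) then yields the bound. Your separate treatment of the case \(\mathcal A w=0\), using the uniqueness in \Cref{ass:continuous_target_well_posed}, makes explicit a step that the paper's one-line proof passes over. The alternative kernel remark would additionally need some \(w_0\in V\) with \(\mathcal A w_0\neq 0\), so keeping uniqueness as the primary argument is the right choice.
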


If, in addition, there exists a residual-side Green representative \(G_x\in Y\) such that
\begin{equation}
    \ell_x(w)=\langle G_x,\mathcal{A}w\rangle_Y
    \qquad
    \text{for all }w\in V,
\end{equation}
then \(S_x\le \norm{G_x}_Y\), and the pointwise error admits the exact representation
\begin{equation}
    u(x)-\ustar(x)
    =
    \langle G_x,\Fop(u)\rangle_Y.
\end{equation}
For any orthogonal projector \(\Pi_{x,R}:Y\to Y\), this yields the local/tail split
\begin{equation}
    \begin{multlined}
    \abs{u(x)-\ustar(x)}
    \le
    \abs{\langle \Pi_{x,R}G_x,\Fop(u)\rangle_Y}\\
    +
    \norm{(I-\Pi_{x,R})G_x}_Y\,
    \norm{(I-\Pi_{x,R})\Fop(u)}_Y.
    \end{multlined}
    \label{eq:dual_green_local_tail_bound}
\end{equation}
This is the direct continuous analogue of the discrete local/tail split used in the primary formulation.

\subsection{Full-column-rank overdetermined linear extension}
\label{appsec:linear_rectangular_extension}

This subsection gives the full-column-rank overdetermined linear extension. The compatible discrete target is now defined by least squares after admissible reduction, while rank-deficient reduced operators remain outside the certified scope.

\begin{assumption}[Full-column-rank overdetermined reduced linear system]
\label{ass:discrete_rectangular_extension}
The reduced operator \(\widehat A_h\in\R^{M_h\times n_h}\) has full column rank with \(M_h\ge n_h\). Rank-deficient reduced operators are outside the certified scope.
\end{assumption}

Define the compatible discrete target by
\begin{equation}
    y_h^\star
    :=
    \argmin_{y\in\R^{n_h}}
    \frac12\norm{\widehat A_hy-\widehat b_h}_{W_h}^2,
    \qquad
    U_h^\star:=U_{0,h}+Z_hy_h^\star.
    \label{eq:rectangular_discrete_target}
\end{equation}
The associated normal equations are
\begin{equation}
    \widehat A_h^\top W_h(\widehat A_hy_h^\star-\widehat b_h)=0.
    \label{eq:rectangular_normal_equations}
\end{equation}

\begin{proposition}[Exact overdetermined residual-to-error identity]
\label{prop:rectangular_exact_identity}
Assume \Cref{ass:discrete_rectangular_extension}. Then
\begin{equation}
    \Utheta-U_h^\star
    =
    Z_h\widehat H_h^{-1}\widehat A_h^\top W_hr_h.
    \label{eq:rectangular_exact_discrete_identity}
\end{equation}
\end{proposition}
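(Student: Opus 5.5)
The plan is to write both the certified vector and the least-squares target in reduced coordinates and subtract. Since \(\Utheta=U_{0,h}+Z_hy_\theta\) and \(U_h^\star=U_{0,h}+Z_hy_h^\star\), we have \(\Utheta-U_h^\star=Z_h(y_\theta-y_h^\star)\). It therefore suffices to show
\[
    y_\theta-y_h^\star=\widehat H_h^{-1}\widehat A_h^\top W_hr_h .
\]

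First, \(\widehat H_h=\widehat A_h^\top W_h\widehat A_h\) is symmetric positive definite under \Cref{ass:discrete_rectangular_extension}. Indeed, for \(y\neq0\), full column rank gives \(\widehat A_hy\neq0\), so \(y^\top\widehat H_hy=\norm{\widehat A_hy}_{W_h}^2>0\) because \(W_h\) is SPD. This makes \(\widehat H_h^{-1}\) well defined. It also shows that the least-squares functional in \eqref{eq:rectangular_discrete_target} is strictly convex, so \(y_h^\star\) is unique and is characterized by the normal equations \eqref{eq:rectangular_normal_equations}.

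Next, start from the reduced form of the compatible residual, \(r_h=\widehat A_hy_\theta-\widehat b_h\), as in \eqref{eq:admissible_pinn_residual}. Apply \(\widehat A_h^\top W_h\) to both sides:
\[
    \widehat A_h^\top W_hr_h=\widehat H_hy_\theta-\widehat A_h^\top W_h\widehat b_h .
\]
The normal equations \eqref{eq:rectangular_normal_equations} give \(\widehat A_h^\top W_h\widehat b_h=\widehat H_hy_h^\star\). Substituting this yields
\[
    \widehat A_h^\top W_hr_h=\widehat H_h(y_\theta-y_h^\star).
\]
Inverting \(\widehat H_h\) and multiplying by \(Z_h\) gives \eqref{eq:rectangular_exact_discrete_identity}.

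The only real obstacle is the invertibility of \(\widehat H_h\), which is handled in the first step. Everything after that is linear algebra, and the argument reduces to \Cref{prop:admissible_exact_identity} when \(\widehat A_h\) is square.
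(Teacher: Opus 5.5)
Your proof is correct and follows essentially the same route as the paper. Both arguments apply \(\widehat A_h^\top W_h\) to \(r_h=\widehat A_hy_\theta-\widehat b_h\), use the normal equations \eqref{eq:rectangular_normal_equations} to get \(\widehat H_h(y_\theta-y_h^\star)=\widehat A_h^\top W_hr_h\), invert \(\widehat H_h\), and multiply by \(Z_h\). The only difference is that you prove invertibility of \(\widehat H_h\) inline, whereas the paper takes it for granted in this proof and establishes it separately in \Cref{prop:weight_metric_role}.
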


\begin{proposition}[Overdetermined adjoint Green identity]
\label{prop:rectangular_adjoint_green_identity}
Assume \Cref{ass:discrete_rectangular_extension}. For every \(\ell_h\in\R^{N_h}\),
\begin{equation}
    \ell_h^\top(\Utheta-U_h^\star)=g_{\ell,h}^\top r_h.
    \label{eq:rectangular_pointwise_identity}
\end{equation}
Moreover,
\begin{equation}
    \sigma_{\ell,h}
    =
    \left(
    \widehat\ell_h^\top\widehat H_h^{-1}\widehat\ell_h
    \right)^{1/2}.
    \label{eq:rectangular_sigma_identity}
\end{equation}
Consequently,
\begin{equation}
    \abs{\ell_h^\top(\Utheta-U_h^\star)}
    \le
    \sigma_{\ell,h}\norm{r_h}_{W_h}.
    \label{eq:rectangular_pointwise_bound}
\end{equation}
\end{proposition}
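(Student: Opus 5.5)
The plan is to reduce the identity \eqref{eq:rectangular_pointwise_identity} to \Cref{prop:rectangular_exact_identity}, to obtain \eqref{eq:rectangular_sigma_identity} by a direct computation with the definitions \eqref{eq:reduced_adjoint_equation}--\eqref{eq:general_sigma_definition}, and to deduce \eqref{eq:rectangular_pointwise_bound} from a weighted Cauchy--Schwarz inequality. Under \Cref{ass:discrete_rectangular_extension}, \(\widehat A_h\) has full column rank and \(W_h\) is symmetric positive definite. Hence \(\widehat H_h=\widehat A_h^\top W_h\widehat A_h\) is symmetric positive definite, so \(q_{\ell,h}=\widehat H_h^{-1}\widehat\ell_h\) is well defined, and so is \(g_{\ell,h}=W_h\widehat A_h\widehat H_h^{-1}\widehat\ell_h\).

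For the identity, apply \(\ell_h^\top\) to \eqref{eq:rectangular_exact_discrete_identity}:
\[
\ell_h^\top(\Utheta-U_h^\star)=\ell_h^\top Z_h\widehat H_h^{-1}\widehat A_h^\top W_hr_h=\widehat\ell_h^\top\widehat H_h^{-1}\widehat A_h^\top W_hr_h .
\]
Transposing, and using the symmetry of \(\widehat H_h^{-1}\) and \(W_h\), gives
\[
\bigl(W_h\widehat A_h\widehat H_h^{-1}\widehat\ell_h\bigr)^\top r_h=g_{\ell,h}^\top r_h .
\]
If \Cref{prop:rectangular_exact_identity} were not available, I would first prove it as follows. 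Write \(\Utheta-U_h^\star=Z_h(y_\theta-y_h^\star)\). Since \(r_h=\widehat A_hy_\theta-\widehat b_h\), the normal equations \eqref{eq:rectangular_normal_equations} give \(\widehat A_h^\top W_hr_h=\widehat A_h^\top W_h\widehat A_h(y_\theta-y_h^\star)=\widehat H_h(y_\theta-y_h^\star)\). Inverting \(\widehat H_h\) then yields the claim.

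For the sensitivity formula, expand
\[
\sigma_{\ell,h}^2=g_{\ell,h}^\top W_h^{-1}g_{\ell,h}=\widehat\ell_h^\top\widehat H_h^{-1}\widehat A_h^\top W_hW_h^{-1}W_h\widehat A_h\widehat H_h^{-1}\widehat\ell_h .
\]
This collapses to \(\widehat\ell_h^\top\widehat H_h^{-1}\widehat H_h\widehat H_h^{-1}\widehat\ell_h=\widehat\ell_h^\top\widehat H_h^{-1}\widehat\ell_h\), and taking square roots gives \eqref{eq:rectangular_sigma_identity}. For the bound, write \(g^\top r=(W_h^{-1/2}g)^\top(W_h^{1/2}r)\) and apply the Euclidean Cauchy--Schwarz inequality. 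This gives \(\abs{g_{\ell,h}^\top r_h}\le\norm{g_{\ell,h}}_{W_h^{-1}}\norm{r_h}_{W_h}=\sigma_{\ell,h}\norm{r_h}_{W_h}\), which is \eqref{eq:rectangular_pointwise_bound}.

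There is no real obstacle. The only care needed is bookkeeping. First, invertibility of \(\widehat H_h\) follows from full column rank together with positive definiteness of \(W_h\), via \(y^\top\widehat H_hy=\norm{\widehat A_hy}_{W_h}^2>0\) for \(y\neq0\). Second, the transposition step needs the symmetry of \(W_h\) and of \(\widehat H_h^{-1}\). Unlike in the square case, one cannot substitute \(g_{\ell,h}=\widehat A_h^{-\top}\widehat\ell_h\). The argument must therefore keep the normal-equation form throughout, and this is exactly where the least-squares characterization of \(y_h^\star\) enters.
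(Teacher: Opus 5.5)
Your proposal is correct and matches the paper's proof: both apply \(\ell_h^\top\) to \Cref{prop:rectangular_exact_identity} and rewrite the result as \((W_h\widehat A_hq_{\ell,h})^\top r_h=g_{\ell,h}^\top r_h\), expand \(\norm{g_{\ell,h}}_{W_h^{-1}}^2\) down to \(\widehat\ell_h^\top\widehat H_h^{-1}\widehat\ell_h\), and finish with weighted Cauchy--Schwarz. Your added checks, namely that \(\widehat H_h\) is invertible because \(y^\top\widehat H_hy=\norm{\widehat A_hy}_{W_h}^2\), and that the transposition step uses the symmetry of \(W_h\) and \(\widehat H_h^{-1}\), are steps the paper leaves implicit.
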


\begin{proposition}[Minimal Green representative]
\label{prop:minimal_green_representative}
For every reduced query vector \(\widehat\ell_h\in\R^{n_h}\), the residual-side Green vector \(g_{\ell,h}\) is the unique minimizer of
\[
    \min_{g\in\R^{M_h}}
    \frac12 g^\top W_h^{-1} g
    \qquad
    \text{subject to}
    \qquad
    \widehat A_h^\top g=\widehat\ell_h.
\]
\end{proposition}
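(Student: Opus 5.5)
This is a strictly convex quadratic program with linear equality constraints, so I will verify the Lagrange (KKT) conditions at \(g_{\ell,h}\) and then get uniqueness from strict convexity. Recall \(g_{\ell,h}=W_h\widehat A_h q_{\ell,h}\), where \(\widehat H_h q_{\ell,h}=\widehat\ell_h\) and \(\widehat H_h=\widehat A_h^\top W_h\widehat A_h\). By \Cref{ass:discrete_rectangular_extension}, \(\widehat A_h\) has full column rank, so \(\widehat H_h\) is symmetric positive definite and \(q_{\ell,h}\) is well defined.

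\emph{Feasibility.} We compute \(\widehat A_h^\top g_{\ell,h}=\widehat A_h^\top W_h\widehat A_h q_{\ell,h}=\widehat H_h q_{\ell,h}=\widehat\ell_h\). The feasible set is therefore nonempty; it is an affine subspace.

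\emph{Optimality.} Take any feasible \(g\) and write \(g=g_{\ell,h}+d\) with \(\widehat A_h^\top d=0\). Expanding the objective gives
\[
\tfrac12 g^\top W_h^{-1}g=\tfrac12 g_{\ell,h}^\top W_h^{-1}g_{\ell,h}+g_{\ell,h}^\top W_h^{-1}d+\tfrac12 d^\top W_h^{-1}d .
\]
The cross term vanishes, since
\[
g_{\ell,h}^\top W_h^{-1}d=q_{\ell,h}^\top\widehat A_h^\top W_h W_h^{-1}d=q_{\ell,h}^\top\widehat A_h^\top d=0,
\]
using the symmetry of \(W_h\). Hence the objective at \(g\) equals the objective at \(g_{\ell,h}\) plus \(\tfrac12 d^\top W_h^{-1}d\). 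This extra term is \(\ge 0\), and it is \(>0\) unless \(d=0\), because \(W_h^{-1}\) is symmetric positive definite. So \(g_{\ell,h}\) is a minimizer, and it is the unique one.

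Equivalently, \(g_{\ell,h}=W_h\widehat A_h q_{\ell,h}\) is the stationarity condition \(W_h^{-1}g=\widehat A_h\lambda\) with multiplier \(\lambda=q_{\ell,h}\). The decomposition above is just the \(W_h^{-1}\)-orthogonal Pythagoras identity between \(\operatorname{range}(W_h\widehat A_h)\) and \(\ker\widehat A_h^\top\).

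There is no real obstacle here. The only points needing care are the invertibility of \(\widehat H_h\), which comes from full column rank, and the symmetry of \(W_h\) used to cancel the cross term. As a consistency check, the minimal value is \(\tfrac12\sigma_{\ell,h}^2\), which agrees with \eqref{eq:rectangular_sigma_identity} in \Cref{prop:rectangular_adjoint_green_identity}.
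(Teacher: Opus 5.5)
Your proof is correct, but it runs in the opposite direction from the paper's. The paper writes the Lagrangian $\frac12 g^\top W_h^{-1}g-\lambda^\top(\widehat A_h^\top g-\widehat\ell_h)$. It reads off $g=W_h\widehat A_h\lambda$ from stationarity and uses the constraint to get $\widehat H_h\lambda=\widehat\ell_h$, hence $\lambda=q_{\ell,h}$ and $g=g_{\ell,h}$. It then cites strict convexity for uniqueness.

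That derivation is constructive: it shows where $g_{\ell,h}$ and the adjoint equation come from. But it tacitly relies on standard convex-programming facts. Specifically, it needs either that a Lagrangian stationary point of a convex objective under affine constraints is a global minimizer, or that a minimizer exists and must satisfy the Lagrange conditions.

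You instead take $g_{\ell,h}$ as a given candidate and check feasibility. You then prove global optimality and uniqueness in one step via the $W_h^{-1}$-orthogonal splitting $g=g_{\ell,h}+d$ with $d\in\ker\widehat A_h^\top$. The cross term vanishes because $W_h^{-1}g_{\ell,h}=\widehat A_h q_{\ell,h}\in\operatorname{range}(\widehat A_h)$.

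Your argument is self-contained, using only symmetry and positive definiteness of $W_h$. It also gives the exact excess $\frac12 d^\top W_h^{-1}d$ over the optimum, so the optimal value $\frac12\sigma_{\ell,h}^2$ comes for free. What it does not do is discover the formula; your closing remark linking it back to the multiplier $\lambda=q_{\ell,h}$ is what connects the two routes.
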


\begin{proposition}[Effective residual and natural error scale]
\label{prop:projected_residual_identity}
Assume \Cref{ass:discrete_rectangular_extension}. Let
\[
    e_{y,h}:=y_\theta-y_h^\star,
    \qquad
    P_h^{\mathrm{eff}}
    :=
    \widehat A_h\widehat H_h^{-1}\widehat A_h^\top W_h
    \in\R^{M_h\times M_h}.
\]
Then \(P_h^{\mathrm{eff}}\) is the \(\langle\cdot,\cdot\rangle_{W_h}\)-orthogonal projector onto \(\operatorname{range}(\widehat A_h)\), and
\begin{equation}
    \widehat A_h e_{y,h}=P_h^{\mathrm{eff}} r_h.
    \label{eq:projected_residual_exact}
\end{equation}
Hence only the effective residual \(P_h^{\mathrm{eff}}r_h\) enters the admissible discrete error. Moreover,
\begin{equation}
    \norm{e_{y,h}}_{\widehat H_h}
    =
    \norm{P_h^{\mathrm{eff}}r_h}_{W_h}
    \le
    \norm{r_h}_{W_h},
    \label{eq:natural_error_scale}
\end{equation}
\begin{equation}
    \ell_h^\top(\Utheta-U_h^\star)
    =
    g_{\ell,h}^\top P_h^{\mathrm{eff}}r_h,
    \label{eq:pointwise_projected_residual_identity}
\end{equation}
and therefore
\begin{equation}
    \abs{\ell_h^\top(\Utheta-U_h^\star)}
    \le
    \sigma_{\ell,h}\norm{P_h^{\mathrm{eff}}r_h}_{W_h}
    \le
    \sigma_{\ell,h}\norm{r_h}_{W_h}.
    \label{eq:pointwise_projected_residual_bound}
\end{equation}
\end{proposition}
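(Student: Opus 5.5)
The plan is to work entirely in reduced coordinates and use only the normal equations \eqref{eq:rectangular_normal_equations} together with invertibility of \(\widehat H_h=\widehat A_h^\top W_h\widehat A_h\). By \Cref{ass:discrete_rectangular_extension}, \(\widehat A_h\) has full column rank and \(W_h\) is symmetric positive definite. Hence \(\widehat H_h\) is symmetric positive definite, and the maps \(\widehat H_h^{-1}\) and \(P_h^{\mathrm{eff}}\) are well defined.

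\emph{Projector property.} First check idempotence:
\[
P_h^{\mathrm{eff}}P_h^{\mathrm{eff}}=\widehat A_h\widehat H_h^{-1}(\widehat A_h^\top W_h\widehat A_h)\widehat H_h^{-1}\widehat A_h^\top W_h=P_h^{\mathrm{eff}}.
\]
Next check \(W_h\)-self-adjointness, i.e. \(W_hP_h^{\mathrm{eff}}=(P_h^{\mathrm{eff}})^\top W_h\). Both sides equal \(W_h\widehat A_h\widehat H_h^{-1}\widehat A_h^\top W_h\), using the symmetry of \(W_h\) and \(\widehat H_h^{-1}\). For the range, note that \(\operatorname{range}(P_h^{\mathrm{eff}})\subseteq\operatorname{range}(\widehat A_h)\). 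Conversely, \(P_h^{\mathrm{eff}}\widehat A_hz=\widehat A_hz\) for every \(z\). Together these give that \(P_h^{\mathrm{eff}}\) is the \(W_h\)-orthogonal projector onto \(\operatorname{range}(\widehat A_h)\).

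\emph{Error identity.} By \Cref{prop:rectangular_exact_identity}, \(e_{y,h}=\widehat H_h^{-1}\widehat A_h^\top W_hr_h\). This follows from subtracting the normal equations at \(y_h^\star\) from \(\widehat A_h^\top W_hr_h=\widehat A_h^\top W_h(\widehat A_hy_\theta-\widehat b_h)\). Multiplying by \(\widehat A_h\) yields \eqref{eq:projected_residual_exact} directly.

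\emph{Error scale.} We have
\[
\norm{e_{y,h}}_{\widehat H_h}^2=e_{y,h}^\top\widehat A_h^\top W_h\widehat A_he_{y,h}=\norm{\widehat A_he_{y,h}}_{W_h}^2=\norm{P_h^{\mathrm{eff}}r_h}_{W_h}^2.
\]
The inequality \(\norm{P_h^{\mathrm{eff}}r_h}_{W_h}\le\norm{r_h}_{W_h}\) is the Pythagorean identity for a \(W_h\)-orthogonal projector.

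\emph{Query identity.} For \eqref{eq:pointwise_projected_residual_identity}, write \(g_{\ell,h}=W_h\widehat A_hq_{\ell,h}\). Then
\[
g_{\ell,h}^\top P_h^{\mathrm{eff}}r_h=q_{\ell,h}^\top\widehat A_h^\top W_h\widehat A_h\widehat H_h^{-1}\widehat A_h^\top W_hr_h=q_{\ell,h}^\top\widehat A_h^\top W_hr_h=g_{\ell,h}^\top r_h.
\]
By \Cref{prop:rectangular_adjoint_green_identity}, the right-hand side equals \(\ell_h^\top(\Utheta-U_h^\star)\). Equivalently, one can observe that \(g_{\ell,h}\) lies in \(W_h\operatorname{range}(\widehat A_h)\), which is fixed by \((P_h^{\mathrm{eff}})^\top\).

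\emph{Final bound.} Apply Cauchy--Schwarz in the \(W_h^{-1}\)/\(W_h\) duality pairing:
\[
\abs{g^\top s}\le\norm{g}_{W_h^{-1}}\norm{s}_{W_h},
\]
with \(s=P_h^{\mathrm{eff}}r_h\) and \(\norm{g_{\ell,h}}_{W_h^{-1}}=\sigma_{\ell,h}\). Then chain with the contraction inequality above to obtain \eqref{eq:pointwise_projected_residual_bound}.

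The only step needing care is the orthogonality claim, since \(P_h^{\mathrm{eff}}\) is generally not symmetric in the Euclidean sense. The remedy is to state and verify self-adjointness in the \(W_h\)-inner product rather than Euclidean symmetry. Everything else is direct matrix algebra.
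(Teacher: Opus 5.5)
Your proof is correct and follows the paper's argument essentially step for step. Both obtain the error identity from \Cref{prop:rectangular_exact_identity} multiplied by \(\widehat A_h\), verify idempotence, \(W_h\)-self-adjointness and the range of \(P_h^{\mathrm{eff}}\), use the energy identity \(\norm{e_{y,h}}_{\widehat H_h}=\norm{\widehat A_he_{y,h}}_{W_h}\), and finish with Cauchy--Schwarz in the \(W_h^{-1}\)/\(W_h\) pairing. The only cosmetic difference is in the query identity: you show \((P_h^{\mathrm{eff}})^\top g_{\ell,h}=g_{\ell,h}\) and invoke \Cref{prop:rectangular_adjoint_green_identity}, whereas the paper writes \(\widehat\ell_h^\top e_{y,h}=g_{\ell,h}^\top\widehat A_he_{y,h}\) and substitutes \eqref{eq:projected_residual_exact}; these are the same one-line computation.
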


\begin{proposition}[Overdetermined projector-based local/tail certificate]
\label{prop:rectangular_local_tail_projector_certificate}
Fix a query point \(x_i\) and a projector \(\Pi_{i,R}:\R^{M_h}\to\R^{M_h}\) with \(\Pi_{i,R}^2=\Pi_{i,R}\). Define
\begin{equation}
    \begin{aligned}
    \bar r_h&:=P_h^{\mathrm{eff}}r_h,\\
    g_{i,h}^{\mathrm{loc}}(R)&:=\Pi_{i,R}^\top g_{i,h},\\
    g_{i,h}^{\mathrm{tail}}(R)&:=(I-\Pi_{i,R})^\top g_{i,h},
    \end{aligned}
\end{equation}
and
\begin{equation}
    \bar r_h^{\mathrm{loc}}(R):=\Pi_{i,R}\bar r_h,
    \qquad
    \bar r_h^{\mathrm{tail}}(R):=(I-\Pi_{i,R})\bar r_h.
\end{equation}
Then
\begin{equation}
    \ell_{i,h}^{\top}(\Utheta-U_h^\star)
    =
    \bigl(g_{i,h}^{\mathrm{loc}}(R)\bigr)^\top \bar r_h^{\mathrm{loc}}(R)
    +
    \bigl(g_{i,h}^{\mathrm{tail}}(R)\bigr)^\top \bar r_h^{\mathrm{tail}}(R),
    \label{eq:rectangular_exact_local_tail_identity}
\end{equation}
and therefore
\begin{equation}
    \begin{multlined}
    \abs{\ell_{i,h}^{\top}(\Utheta-U_h^\star)}
    \le
    \abs{\bigl(g_{i,h}^{\mathrm{loc}}(R)\bigr)^\top \bar r_h^{\mathrm{loc}}(R)}\\
    +
    \norm{g_{i,h}^{\mathrm{tail}}(R)}_{W_h^{-1}}
    \norm{\bar r_h^{\mathrm{tail}}(R)}_{W_h}.
    \end{multlined}
    \label{eq:rectangular_certified_local_green_tail_bound}
\end{equation}
If \(P_h^{\mathrm{eff}}r_h\) is not formed explicitly, one may conservatively replace \(\bar r_h\) by \(r_h\).
\end{proposition}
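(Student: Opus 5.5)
The plan is to derive everything from the overdetermined adjoint Green identity, \Cref{prop:rectangular_adjoint_green_identity}, combined with \Cref{prop:projected_residual_identity}. The key observation is that the Green vector already lies in the range that the effective projector preserves, so that \(g_{i,h}^\top r_h=g_{i,h}^\top \bar r_h\).

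\textbf{Step 1: the Green vector is fixed by the adjoint projector.} By definition \(g_{i,h}=W_h\widehat A_h q_{i,h}\). Then
\[
(P_h^{\mathrm{eff}})^\top g_{i,h}=W_h\widehat A_h\widehat H_h^{-1}\widehat A_h^\top W_h\widehat A_h q_{i,h}=W_h\widehat A_h q_{i,h}=g_{i,h},
\]
using \(\widehat A_h^\top W_h\widehat A_h=\widehat H_h\). This identity is exactly \eqref{eq:pointwise_projected_residual_identity} of \Cref{prop:projected_residual_identity}, which I may invoke directly. It gives
\[
\ell_{i,h}^\top(\Utheta-U_h^\star)=g_{i,h}^\top \bar r_h .
\]

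\textbf{Step 2: split the residual and transpose the projector.} Insert \(I=\Pi_{i,R}+(I-\Pi_{i,R})\) and write
\[
g_{i,h}^\top\bar r_h=g_{i,h}^\top\Pi_{i,R}\bar r_h+g_{i,h}^\top(I-\Pi_{i,R})\bar r_h.
\]
The question is how this matches the statement's form \((\Pi^\top g)^\top(\Pi\bar r)=g^\top\Pi^2\bar r\). Because \(\Pi_{i,R}^2=\Pi_{i,R}\) and \((I-\Pi_{i,R})^2=I-\Pi_{i,R}\), each term equals the corresponding term above. This yields \eqref{eq:rectangular_exact_local_tail_identity}. The same idempotence argument underlies \Cref{prop:local_tail_projector_certificate}; only \(r_h\) is replaced by \(\bar r_h\). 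This is the one place where idempotence is used, and it is the only step requiring care. No orthogonality of \(\Pi_{i,R}\) is needed.

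\textbf{Step 3: bound the tail term.} Apply the triangle inequality, then the weighted Cauchy--Schwarz inequality \(\abs{a^\top b}\le\norm{a}_{W_h^{-1}}\norm{b}_{W_h}\) to the tail term, writing \(a^\top b=(W_h^{-1/2}a)^\top(W_h^{1/2}b)\). This gives \eqref{eq:rectangular_certified_local_green_tail_bound}.

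\textbf{Step 4: the conservative replacement.} For the final remark, \Cref{prop:rectangular_adjoint_green_identity} gives \(\ell_{i,h}^\top(\Utheta-U_h^\star)=g_{i,h}^\top r_h\) directly. Repeating Steps 2 and 3 with \(r_h\) in place of \(\bar r_h\) gives a valid identity and bound. The main subtlety is to check that this replacement is sound as an identity, not merely as an inequality, and it is, by that proposition.
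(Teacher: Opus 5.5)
Your proposal is correct and follows the paper's proof. You obtain $\ell_{i,h}^{\top}(\Utheta-U_h^\star)=g_{i,h}^{\top}\bar r_h$ from \eqref{eq:pointwise_projected_residual_identity}, use idempotence of $\Pi_{i,R}$ and $I-\Pi_{i,R}$ to identify the two split terms with $g_{i,h}^{\top}\Pi_{i,R}\bar r_h$ and $g_{i,h}^{\top}(I-\Pi_{i,R})\bar r_h$, and finish with weighted Cauchy--Schwarz on the tail. Your Step 4 supplies something the paper leaves implicit: you justify replacing $\bar r_h$ by $r_h$ as an exact identity via \Cref{prop:rectangular_adjoint_green_identity} (equivalently $(P_h^{\mathrm{eff}})^{\top}g_{i,h}=g_{i,h}$), and you are right to claim only validity there, since for a general idempotent $\Pi_{i,R}$ the $r_h$-based bound need not dominate the $\bar r_h$-based one.
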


For inexact linear algebra in this full-column-rank overdetermined setting, the least-squares definition of the compatible discrete target is intrinsic, but correction and adjoint quantities should still be computed matrix-free by weighted QR, LSQR/LSMR, or preconditioned Krylov methods rather than by explicitly forming \(\widehat H_h^{-1}\). The certified allowance formulas in \Cref{appsec:linear_auxiliary_material} can then be applied directly.

\subsection{Auxiliary results for linear certification}
\label{appsec:linear_auxiliary_material}

\begin{proposition}[Role of the residual weight]
\label{prop:weight_metric_role}
Let \(W_h\in\R^{M_h\times M_h}\) be symmetric positive definite. Then the reduced normal matrix
\[
    \widehat H_h=\widehat A_h^\top W_h\widehat A_h
\]
is symmetric positive definite if and only if \(\widehat A_h\) has full column rank. Thus \(W_h\) changes the residual geometry but not the rank condition needed for the compatible discrete target.
\end{proposition}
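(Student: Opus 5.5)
The statement is Proposition~\ref{prop:weight_metric_role}: \(\widehat H_h=\widehat A_h^\top W_h\widehat A_h\) is symmetric positive definite if and only if \(\widehat A_h\) has full column rank. The plan is to use the quadratic form of \(\widehat H_h\) together with positive definiteness of \(W_h\). No earlier result of the paper is needed beyond the definitions of \(\widehat A_h\), \(W_h\) and \(\widehat H_h\).

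\textbf{Symmetry.} Symmetry holds unconditionally, since \(W_h^\top=W_h\) gives \(\widehat H_h^\top=\widehat A_h^\top W_h^\top\widehat A_h=\widehat H_h\). So the content of the equivalence is positive definiteness.

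\textbf{The key identity.} For any \(y\in\R^{n_h}\), write
\[
y^\top\widehat H_h y=(\widehat A_h y)^\top W_h(\widehat A_h y)=\norm{\widehat A_h y}_{W_h}^2\ge 0.
\]
Because \(W_h\) is symmetric positive definite, \(\norm{\cdot}_{W_h}\) is a genuine norm on \(\R^{M_h}\). Hence \(y^\top\widehat H_h y=0\) if and only if \(\widehat A_h y=0\).

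\textbf{Both directions.} If \(\widehat A_h\) has full column rank, its kernel is trivial. Then \(y\neq0\) gives \(\widehat A_h y\neq0\) and so \(y^\top\widehat H_h y>0\), which makes \(\widehat H_h\) positive definite. Conversely, if \(\widehat A_h\) is rank deficient, pick \(y\neq0\) in its kernel. Then \(y^\top\widehat H_h y=0\), so \(\widehat H_h\) is not positive definite; indeed \(\widehat H_h y=0\), so it is even singular.

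\textbf{Interpretive remark.} The identity shows \(\ker\widehat H_h=\ker\widehat A_h\) for every symmetric positive definite \(W_h\). Therefore changing \(W_h\) changes only the residual geometry through \(\norm{\cdot}_{W_h}\), not the rank condition that defines the compatible discrete target.

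No step is a genuine obstacle. The only point needing care is that positive definiteness of \(W_h\) is used in the form "\(r^\top W_h r=0\) implies \(r=0\)", which is exactly what transfers triviality of \(\ker\widehat A_h\) to \(\widehat H_h\).
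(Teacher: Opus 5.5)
Your proof is correct and follows the paper's argument: both rest on the identity \(y^\top\widehat H_h y=\norm{\widehat A_h y}_{W_h}^2\) and on positive definiteness of \(W_h\), which gives \(\ker\widehat H_h=\ker\widehat A_h\). Your explicit symmetry check and your remark that \(\widehat H_h\) is singular in the rank-deficient case are small additions that the paper leaves implicit.
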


The overdetermined projector identity from \Cref{appsec:linear_rectangular_extension} also implies the following coefficient-norm consequence:
\begin{equation}
    \norm{\Utheta-U_h^\star}_2
    \le
    \norm{Z_h}_2\,
    \lambda_{\min}(\widehat H_h)^{-1/2}
    \norm{r_h}_{W_h}.
    \label{eq:euclidean_coefficient_error_bound}
\end{equation}

\begin{corollary}[Contraction condition for the linear certificates]
\label{cor:interval_shrinkage_conditions}
For a fixed query \(x_i\),
\begin{equation}
    \rho_{i,h}^{\mathrm{corr}}\to 0
    \quad\Longleftrightarrow\quad
    \sigma_{i,h}T_h+\varepsilon_{i,h}^{\mathrm{eval}}\to 0.
    \label{eq:necessary_sufficient_contraction}
\end{equation}
If
\[
    \sigma_{i,h}T_h=O(h^\alpha),
    \qquad
    \varepsilon_{i,h}^{\mathrm{eval}}=O(h^\beta),
\]
then
\[
    \rho_{i,h}^{\mathrm{corr}}=O(h^{\min\{\alpha,\beta\}}).
\]
If, in addition,
\[
    \abs{g_{i,h}^{\top}r_h}=O(h^\gamma),
\]
then
\[
    \rho_{i,h}^{\mathrm{pred}}=O(h^{\min\{\alpha,\beta,\gamma\}}).
\]
If the reported center is an unprojected neural output and
\[
    \zeta_{i,h}^{\theta}=O(h^\delta),
\]
then
\[
    \rho_{i,h}^{\theta}=O(h^{\min\{\alpha,\beta,\gamma,\delta\}}).
\]
If only the norm-only discrete term is used and
\[
    \sigma_{i,h}\norm{r_h}_{W_h}=O(h^\eta),
\]
then
\[
    \rho_{i,h}^{\mathrm{norm}}=O(h^{\min\{\alpha,\beta,\eta\}}).
\]
\end{corollary}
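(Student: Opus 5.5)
The statement to prove is \Cref{cor:interval_shrinkage_conditions}. It is an elementary consequence of the definitions of the radii in \eqref{eq:corrected_radius_decomposition}--\eqref{eq:norm_radius_decomposition} and of \eqref{eq:actual_pinn_output_radius}, since every radius is a finite sum of nonnegative terms. I would go through the claims one at a time.

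\textbf{Equivalence \eqref{eq:necessary_sufficient_contraction}.} By definition, \(\rho_{i,h}^{\mathrm{corr}}=\sigma_{i,h}T_h+\varepsilon_{i,h}^{\mathrm{eval}}\). The two sides of the equivalence are therefore the same sequence, and the equivalence is a tautology. Because \(\sigma_{i,h},T_h,\varepsilon_{i,h}^{\mathrm{eval}}\ge 0\), I would also note that this is equivalent to each of the two summands tending to zero separately. That reformulation is what makes the statement informative.

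\textbf{Rate statements.} The only tool is the fact that for nonnegative quantities with \(a_h=O(h^{p})\) and \(b_h=O(h^{q})\) as \(h\to 0\), with \(h\le h_0\le 1\), one has \(a_h+b_h=O(h^{\min\{p,q\}})\). This holds because \(h^{p}\le h^{\min\{p,q\}}\) whenever \(h\le 1\).
\begin{enumerate}[label=(\roman*),leftmargin=2.2em]
    \item Applying this fact to \(\rho^{\mathrm{corr}}\) gives \(\rho_{i,h}^{\mathrm{corr}}=O(h^{\min\{\alpha,\beta\}})\).
    \item By \eqref{eq:p1_radius_decomposition}, \(\rho^{\mathrm{pred}}=\abs{g_{i,h}^\top r_h}+\rho^{\mathrm{corr}}\). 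A second application gives \(\rho_{i,h}^{\mathrm{pred}}=O(h^{\min\{\alpha,\beta,\gamma\}})\).
    \item By \eqref{eq:norm_radius_decomposition}, \(\rho^{\mathrm{norm}}=\sigma_{i,h}\norm{r_h}_{W_h}+\rho^{\mathrm{corr}}\). The same argument gives \(\rho_{i,h}^{\mathrm{norm}}=O(h^{\min\{\alpha,\beta,\eta\}})\).
\end{enumerate}

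\textbf{Raw-output radius.} This is the only step needing more than the definitions, and it is the main (mild) obstacle. The radius \(\rho_{i,h}^{\theta}\) is defined through \(\abs{\utheta(x_i)-c_{i,h}^{\mathrm{corr}}}\), not through \(\zeta_{i,h}^\theta\) directly. I would invoke the final inequality of \Cref{thm:linear_deterministic_interval}, namely \(\rho_{i,h}^{\theta}\le\zeta_{i,h}^{\theta}+\rho_{i,h}^{\mathrm{pred}}\), which comes from the triangle inequality through \(\ell_{i,h}^\top\Utheta\). Combining it with the \(\rho^{\mathrm{pred}}\) rate and \(\zeta_{i,h}^\theta=O(h^\delta)\) yields \(\rho_{i,h}^{\theta}=O(h^{\min\{\alpha,\beta,\gamma,\delta\}})\).

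One caveat I would state explicitly is that the big-\(O\) statements are for a fixed query as \(h\to0\), with constants allowed to depend on \(i\).
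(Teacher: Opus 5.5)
Your proof is correct and follows the paper's argument. The equivalence is the definition \eqref{eq:corrected_radius_decomposition}, the rates follow by substitution into the nonnegative sums, and the raw-output rate goes through \(\rho_{i,h}^{\theta}\le\zeta_{i,h}^{\theta}+\rho_{i,h}^{\mathrm{pred}}\) from \Cref{thm:linear_deterministic_interval}. For the norm-only rate you read it directly off \eqref{eq:norm_radius_decomposition}, which is a more apt justification than the chain \eqref{eq:prediction_centered_radius_chain} that the paper cites at that step.
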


\subsubsection{Inexact linear algebra for certification}

Inexact linear algebra should distinguish the square reduced square-system setting from the full-column-rank overdetermined extension. If \(\widehat A_h\) is square and invertible, then the square-system formulas show that the exact primal correction and adjoint sensitivities are obtained from
\[
    \widehat A_h e_\theta = r_h,
    \qquad
    \widehat A_h^\top z_{i,h} = \widehat\ell_{i,h},
\]
with \(g_{i,h}=z_{i,h}\). In that regime the computation should not be performed through \(\widehat H_h=\widehat A_h^\top W_h\widehat A_h\), because that squares the condition number without changing the exact corrected center.

If \(\widehat A_h\) is full-column-rank overdetermined, the normal matrix is intrinsic to the compatible discrete target, but it should still be accessed matrix-free through weighted QR, LSQR/LSMR, preconditioned Krylov methods, or the residual-side constrained problem of \Cref{prop:minimal_green_representative}, rather than by explicitly forming \(\widehat H_h^{-1}\).

\begin{proposition}[Inexact primal correction certificate]
\label{prop:inexact_primal_correction}
Suppose \(\widetilde e_\theta\in\R^{n_h}\) approximates the exact correction solving
\[
    \widehat H_h e_\theta=\widehat A_h^\top W_h r_h,
\]
and define the linear-solve residual
\[
    \eta_h^{\mathrm{prim}}
    :=
    \widehat A_h^\top W_h r_h-\widehat H_h\widetilde e_\theta.
\]
Assume that
\begin{equation}
    \delta_h^{\mathrm{prim}}
    \ge
    \left(
    (\eta_h^{\mathrm{prim}})^\top\widehat H_h^{-1}\eta_h^{\mathrm{prim}}
    \right)^{1/2}.
    \label{eq:primal_solve_error_bound}
\end{equation}
Define
\[
    \widetilde c_{i,h}^{\mathrm{corr,prim}}
    :=
    \ell_{i,h}^{\top}(\Utheta-Z_h\widetilde e_\theta).
\]
Then
\begin{equation}
    \left|
    \widetilde c_{i,h}^{\mathrm{corr,prim}}
    -
    \ell_{i,h}^{\top}U_h^\star
    \right|
    \le
    \sigma_{i,h}\delta_h^{\mathrm{prim}}.
    \label{eq:primal_correction_query_error}
\end{equation}
Consequently,
\begin{equation}
    \begin{multlined}
    \ustar(x_i)\in\\
    \left[
    \widetilde c_{i,h}^{\mathrm{corr,prim}}
    -
    \sigma_{i,h}\bigl(\delta_h^{\mathrm{prim}}+T_h\bigr)
    -
    \varepsilon_{i,h}^{\mathrm{eval}},\right.\\
    \left.
    \widetilde c_{i,h}^{\mathrm{corr,prim}}
    +
    \sigma_{i,h}\bigl(\delta_h^{\mathrm{prim}}+T_h\bigr)
    +
    \varepsilon_{i,h}^{\mathrm{eval}}
    \right].
    \end{multlined}
    \label{eq:inexact_primal_correction_interval}
\end{equation}
\end{proposition}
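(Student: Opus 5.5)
The plan is to reduce everything to an exact energy-type error identity for the inexact correction, and then apply the same Cauchy--Schwarz step as in \Cref{prop:admissible_adjoint_green_identity}. The exact correction \(e_\theta\) solves \(\widehat H_h e_\theta=\widehat A_h^\top W_h r_h\). In the square setting this is equivalent to \(\widehat A_h e_\theta=r_h\). In the overdetermined setting it is the normal-equation form from \Cref{prop:rectangular_exact_identity}. In either case
\[
\Utheta-Z_h e_\theta=U_h^\star,
\]
by \Cref{prop:exact_correction_recovers_discrete_target} or \Cref{prop:rectangular_exact_identity}. Subtracting the definition of \(\widetilde c_{i,h}^{\mathrm{corr,prim}}\) gives
\[
\widetilde c_{i,h}^{\mathrm{corr,prim}}-\ell_{i,h}^\top U_h^\star=\ell_{i,h}^\top Z_h(e_\theta-\widetilde e_\theta)=\widehat\ell_{i,h}^\top(e_\theta-\widetilde e_\theta).
\]

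The key algebraic step is that
\[
\widehat H_h(e_\theta-\widetilde e_\theta)=\widehat A_h^\top W_h r_h-\widehat H_h\widetilde e_\theta=\eta_h^{\mathrm{prim}},
\]
so \(e_\theta-\widetilde e_\theta=\widehat H_h^{-1}\eta_h^{\mathrm{prim}}\). Invertibility of \(\widehat H_h\) follows from \Cref{prop:weight_metric_role}, since \(\widehat A_h\) has full column rank under either assumption. The query error is therefore \(\widehat\ell_{i,h}^\top\widehat H_h^{-1}\eta_h^{\mathrm{prim}}\).

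Next apply Cauchy--Schwarz in the \(\widehat H_h^{-1}\) inner product, which is symmetric positive definite. This gives
\[
\abs{\widehat\ell_{i,h}^\top\widehat H_h^{-1}\eta_h^{\mathrm{prim}}}\le(\widehat\ell_{i,h}^\top\widehat H_h^{-1}\widehat\ell_{i,h})^{1/2}\bigl((\eta_h^{\mathrm{prim}})^\top\widehat H_h^{-1}\eta_h^{\mathrm{prim}}\bigr)^{1/2}.
\]
The first factor equals \(\sigma_{i,h}\) by \eqref{eq:query_sigma_reduced}, and the second is bounded by \(\delta_h^{\mathrm{prim}}\) by \eqref{eq:primal_solve_error_bound}. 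This yields \eqref{eq:primal_correction_query_error}.

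For the interval, use the triangle inequality:
\[
\abs{\widetilde c_{i,h}^{\mathrm{corr,prim}}-\ustar(x_i)}\le\abs{\widetilde c_{i,h}^{\mathrm{corr,prim}}-\ell_{i,h}^\top U_h^\star}+\abs{\ell_{i,h}^\top U_h^\star-\ustar(x_i)}.
\]
The second term is bounded by \(\mu_{i,h}=\sigma_{i,h}T_h+\varepsilon_{i,h}^{\mathrm{eval}}\) through \Cref{prop:admissible_discretization_bias}. In the overdetermined case the same comparison identity holds with \(g_{i,h}\) the least-squares Green vector, and it is bounded by \(\sigma_{i,h}\norm{\tau_h^\pi}_{W_h}\) as in \Cref{prop:rectangular_adjoint_green_identity}. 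Summing the two terms gives the radius \(\sigma_{i,h}(\delta_h^{\mathrm{prim}}+T_h)+\varepsilon_{i,h}^{\mathrm{eval}}\), which is the stated interval.

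The main obstacle is confirming that the transfer bound \(\mu_{i,h}\) is valid in the overdetermined setting. The main text states it only under \Cref{ass:discrete_square_main}. There I would redo the comparison identity using \(P_h^{\mathrm{eff}}\) and \eqref{eq:pointwise_projected_residual_identity}, applied to \(U_h^\pi\) in place of \(\Utheta\). Everything else is a single exact identity followed by one Cauchy--Schwarz step.
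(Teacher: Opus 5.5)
Your proposal is correct and matches the paper's proof essentially step for step. The argument is the same chain: the identity $\widehat H_h(e_\theta-\widetilde e_\theta)=\eta_h^{\mathrm{prim}}$, then Cauchy--Schwarz in the $\widehat H_h^{-1}$ inner product to get $\sigma_{i,h}\delta_h^{\mathrm{prim}}$, then the triangle inequality with the transfer bound $\sigma_{i,h}T_h+\varepsilon_{i,h}^{\mathrm{eval}}$. Your worry about that bound in the overdetermined case, which the paper's proof uses without comment, needs no $P_h^{\mathrm{eff}}$ argument: apply \Cref{prop:rectangular_adjoint_green_identity} with $U_h^\pi$ in place of $\Utheta$, since its proof works verbatim for any admissible vector.
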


Any upper bound \(\bar\sigma_{i,h}\ge\sigma_{i,h}\) may replace \(\sigma_{i,h}\) in \eqref{eq:inexact_primal_correction_interval}; this is useful when \(\sigma_{i,h}\) itself is obtained by an inexact or randomized procedure.

\begin{proposition}[Inexact adjoint certificate]
\label{prop:inexact_adjoint_certificate}
Let \(\widetilde q_{i,h}\) approximate the exact adjoint solution \(q_{i,h}\), and define
\[
    \widetilde g_{i,h}
    :=
    W_h\widehat A_h\widetilde q_{i,h},
    \qquad
    s_{i,h}
    :=
    \widehat\ell_{i,h}-\widehat H_h\widetilde q_{i,h}.
\]
Assume that
\begin{equation}
    \delta_{i,h}^{\mathrm{adj}}
    \ge
    \left(
    s_{i,h}^{\top}\widehat H_h^{-1}s_{i,h}
    \right)^{1/2}.
    \label{eq:adjoint_solve_error_bound}
\end{equation}
Define
\[
    \widetilde\sigma_{i,h}^{+}
    :=
    \|\widetilde g_{i,h}\|_{W_h^{-1}}
    +
    \delta_{i,h}^{\mathrm{adj}}
\]
and
\[
    \begin{multlined}
    \widetilde c_{i,h}^{\mathrm{corr}}
    :=\\
    \ell_{i,h}^{\top}\Utheta-
    \widetilde g_{i,h}^{\top}r_h.
    \end{multlined}
\]
Then
\[
    \|g_{i,h}-\widetilde g_{i,h}\|_{W_h^{-1}}
    \le
    \delta_{i,h}^{\mathrm{adj}},
\]
\[
    \sigma_{i,h}\le \widetilde\sigma_{i,h}^{+},
\]
and
\begin{equation}
    \begin{multlined}
    \ustar(x_i)\in\\
    \left[
    \widetilde c_{i,h}^{\mathrm{corr}}
    -
    \delta_{i,h}^{\mathrm{adj}}\|r_h\|_{W_h}
    -
    \widetilde\sigma_{i,h}^{+}T_h
    -
    \varepsilon_{i,h}^{\mathrm{eval}},\right.\\
    \left.
    \widetilde c_{i,h}^{\mathrm{corr}}
    +
    \delta_{i,h}^{\mathrm{adj}}\|r_h\|_{W_h}
    +
    \widetilde\sigma_{i,h}^{+}T_h
    +
    \varepsilon_{i,h}^{\mathrm{eval}}
    \right].
    \end{multlined}
    \label{eq:inexact_solve_certified_interval}
\end{equation}
\end{proposition}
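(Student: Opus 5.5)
The proof rests on the $\widehat H_h$-geometry. First I establish the adjoint-side error bound, and then I feed it into the exact Green identity of \Cref{prop:admissible_adjoint_green_identity} and the comparison identity of \Cref{prop:admissible_discretization_bias}. I would carry out the argument in the general setting, where $g_{i,h}=W_h\widehat A_h q_{i,h}$ with $\widehat H_h q_{i,h}=\widehat\ell_{i,h}$. This form covers both the square case and the full-column-rank case.

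\textbf{Step 1 (Green-vector error).} Set $d:=q_{i,h}-\widetilde q_{i,h}$. Then $\widehat H_h d=s_{i,h}$, so $d=\widehat H_h^{-1}s_{i,h}$. We also have $g_{i,h}-\widetilde g_{i,h}=W_h\widehat A_h d$. Hence
\[
\norm{g_{i,h}-\widetilde g_{i,h}}_{W_h^{-1}}^2
=d^\top\widehat A_h^\top W_h W_h^{-1}W_h\widehat A_h d
=d^\top\widehat H_h d
=s_{i,h}^\top\widehat H_h^{-1}s_{i,h},
\]
which is at most $(\delta_{i,h}^{\mathrm{adj}})^2$ by \eqref{eq:adjoint_solve_error_bound}. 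The triangle inequality in $\norm{\cdot}_{W_h^{-1}}$ then gives $\sigma_{i,h}=\norm{g_{i,h}}_{W_h^{-1}}\le\widetilde\sigma_{i,h}^{+}$.

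\textbf{Step 2 (center shift).} By \Cref{prop:admissible_adjoint_green_identity}, the exact corrected center satisfies $c_{i,h}^{\mathrm{corr}}=\ell_{i,h}^\top U_h^\star$. Therefore
\[
\widetilde c_{i,h}^{\mathrm{corr}}-\ell_{i,h}^\top U_h^\star=(g_{i,h}-\widetilde g_{i,h})^\top r_h .
\]
The Cauchy--Schwarz inequality in the dual pairing of $W_h^{-1}$ and $W_h$ bounds this by $\delta_{i,h}^{\mathrm{adj}}\norm{r_h}_{W_h}$, using Step 1. In the overdetermined case I would instead use \Cref{prop:rectangular_adjoint_green_identity}. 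The same pairing bound applies there; one could even use $P_h^{\mathrm{eff}}r_h$, but $r_h$ suffices.

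\textbf{Step 3 (transfer).} From \eqref{eq:admissible_discretization_bias_bound} we have $\abs{\ell_{i,h}^\top U_h^\star-\ustar(x_i)}\le\sigma_{i,h}T_h+\varepsilon_{i,h}^{\mathrm{eval}}$. Replacing $\sigma_{i,h}$ by its upper bound $\widetilde\sigma_{i,h}^{+}$ from Step 1 only enlarges this bound. Adding the estimates of Steps 2 and 3 through the triangle inequality yields the interval \eqref{eq:inexact_solve_certified_interval}.

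\textbf{Main obstacle.} The only delicate point is the norm identity in Step 1. It requires matching the $W_h^{-1}$-norm on the residual side with the $\widehat H_h^{-1}$-norm on the adjoint residual $s_{i,h}$. This match depends on the symmetry of $W_h$ and on the representation of both $g_{i,h}$ and $\widetilde g_{i,h}$ through the same map $W_h\widehat A_h$. Everything else is triangle and Cauchy--Schwarz inequalities.
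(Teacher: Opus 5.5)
Your proof is correct and follows the paper's argument step for step: the identity $\|g_{i,h}-\widetilde g_{i,h}\|_{W_h^{-1}}^2=s_{i,h}^\top\widehat H_h^{-1}s_{i,h}$, the triangle inequality giving $\sigma_{i,h}\le\widetilde\sigma_{i,h}^{+}$, Cauchy--Schwarz on the center shift $(g_{i,h}-\widetilde g_{i,h})^\top r_h$, and the transfer bound with $\sigma_{i,h}$ enlarged to $\widetilde\sigma_{i,h}^{+}$. Your overdetermined aside is also sound, but in Step 3 you cite \eqref{eq:admissible_discretization_bias_bound}, which the paper states only under the square assumption; in the overdetermined case you would instead apply the rectangular Green identity to $U_h^\pi$, obtaining $\ell_{i,h}^\top(U_h^\pi-U_h^\star)=g_{i,h}^\top\tau_h^\pi$.
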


A sufficient computable choice for \(\delta_h^{\mathrm{prim}}\) or \(\delta_{i,h}^{\mathrm{adj}}\) follows from any coercivity bound
\begin{equation}
    \widehat H_h \succeq \alpha_h P_h,
\end{equation}
where \(P_h\) is symmetric positive definite and \(\alpha_h>0\). Since
\[
    \widehat H_h^{-1}\preceq \alpha_h^{-1}P_h^{-1},
\]
one has
\[
    \left(
    (\eta_h^{\mathrm{prim}})^\top\widehat H_h^{-1}\eta_h^{\mathrm{prim}}
    \right)^{1/2}
    \le
    \alpha_h^{-1/2}\norm{\eta_h^{\mathrm{prim}}}_{P_h^{-1}},
\]
and
\[
    \left(
    s_{i,h}^\top\widehat H_h^{-1}s_{i,h}
    \right)^{1/2}
    \le
    \alpha_h^{-1/2}\norm{s_{i,h}}_{P_h^{-1}}.
\]
Thus the right-hand sides are valid certified choices for
\(\delta_h^{\mathrm{prim}}\) and \(\delta_{i,h}^{\mathrm{adj}}\). In particular, with \(P_h=I\), any verified lower eigenvalue bound \(\lambda_{\min}(\widehat H_h)\ge \alpha_h>0\) is sufficient.

\subsubsection{Verified exact-root ball for square systems}

\begin{proposition}[Verified exact-root ball by Newton--Kantorovich]
\label{prop:newton_kantorovich_verified_ball}
Assume \(M_h=n_h\). Let
\[
    \begin{aligned}
    J_\theta&:=J_h(y_\theta),
    &B_\theta&:=J_\theta^{-1},\\
    r_h&:=F_h(y_\theta),
    &\eta_\theta&:=\|B_\theta r_h\|_{X_h}.
    \end{aligned}
\]
Assume that on the closed ball \(B_{X_h}(y_\theta,R)\),
\begin{equation}
    \begin{multlined}
    \|B_\theta(J_h(u)-J_h(v))\|_{X_h\to X_h}
    \le
    L_\theta^B\|u-v\|_{X_h}\\
    \text{for all }u,v\in B_{X_h}(y_\theta,R).
    \end{multlined}
    \label{eq:kantorovich_lipschitz}
\end{equation}
If
\[
    2L_\theta^B\eta_\theta<1,
\]
define
\begin{equation}
    t_\theta^{\mathrm{root}}
    :=
    \frac{1-\sqrt{1-2L_\theta^B\eta_\theta}}{L_\theta^B},
    \label{eq:kantorovich_radius}
\end{equation}
with \(t_\theta^{\mathrm{root}}=\eta_\theta\) when \(L_\theta^B=0\). If
\[
    t_\theta^{\mathrm{root}}\le R,
\]
then \(F_h\) has a unique zero \(y_h^\star\) in
\[
    B_{X_h}(y_\theta,t_\theta^{\mathrm{root}}).
\]
In particular,
\[
    \|y_\theta-y_h^\star\|_{X_h}
    \le
    t_\theta^{\mathrm{root}}.
\]
\end{proposition}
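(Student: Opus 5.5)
The plan is to avoid the full Newton--Kantorovich majorant-sequence machinery and prove the statement directly with the Banach fixed-point theorem. The map to use is the simplified (chord) Newton map
\[
    G(y):=y-B_\theta F_h(y),
\]
which is well defined on the closed ball \(\mathcal B:=B_{X_h}(y_\theta,t_\theta^{\mathrm{root}})\subseteq B_{X_h}(y_\theta,R)\). Since \(B_\theta=J_\theta^{-1}\) is invertible, the zeros of \(F_h\) in \(\mathcal B\) are exactly the fixed points of \(G\) in \(\mathcal B\). It therefore suffices to show two things: \(G\) maps \(\mathcal B\) into itself, and \(G\) is a strict contraction there. The Banach fixed-point theorem then gives existence and uniqueness of \(y_h^\star\in\mathcal B\), and \(\norm{y_\theta-y_h^\star}_{X_h}\le t_\theta^{\mathrm{root}}\) is immediate from membership in \(\mathcal B\).

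\textbf{Self-map.} For \(y\in\mathcal B\), write \(d=y-y_\theta\) and
\[
    G(y)-y_\theta=-B_\theta F_h(y_\theta)-B_\theta\bigl(F_h(y)-F_h(y_\theta)-J_\theta d\bigr).
\]
The remainder equals \(\int_0^1 B_\theta\bigl(J_h(y_\theta+sd)-J_h(y_\theta)\bigr)d\,\dd s\). Because the ball is convex, the whole segment lies in \(B_{X_h}(y_\theta,R)\), so \eqref{eq:kantorovich_lipschitz} applies and gives the bound \(\tfrac{L_\theta^B}{2}\norm{d}_{X_h}^2\). Hence
\[
    \norm{G(y)-y_\theta}_{X_h}\le \eta_\theta+\tfrac{L_\theta^B}{2}(t_\theta^{\mathrm{root}})^2=t_\theta^{\mathrm{root}}.
\]
The last equality holds because \(t_\theta^{\mathrm{root}}\) in \eqref{eq:kantorovich_radius} is the smaller root of the quadratic majorant \(\varphi(t)=\tfrac{L_\theta^B}{2}t^2-t+\eta_\theta\). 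I will verify this by direct substitution; the hypothesis \(2L_\theta^B\eta_\theta<1\) ensures the root is real.

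\textbf{Contraction.} For \(u,v\in\mathcal B\),
\[
    G(u)-G(v)=-\int_0^1 B_\theta\bigl(J_h(v+s(u-v))-J_\theta\bigr)(u-v)\,\dd s.
\]
Each point \(v+s(u-v)\) lies in \(\mathcal B\), so the integrand is at most \(L_\theta^B\,t_\theta^{\mathrm{root}}\norm{u-v}_{X_h}\) in norm. The contraction constant is therefore
\[
    L_\theta^B t_\theta^{\mathrm{root}}=1-\sqrt{1-2L_\theta^B\eta_\theta}<1,
\]
using \(2L_\theta^B\eta_\theta<1\) once more. In the degenerate case \(L_\theta^B=0\), \(G\) is constant on \(\mathcal B\) (value \(y_\theta-B_\theta r_h\)), the self-map bound reads \(\eta_\theta=t_\theta^{\mathrm{root}}\), and the argument goes through trivially.

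The main obstacle is bookkeeping rather than depth. The Lipschitz hypothesis is stated only on \(B_{X_h}(y_\theta,R)\), so every segment used must stay inside that ball; this is exactly where \(t_\theta^{\mathrm{root}}\le R\) and convexity of norm balls are needed. I also need to check carefully that the algebraic identities \(\varphi(t_\theta^{\mathrm{root}})=0\) and \(L_\theta^Bt_\theta^{\mathrm{root}}<1\) hold with the stated formula. Uniqueness is claimed only in the radius-\(t_\theta^{\mathrm{root}}\) ball, which is precisely what the contraction argument delivers, so the larger Kantorovich uniqueness radius is not needed.
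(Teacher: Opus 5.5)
Your proof is correct, and it takes a different route from the paper. The paper's proof simply invokes the Newton--Kantorovich theorem at \(y_\theta\) as a black box. It feeds in \(B_\theta\), \(\eta_\theta\) and \(L_\theta^B\) as the theorem's data, identifies \(t_\theta^{\mathrm{root}}\) as the smaller root of \(\eta_\theta+\tfrac{L_\theta^B}{2}t^2=t\), and uses \(t_\theta^{\mathrm{root}}\le R\) only to place the Kantorovich ball inside the domain where the Lipschitz bound holds.

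You instead prove the conclusion directly, applying Banach's fixed-point theorem to the chord map \(G(y)=y-B_\theta F_h(y)\) on the closed \(t_\theta^{\mathrm{root}}\)-ball. The identity \(\eta_\theta+\tfrac{L_\theta^B}{2}(t_\theta^{\mathrm{root}})^2=t_\theta^{\mathrm{root}}\) gives the self-map property. The identity \(L_\theta^Bt_\theta^{\mathrm{root}}=1-\sqrt{1-2L_\theta^B\eta_\theta}<1\) gives the contraction. Both computations check out, as does your treatment of the case \(L_\theta^B=0\).

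What your route buys is self-containment and explicit bookkeeping. You never have to match the hypotheses of a particular textbook version of the theorem; versions differ, for instance, on open versus closed domains and on where the Lipschitz bound must hold. Moreover, both of your estimates use the Lipschitz condition only with one argument fixed at \(y_\theta\). So you actually prove the proposition under the weaker center-Lipschitz hypothesis \(\|B_\theta(J_h(w)-J_\theta)\|_{X_h\to X_h}\le L_\theta^B\|w-y_\theta\|_{X_h}\) for \(w\) in the \(t_\theta^{\mathrm{root}}\)-ball.

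What the cited theorem buys, and your argument does not, is extra information: convergence of the Newton iterates from \(y_\theta\) with a quadratic rate, uniqueness on a larger ball (up to the second root of the majorant) wherever the Lipschitz bound is available, and coverage of the boundary case \(2L_\theta^B\eta_\theta=1\). In that boundary case your contraction constant equals \(1\). None of this is needed for the proposition as stated, which assumes the strict inequality and claims uniqueness only in the \(t_\theta^{\mathrm{root}}\)-ball.
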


\section{Nonlinear Stationary-Target Variant}
\label{appsec:nonlinear_stationary_variant}

This appendix isolates the nonlinear least-squares stationary-target construction so that the exact-root chain remains focused.

\subsection{Certification of nonlinear least-squares stationary targets}
\label{subsec:nonlinear_least_squares}

For this section, assume in addition that \(F_h\) is twice continuously differentiable on the relevant neighborhood. Define
\begin{equation}
    \Phi_h(y):=\frac12\norm{F_h(y)}_{W_h}^2,
    \qquad
    S_h(y):=DF_h(y)^\top W_hF_h(y)\in\R^{n_h}.
    \label{eq:nonlinear_stationarity_map}
\end{equation}
A nonlinear least-squares target \(y_{h,\mathrm{ls}}^\star\) satisfies
\begin{equation}
    S_h(y_{h,\mathrm{ls}}^\star)=0.
\end{equation}

\begin{remark}[Meaning of the nonlinear least-squares target]
\label{rem:nonlinear_ls_target_meaning}
The condition
\[
    S_h(y_{h,\mathrm{ls}}^\star)=0
\]
certifies stationarity, not global optimality. The least-squares certificates below are therefore certificates relative to the verified stationary target in the certified neighborhood. If the intended target is the global minimizer of \(\Phi_h\), then a separate global optimality argument is required. If \(K_h(y_{h,\mathrm{ls}}^\star)\) is positive definite on the certified neighborhood, the stationary target is a strict local minimizer in that neighborhood.
\end{remark}

For any base point \(y\), define
\begin{equation}
    K_h(y):=DS_h(y).
\end{equation}
Explicitly, for \(v\in\R^{n_h}\),
\begin{equation}
    K_h(y)v
    =
    J_h(y)^\top W_hJ_h(y)v
    +
    \bigl[D^2F_h(y)[v]\bigr]^\top W_hF_h(y).
    \label{eq:nonlinear_ls_hessian}
\end{equation}
If \(K_h(y)\) is invertible, define \(q_{\ell,h}^{\mathrm{ls}}(y)\) by
\begin{equation}
    K_h(y)^\top q_{\ell,h}^{\mathrm{ls}}(y)=\widehat\ell_h.
\end{equation}
For a deviation \(d\in\R^{n_h}\), define
\begin{equation}
    R_h^{\mathrm{ls}}(y;d)
    :=
    S_h(y-d)-S_h(y)+K_h(y)d.
    \label{eq:ls_base_point_remainder}
\end{equation}
If \(DS_h\) is Lipschitz on \(B_{X_h}(y,t)\) with constant \(L_h^{\mathrm{ls}}(y,t)\) in the operator norm from \((\R^{n_h},\norm{\cdot}_{X_h})\) to \((\R^{n_h},\norm{\cdot}_{X_h^\ast})\), then
\begin{equation}
    \norm{R_h^{\mathrm{ls}}(y;d)}_{X_h^\ast}
    \le
    \frac{L_h^{\mathrm{ls}}(y,t)}{2}\norm{d}_{X_h}^2,
    \qquad
    \norm{d}_{X_h}\le t,
\end{equation}
because
\begin{equation}
    R_h^{\mathrm{ls}}(y;d)
    =
    \int_0^1 \bigl(K_h(y)-K_h(y-sd)\bigr)d\,\dd s.
\end{equation}

\begin{proposition}[Local least-squares correction at a base point]
\label{prop:nonlinear_ls_base_point}
Let \(y\) be a base point at which \(K_h(y)\) is invertible. Assume the stationary target \(y_{h,\mathrm{ls}}^\star\) satisfies
\begin{equation}
    \norm{y-y_{h,\mathrm{ls}}^\star}_{X_h}\le t_h^{\mathrm{ls}}(y),
\end{equation}
and that a certified remainder bound
\begin{equation}
    \mathcal{R}_h^{\mathrm{ls}}(y;t_h^{\mathrm{ls}}(y))
    \ge
    \norm{R_h^{\mathrm{ls}}(y;y-y_{h,\mathrm{ls}}^\star)}_{X_h^\ast}
\end{equation}
is available. Then, for every reduced functional \(\widehat\ell_h\in\R^{n_h}\),
\begin{equation}
    \abs{
    \widehat\ell_h^\top (y-y_{h,\mathrm{ls}}^\star)
    -
    \bigl(q_{\ell,h}^{\mathrm{ls}}(y)\bigr)^\top S_h(y)
    }
    \le
    \norm{q_{\ell,h}^{\mathrm{ls}}(y)}_{X_h}\,
    \mathcal{R}_h^{\mathrm{ls}}(y;t_h^{\mathrm{ls}}(y)).
    \label{eq:nonlinear_ls_base_point_certificate}
\end{equation}
In particular, for query \(x_i\),
\begin{equation}
    \abs{
    L_{i,h}(y)-L_{i,h}(y_{h,\mathrm{ls}}^\star)
    -
    \bigl(q_{i,h}^{\mathrm{ls}}(y)\bigr)^\top S_h(y)
    }
    \le
    \norm{q_{i,h}^{\mathrm{ls}}(y)}_{X_h}\,
    \mathcal{R}_h^{\mathrm{ls}}(y;t_h^{\mathrm{ls}}(y)).
    \label{eq:local_ls_query_certificate}
\end{equation}
\end{proposition}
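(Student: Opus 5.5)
The plan is to mirror the proof of \Cref{prop:nonlinear_root_base_point}, with the stationarity map \(S_h\) in place of \(F_h\) and \(K_h(y)\) in place of \(J_h(y)\). Set \(d:=y-y_{h,\mathrm{ls}}^\star\). Since \(S_h(y_{h,\mathrm{ls}}^\star)=0\), the definition \eqref{eq:ls_base_point_remainder} evaluated at this \(d\) gives
\[
    R_h^{\mathrm{ls}}(y;d)=S_h(y-d)-S_h(y)+K_h(y)d=-S_h(y)+K_h(y)d .
\]
Rearranging yields the exact identity
\[
    K_h(y)d=S_h(y)+R_h^{\mathrm{ls}}(y;d).
\]

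Next I test this identity against the adjoint vector \(q_{\ell,h}^{\mathrm{ls}}(y)\). Because \(K_h(y)^\top q_{\ell,h}^{\mathrm{ls}}(y)=\widehat\ell_h\), we get
\[
    \widehat\ell_h^\top d
    =\bigl(q_{\ell,h}^{\mathrm{ls}}(y)\bigr)^\top K_h(y)d
    =\bigl(q_{\ell,h}^{\mathrm{ls}}(y)\bigr)^\top S_h(y)
    +\bigl(q_{\ell,h}^{\mathrm{ls}}(y)\bigr)^\top R_h^{\mathrm{ls}}(y;d).
\]
The quantity inside the absolute value in \eqref{eq:nonlinear_ls_base_point_certificate} is therefore exactly the last pairing. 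Invertibility of \(K_h(y)\) ensures that \(q_{\ell,h}^{\mathrm{ls}}(y)\) is well defined, and no full-rank or normal-matrix structure is needed here.

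The bound then follows from the duality pairing between \(\norm{\cdot}_{X_h}\) and \(\norm{\cdot}_{X_h^\ast}\):
\[
    \abs{q^\top R}\le\norm{q}_{X_h}\norm{R}_{X_h^\ast}.
\]
Combining this with the hypothesis \(\norm{R_h^{\mathrm{ls}}(y;d)}_{X_h^\ast}\le\mathcal R_h^{\mathrm{ls}}(y;t_h^{\mathrm{ls}}(y))\) gives \eqref{eq:nonlinear_ls_base_point_certificate}.

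The query statement \eqref{eq:local_ls_query_certificate} follows from \eqref{eq:nonlinear_reduced_query_map}. That formula gives \(L_{i,h}(y)-L_{i,h}(y_{h,\mathrm{ls}}^\star)=\widehat\ell_{i,h}^\top d\), and we then apply the previous bound with \(\widehat\ell_h=\widehat\ell_{i,h}\).

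The only step needing care is the norm convention. The pairing \(q^\top R\) must be bounded by \(\norm{q}_{X_h}\) times the dual norm \(\norm{R}_{X_h^\ast}\), which is simply the definition of the dual norm on \((\R^{n_h})^\ast\). This is precisely why the remainder is measured in \(X_h^\ast\) while \(q\) is measured in \(X_h\). The neighborhood bound \(\norm{d}_{X_h}\le t_h^{\mathrm{ls}}(y)\) enters only through the assumed certified remainder bound, for example via the Lipschitz estimate on \(DS_h\), and not directly in the argument.
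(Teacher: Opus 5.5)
Your proposal is correct and follows the paper's proof essentially step for step. Both derive \(K_h(y)d=S_h(y)+R_h^{\mathrm{ls}}(y;d)\) from \(S_h(y_{h,\mathrm{ls}}^\star)=0\), test it against \(q_{\ell,h}^{\mathrm{ls}}(y)\) using \(K_h(y)^\top q_{\ell,h}^{\mathrm{ls}}(y)=\widehat\ell_h\), bound the remaining pairing by \(X_h\)--\(X_h^\ast\) duality, and obtain the query version from the affine reduced query map.
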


\begin{proposition}[Least-squares discrete-to-continuous comparison]
\label{prop:nonlinear_ls_discrete_to_continuous}
Let \(y_h^\pi\in\R^{n_h}\) be an admissible comparison vector, set
\begin{equation}
    U_h^\pi:=U_{0,h}+Z_h y_h^\pi,
    \qquad
    s_h^\pi:=S_h(y_h^\pi),
\end{equation}
and assume
\begin{equation}
    \varepsilon_{i,h}^{\mathrm{eval}}
    \ge
    \abs{L_{i,h}(y_h^\pi)-\ustar(x_i)}.
\end{equation}
Assume also that \(K_h(y_h^\pi)\) is invertible and that the same stationary target \(y_{h,\mathrm{ls}}^\star\) satisfies
\begin{equation}
    \norm{y_h^\pi-y_{h,\mathrm{ls}}^\star}_{X_h}\le t_h^{\mathrm{ls}}(y_h^\pi),
\end{equation}
together with a certified remainder bound
\begin{equation}
    \mathcal{R}_h^{\mathrm{ls}}(y_h^\pi;t_h^{\mathrm{ls}}(y_h^\pi))
    \ge
    \norm{R_h^{\mathrm{ls}}(y_h^\pi;y_h^\pi-y_{h,\mathrm{ls}}^\star)}_{X_h^\ast}.
\end{equation}
Then
\begin{equation}
    \abs{L_{i,h}(y_{h,\mathrm{ls}}^\star)-\ustar(x_i)}
    \le
    \mu_{i,h}^{\mathrm{ls,disc}},
\end{equation}
where
\begin{equation}
    \mu_{i,h}^{\mathrm{ls,disc}}
    :=
    \abs{
    \bigl(q_{i,h}^{\mathrm{ls}}(y_h^\pi)\bigr)^\top s_h^\pi
    }
    +
    \norm{q_{i,h}^{\mathrm{ls}}(y_h^\pi)}_{X_h}\,
    \mathcal{R}_h^{\mathrm{ls}}(y_h^\pi;t_h^{\mathrm{ls}}(y_h^\pi))
    +
    \varepsilon_{i,h}^{\mathrm{eval}}.
    \label{eq:nonlinear_ls_disc_bias}
\end{equation}
\end{proposition}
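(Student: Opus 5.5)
The plan is to mirror the linear comparison argument of \Cref{prop:admissible_discretization_bias}, with the exact linear identity replaced by the base-point stationarity expansion of \Cref{prop:nonlinear_ls_base_point}. Apply that proposition at the comparison point \(y=y_h^\pi\), which is admissible since \(K_h(y_h^\pi)\) is invertible and the hypotheses give the localization radius \(t_h^{\mathrm{ls}}(y_h^\pi)\) and the certified remainder bound there. For the reduced query \(\widehat\ell_{i,h}\) this yields
\[
\abs{L_{i,h}(y_h^\pi)-L_{i,h}(y_{h,\mathrm{ls}}^\star)-\bigl(q_{i,h}^{\mathrm{ls}}(y_h^\pi)\bigr)^\top s_h^\pi}
\le \norm{q_{i,h}^{\mathrm{ls}}(y_h^\pi)}_{X_h}\,\mathcal{R}_h^{\mathrm{ls}}(y_h^\pi;t_h^{\mathrm{ls}}(y_h^\pi)).
\]
Here we use that \(L_{i,h}\) is affine with linear part \(\widehat\ell_{i,h}\), so \(L_{i,h}(y_h^\pi)-L_{i,h}(y_{h,\mathrm{ls}}^\star)=\widehat\ell_{i,h}^\top(y_h^\pi-y_{h,\mathrm{ls}}^\star)\).

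To make the plan self-contained, we also recall why that expansion holds. Set \(d=y_h^\pi-y_{h,\mathrm{ls}}^\star\). Stationarity \(S_h(y_{h,\mathrm{ls}}^\star)=0\) and the definition \eqref{eq:ls_base_point_remainder} give
\[
K_h(y_h^\pi)d=S_h(y_h^\pi)+R_h^{\mathrm{ls}}(y_h^\pi;d).
\]
Pairing with \(q=q_{i,h}^{\mathrm{ls}}(y_h^\pi)\), and using \(K_h^\top q=\widehat\ell_{i,h}\), we get
\[
\widehat\ell_{i,h}^\top d=q^\top s_h^\pi+q^\top R_h^{\mathrm{ls}}.
\]
The last term is bounded by the dual pairing \(\norm{q}_{X_h}\norm{R_h^{\mathrm{ls}}}_{X_h^\ast}\). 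The one point to check is that this pairing is oriented consistently, i.e.\ that \(q\) lives in the primal norm and the remainder in the dual norm, as the proposition's statement prescribes.

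Next, write the total error as
\[
L_{i,h}(y_{h,\mathrm{ls}}^\star)-\ustar(x_i)=\bigl[L_{i,h}(y_{h,\mathrm{ls}}^\star)-L_{i,h}(y_h^\pi)\bigr]+\bigl[L_{i,h}(y_h^\pi)-\ustar(x_i)\bigr].
\]
The second bracket is bounded by \(\varepsilon_{i,h}^{\mathrm{eval}}\) by assumption. By the expansion, the first bracket equals \(-q^\top s_h^\pi\) up to an error of at most the remainder term. The triangle inequality then gives \(\abs{q^\top s_h^\pi}+\norm{q}_{X_h}\mathcal{R}_h^{\mathrm{ls}}+\varepsilon_{i,h}^{\mathrm{eval}}=\mu_{i,h}^{\mathrm{ls,disc}}\).

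There is no genuine obstacle here; the argument is a direct combination of the base-point proposition with the evaluation comparison. The only care needed is in matching norms and signs, and the absolute values absorb any sign conventions.
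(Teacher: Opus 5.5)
Your proposal is correct and matches the paper's proof: apply \Cref{prop:nonlinear_ls_base_point} at the base point \(y=y_h^\pi\) to bound \(L_{i,h}(y_h^\pi)-L_{i,h}(y_{h,\mathrm{ls}}^\star)-(q_{i,h}^{\mathrm{ls}}(y_h^\pi))^\top s_h^\pi\) by the remainder term, then add the evaluation comparison \(\varepsilon_{i,h}^{\mathrm{eval}}\) via the triangle inequality. Your re-derivation of the base-point expansion and your check of the primal/dual orientation of the pairing agree with the paper, but they are not needed beyond citing that proposition.
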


\begin{theorem}[Nonlinear least-squares pointwise certificate]
\label{thm:nonlinear_ls_interval}
Assume the hypotheses of \Cref{prop:nonlinear_ls_base_point} at the computable base point \(y_\theta\) and of \Cref{prop:nonlinear_ls_discrete_to_continuous} at the comparison point \(y_h^\pi\), with the same stationary target \(y_{h,\mathrm{ls}}^\star\). Define
\begin{equation}
    c_{i,h}^{\mathrm{ls,corr}}
    :=
    L_{i,h}(y_\theta)
    -
    \bigl(q_{i,h}^{\mathrm{ls}}(y_\theta)\bigr)^\top S_h(y_\theta).
    \label{eq:nonlinear_ls_corrected_center}
\end{equation}
Then
\begin{equation}
    \begin{multlined}
    \abs{c_{i,h}^{\mathrm{ls,corr}}-\ustar(x_i)}
    \le
    \norm{q_{i,h}^{\mathrm{ls}}(y_\theta)}_{X_h}\,
    \mathcal{R}_h^{\mathrm{ls}}(y_\theta;t_h^{\mathrm{ls}}(y_\theta))\\
    +
    \mu_{i,h}^{\mathrm{ls,disc}}.
    \end{multlined}
\end{equation}
Hence
\begin{equation}
    \ustar(x_i)\in
    \left[
    c_{i,h}^{\mathrm{ls,corr}}-\rho_{i,h}^{\mathrm{ls}},
    \;
    c_{i,h}^{\mathrm{ls,corr}}+\rho_{i,h}^{\mathrm{ls}}
    \right],
\end{equation}
where
\begin{equation}
    \begin{multlined}
    \rho_{i,h}^{\mathrm{ls}}
    :=\\
    \norm{q_{i,h}^{\mathrm{ls}}(y_\theta)}_{X_h}\,
    \mathcal{R}_h^{\mathrm{ls}}(y_\theta;t_h^{\mathrm{ls}}(y_\theta))\\
    +
    \mu_{i,h}^{\mathrm{ls,disc}}.
    \end{multlined}
\end{equation}
Moreover,
\begin{equation}
    \begin{multlined}
    \abs{L_{i,h}(y_\theta)-\ustar(x_i)}
    \le
    \norm{q_{i,h}^{\mathrm{ls}}(y_\theta)}_{X_h}
    \Bigl(
    \norm{S_h(y_\theta)}_{X_h^\ast}
    +
    \mathcal{R}_h^{\mathrm{ls}}(y_\theta;t_h^{\mathrm{ls}}(y_\theta))
    \Bigr)\\
    +
    \mu_{i,h}^{\mathrm{ls,disc}},
    \end{multlined}
\end{equation}
and therefore
\begin{equation}
    \begin{multlined}
    \abs{u_\theta(x_i)-\ustar(x_i)}
    \le
    \zeta_{i,h}^{\theta}\\
    +
    \norm{q_{i,h}^{\mathrm{ls}}(y_\theta)}_{X_h}
    \Bigl(
    \norm{S_h(y_\theta)}_{X_h^\ast}
    +
    \mathcal{R}_h^{\mathrm{ls}}(y_\theta;t_h^{\mathrm{ls}}(y_\theta))
    \Bigr)\\
    +
    \mu_{i,h}^{\mathrm{ls,disc}}.
    \end{multlined}
\end{equation}
\end{theorem}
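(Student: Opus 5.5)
The plan is to assemble the certificate from the two preceding propositions by a triangle inequality, after one algebraic rewriting of the corrected center. Write
\[
c_{i,h}^{\mathrm{ls,corr}}-\ustar(x_i)
=
\Bigl[L_{i,h}(y_\theta)-L_{i,h}(y_{h,\mathrm{ls}}^\star)-\bigl(q_{i,h}^{\mathrm{ls}}(y_\theta)\bigr)^\top S_h(y_\theta)\Bigr]
+
\Bigl[L_{i,h}(y_{h,\mathrm{ls}}^\star)-\ustar(x_i)\Bigr].
\]
The first bracket is exactly the quantity controlled by \eqref{eq:local_ls_query_certificate} of \Cref{prop:nonlinear_ls_base_point}, applied at the base point $y=y_\theta$. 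That term is bounded by $\norm{q_{i,h}^{\mathrm{ls}}(y_\theta)}_{X_h}\mathcal{R}_h^{\mathrm{ls}}(y_\theta;t_h^{\mathrm{ls}}(y_\theta))$. The second bracket is bounded by $\mu_{i,h}^{\mathrm{ls,disc}}$ by \Cref{prop:nonlinear_ls_discrete_to_continuous} at $y_h^\pi$. Both propositions must refer to the same stationary target, and the hypothesis that $y_{h,\mathrm{ls}}^\star$ is shared is what allows these two pieces to be added. The interval statement is then just a restatement of the bound with radius $\rho_{i,h}^{\mathrm{ls}}$.

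For the prediction-centered bound, I would write
\[
L_{i,h}(y_\theta)-\ustar(x_i)
=
\bigl(c_{i,h}^{\mathrm{ls,corr}}-\ustar(x_i)\bigr)
+
\bigl(q_{i,h}^{\mathrm{ls}}(y_\theta)\bigr)^\top S_h(y_\theta).
\]
The extra term is a pairing of a state-space vector $q\in\R^{n_h}$ with $S_h(y_\theta)$. Since $S_h(y_\theta)$ is measured in the dual norm $\norm{\cdot}_{X_h^\ast}$, the definition of the dual norm gives $\abs{q^\top s}\le\norm{q}_{X_h}\norm{s}_{X_h^\ast}$. Adding this to the previous bound yields the stated norm-only estimate.

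For the raw-output estimate, I would use
\[
\abs{u_\theta(x_i)-\ustar(x_i)}\le\abs{u_\theta(x_i)-L_{i,h}(y_\theta)}+\abs{L_{i,h}(y_\theta)-\ustar(x_i)}.
\]
Here $L_{i,h}(y_\theta)=\ell_{i,h}^\top\Utheta$, so the first term is exactly $\zeta_{i,h}^{\theta}$ from \eqref{eq:representation_mismatch}.

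The main point needing care is not analytic but bookkeeping. One must verify that the sign conventions in \eqref{eq:local_ls_query_certificate} match the definition \eqref{eq:nonlinear_ls_corrected_center}, namely that the correction is subtracted. One must also confirm that the dual pairing between $q_{i,h}^{\mathrm{ls}}$ and $S_h$ is the one for which the $X_h$/$X_h^\ast$ duality holds. No further estimates beyond the two cited propositions should be needed.
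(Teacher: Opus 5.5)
Your proposal is correct and follows the paper's proof essentially verbatim. You insert $L_{i,h}(y_{h,\mathrm{ls}}^\star)$ and bound the two pieces by \Cref{prop:nonlinear_ls_base_point} at $y_\theta$ and \Cref{prop:nonlinear_ls_discrete_to_continuous} at $y_h^\pi$; then you add $\abs{(q_{i,h}^{\mathrm{ls}}(y_\theta))^\top S_h(y_\theta)}\le\norm{q_{i,h}^{\mathrm{ls}}(y_\theta)}_{X_h}\norm{S_h(y_\theta)}_{X_h^\ast}$ for the norm-only bound, and $\zeta_{i,h}^{\theta}$ via the triangle inequality for the raw-output bound, with the sign and duality checks going through as you expect.
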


A radius \(t_h^{\mathrm{ls}}(y_\theta)\) can be certified by applying a square Newton argument to the stationarity map \(S_h\). If \(K_h(y)\) remains positive definite on the verified ball, the stationary point is a strict local minimizer of \(\Phi_h\) in that ball.

\section{Proofs}
\label{app:proofs}

\subsection{Proofs for \Cref{sec:framework} and \Cref{appsec:main_relocated_material}}
\label{app:framework_proofs}

\subsubsection{Proof of \Cref{prop:finite_collocation_no_point_control}}

\begin{proof}
Choose an open ball \(B\Subset \Omega\setminus S\) centered at \(x_0\). Let \(\varphi\in C_c^\infty(B)\) satisfy \(\varphi(x_0)=1\). Because \(\varphi\) vanishes on a neighborhood of every sample point \(z_j\), all derivatives of \(\varphi\) vanish at each \(z_j\); in particular, every derivative actually used by the sampled loss vanishes there. Therefore the sampled loss takes the same value at \(u\) and \(u+\beta\varphi\) for every \(u\) and \(\beta\in\R\), while the value at \(x_0\) shifts by \(\beta\). Thus deterministic certification of the unsampled point value is impossible from finitely many collocation samples alone.
\end{proof}

\subsubsection{Proof of \Cref{prop:continuous_residual_bound}}

\begin{proof}
Let \(w:=u-\ustar\). Since \(\mathcal{A}\ustar=b\),
\begin{equation}
    \mathcal{A}w=\mathcal{A}u-b=\Fop(u).
\end{equation}
The definition of \(S_x\) gives
\begin{equation}
    \abs{u(x)-\ustar(x)}
    =\abs{\ell_x(w)}
    \le S_x\norm{\mathcal{A}w}_Y
    =S_x\norm{\Fop(u)}_Y.
\end{equation}
\end{proof}

\subsubsection{Proof of \Cref{prop:weight_metric_role}}

\begin{proof}
For any \(y\in\R^{n_h}\),
\[
    y^\top \widehat H_h y
    =
    (\widehat A_h y)^\top W_h(\widehat A_h y)
    =
    \norm{\widehat A_h y}_{W_h}^2.
\]
Because \(W_h\) is symmetric positive definite, \(\norm{\widehat A_h y}_{W_h}=0\) if and only if \(\widehat A_h y=0\). Therefore \(y^\top \widehat H_h y>0\) for every nonzero \(y\) if and only if \(\widehat A_h y\neq 0\) for every nonzero \(y\), which is exactly full column rank of \(\widehat A_h\).
\end{proof}

\subsubsection{Proof of \Cref{prop:minimal_green_representative}}

\begin{proof}
The Lagrangian is
\begin{equation}
    \mathcal{L}(g,\lambda)
    =
    \frac12g^\top W_h^{-1}g
    -
    \lambda^\top(\widehat A_h^\top g-\widehat\ell_h).
\end{equation}
Stationarity in \(g\) gives \(g=W_h\widehat A_h\lambda\). The constraint gives
\begin{equation}
    \widehat A_h^\top W_h\widehat A_h\lambda=\widehat\ell_h,
\end{equation}
so \(\lambda=q_{\ell,h}\) and \(g=g_{\ell,h}\). Strict convexity gives uniqueness.
\end{proof}

\subsection{Proofs for \Cref{sec:linear_main} and \Cref{appsec:linear_rectangular_extension}}
\label{appsec:linear_main_proofs}

\subsubsection{Proof of \Cref{prop:rectangular_exact_identity}}

\begin{proof}
Let \(e_y:=y_\theta-y_h^\star\). By \eqref{eq:rectangular_normal_equations},
\begin{align}
    \widehat H_he_y
    &=
    \widehat A_h^\top W_h
    \bigl(\widehat A_hy_\theta-\widehat A_hy_h^\star\bigr) \\
    &=
    \widehat A_h^\top W_h
    \bigl(\widehat A_hy_\theta-\widehat b_h\bigr)
    =
    \widehat A_h^\top W_hr_h.
\end{align}
Since \(\widehat H_h\) is invertible,
\begin{equation}
    e_y=\widehat H_h^{-1}\widehat A_h^\top W_hr_h.
\end{equation}
Multiplication by \(Z_h\) proves \eqref{eq:rectangular_exact_discrete_identity}.
\end{proof}

\subsubsection{Proof of \Cref{prop:rectangular_adjoint_green_identity}}

\begin{proof}
By \Cref{prop:rectangular_exact_identity},
\begin{align}
    \ell_h^\top(\Utheta-U_h^\star)
    &=
    \widehat\ell_h^\top\widehat H_h^{-1}\widehat A_h^\top W_hr_h \\
    &=
    q_{\ell,h}^\top\widehat A_h^\top W_hr_h
    =
    (W_h\widehat A_hq_{\ell,h})^\top r_h
    =
    g_{\ell,h}^\top r_h.
\end{align}
Furthermore,
\begin{align}
    \norm{g_{\ell,h}}_{W_h^{-1}}^2
    &=
    q_{\ell,h}^\top\widehat A_h^\top W_h\widehat A_hq_{\ell,h} \\
    &=
    q_{\ell,h}^\top\widehat H_hq_{\ell,h}
    =
    q_{\ell,h}^\top\widehat\ell_h
    =
    \widehat\ell_h^\top\widehat H_h^{-1}\widehat\ell_h.
\end{align}
The inequality follows from Cauchy--Schwarz in the dual weighted norms.
\end{proof}

\subsubsection{Proof of \Cref{prop:admissible_exact_identity}}

\begin{proof}
Because \(y_h^\star=\widehat A_h^{-1}\widehat b_h\),
\[
    r_h
    =
    \widehat A_hy_\theta-\widehat b_h
    =
    \widehat A_h(y_\theta-y_h^\star).
\]
Hence
\[
    y_\theta-y_h^\star=\widehat A_h^{-1}r_h.
\]
Multiplication by \(Z_h\) proves \eqref{eq:admissible_exact_discrete_identity}.
\end{proof}

\subsubsection{Proof of \Cref{prop:admissible_adjoint_green_identity}}

\begin{proof}
Using \Cref{prop:admissible_exact_identity},
\[
    \ell_h^\top(\Utheta-U_h^\star)
    =
    \widehat\ell_h^\top\widehat A_h^{-1}r_h
    =
    (\widehat A_h^{-\top}\widehat\ell_h)^\top r_h
    =
    g_{\ell,h}^\top r_h.
\]
Furthermore,
\[
    \norm{g_{\ell,h}}_{W_h^{-1}}^2
    =
    q_{\ell,h}^\top\widehat H_hq_{\ell,h}
    =
    q_{\ell,h}^\top\widehat\ell_h
    =
    \widehat\ell_h^\top\widehat H_h^{-1}\widehat\ell_h.
\]
The inequality follows from Cauchy--Schwarz in the dual weighted norms.
\end{proof}

\subsubsection{Proof of \Cref{prop:admissible_discretization_bias}}

\begin{proof}
Because \(y_h^\star=\widehat A_h^{-1}\widehat b_h\),
\[
    \tau_h^\pi
    =
    \widehat A_h y_h^\pi-\widehat b_h
    =
    \widehat A_h(y_h^\pi-y_h^\star).
\]
Therefore
\[
    \ell_{i,h}^{\top}(U_h^\star-U_h^\pi)
    =
    \widehat\ell_{i,h}^{\top}(y_h^\star-y_h^\pi)
    =
    -\widehat\ell_{i,h}^{\top}\widehat A_h^{-1}\tau_h^\pi
    =
    -g_{i,h}^{\top}\tau_h^\pi,
\]
which proves \eqref{eq:sharp_comparison_bias}. Adding and subtracting \(\ell_{i,h}^\top U_h^\pi\) yields \eqref{eq:exact_comparison_bias_identity}, and the bounds follow from Cauchy--Schwarz together with \eqref{eq:certified_consistency_bound}--\eqref{eq:certified_eval_error}.
\end{proof}

\subsubsection{Proof of \Cref{thm:linear_deterministic_interval}}

\begin{proof}
By \Cref{prop:admissible_adjoint_green_identity},
\begin{equation}
    \ell_{i,h}^{\top}(\Utheta-U_h^\star)=g_{i,h}^{\top}r_h.
\end{equation}
Thus
\begin{equation}
    c_{i,h}^{\mathrm{corr}}=\ell_{i,h}^{\top}U_h^\star.
\end{equation}
By \Cref{prop:admissible_discretization_bias},
\begin{equation}
    \abs{c_{i,h}^{\mathrm{corr}}-\ustar(x_i)}
    \le
    \mu_{i,h}.
\end{equation}
which proves \eqref{eq:corrected_interval}. The representation-centered interval follows from
\begin{equation}
    \abs{\ell_{i,h}^{\top}\Utheta-\ustar(x_i)}
    \le
    \abs{g_{i,h}^{\top}r_h}+\mu_{i,h}.
\end{equation}
The neural-output interval follows from
\begin{equation}
    \abs{\utheta(x_i)-\ustar(x_i)}
    \le
    \abs{\utheta(x_i)-c_{i,h}^{\mathrm{corr}}}
    +
    \abs{c_{i,h}^{\mathrm{corr}}-\ustar(x_i)}.
\end{equation}
\[
    \rho_{i,h}^{\theta}
    =
    \abs{\utheta(x_i)-c_{i,h}^{\mathrm{corr}}}
    +
    \mu_{i,h}
    \le
    \zeta_{i,h}^{\theta}
    +
    \abs{g_{i,h}^{\top}r_h}
    +
    \mu_{i,h}
    =
    \zeta_{i,h}^{\theta}
    +
    \rho_{i,h}^{\mathrm{pred}}.
\]
\end{proof}

\subsubsection{Proof of \Cref{prop:exact_correction_recovers_discrete_target}}

\begin{proof}
Because \(e_\theta\) solves \(\widehat A_h e_\theta=r_h\) and \(r_h=\widehat A_h(y_\theta-y_h^\star)\), invertibility of \(\widehat A_h\) gives
\[
    e_\theta=y_\theta-y_h^\star.
\]
Hence
\[
    \Utheta-Z_h e_\theta
    =
    U_{0,h}+Z_hy_\theta-Z_h(y_\theta-y_h^\star)
    =
    U_{0,h}+Z_hy_h^\star
    =
    U_h^\star.
\]
The identity for \(c_{i,h}^{\mathrm{corr}}\) follows from \eqref{eq:corrected_center}.
\end{proof}

\subsection{Proofs for \Cref{sec:linear_refinements} and \Cref{appsec:main_relocated_material}}
\label{app:linear_refinement_proofs}

\subsubsection{Proof of \Cref{prop:projected_residual_identity}}

\begin{proof}
By \Cref{prop:rectangular_exact_identity},
\[
    e_{y,h}
    =
    \widehat H_h^{-1}\widehat A_h^\top W_h r_h.
\]
Multiplication by \(\widehat A_h\) gives \eqref{eq:projected_residual_exact}. The matrix \(P_h^{\mathrm{eff}}\) is idempotent because
\[
    (P_h^{\mathrm{eff}})^2
    =
    \begin{aligned}[t]
    &\widehat A_h\widehat H_h^{-1}\widehat A_h^\top W_h
    \widehat A_h\widehat H_h^{-1}\widehat A_h^\top W_h\\
    &=
    \widehat A_h\widehat H_h^{-1}\widehat H_h
    \widehat H_h^{-1}\widehat A_h^\top W_h
    \end{aligned}
    =
    P_h^{\mathrm{eff}},
\]
and it is \(W_h\)-self-adjoint because
\[
    (P_h^{\mathrm{eff}})^\top W_h
    =
    W_h\widehat A_h\widehat H_h^{-1}\widehat A_h^\top W_h
    =
    W_h P_h^{\mathrm{eff}}.
\]
Since \(\operatorname{range}(P_h^{\mathrm{eff}})\subseteq \operatorname{range}(\widehat A_h)\) and
\[
    P_h^{\mathrm{eff}}(\widehat A_h y)
    =
    \widehat A_h\widehat H_h^{-1}\widehat A_h^\top W_h\widehat A_h y
    =
    \widehat A_h y
\]
for every \(y\in\R^{n_h}\), it is the \(W_h\)-orthogonal projector onto \(\operatorname{range}(\widehat A_h)\).

Next,
\[
    \norm{e_{y,h}}_{\widehat H_h}^2
    =
    \begin{aligned}[t]
    &e_{y,h}^\top \widehat H_h e_{y,h}
    =
    e_{y,h}^\top \widehat A_h^\top W_h \widehat A_h e_{y,h}\\
    &=
    \norm{\widehat A_h e_{y,h}}_{W_h}^2
    =
    \norm{P_h^{\mathrm{eff}}r_h}_{W_h}^2,
    \end{aligned}
\]
which proves \eqref{eq:natural_error_scale}. Because \(P_h^{\mathrm{eff}}\) is a \(W_h\)-orthogonal projector,
\[
    \norm{P_h^{\mathrm{eff}}r_h}_{W_h}\le \norm{r_h}_{W_h}.
\]

For the pointwise identity,
\[
    \ell_h^\top(\Utheta-U_h^\star)
    =
    \begin{aligned}[t]
    &\widehat\ell_h^\top e_{y,h}
    =
    q_{\ell,h}^\top \widehat H_h e_{y,h}\\
    &=
    q_{\ell,h}^\top \widehat A_h^\top W_h \widehat A_h e_{y,h}
    =
    g_{\ell,h}^\top \widehat A_h e_{y,h}.
    \end{aligned}
\]
Using \eqref{eq:projected_residual_exact} gives
\[
    \ell_h^\top(\Utheta-U_h^\star)=g_{\ell,h}^\top P_h^{\mathrm{eff}}r_h,
\]
and \eqref{eq:pointwise_projected_residual_bound} follows by Cauchy--Schwarz.

Finally,
\[
    \norm{\Utheta-U_h^\star}_2
    =
    \begin{aligned}[t]
    &\norm{Z_h e_{y,h}}_2
    \le
    \norm{Z_h}_2 \norm{e_{y,h}}_2\\
    &\le
    \norm{Z_h}_2 \lambda_{\min}(\widehat H_h)^{-1/2}\norm{e_{y,h}}_{\widehat H_h},
    \end{aligned}
\]
and \eqref{eq:euclidean_coefficient_error_bound} follows from \eqref{eq:natural_error_scale}.
\end{proof}

\subsubsection{Proof of \Cref{prop:coercive_galerkin_transfer}}

\begin{proof}
The reduced Galerkin coefficients satisfy \(\widehat A_h y_h^{\mathrm{FE}}=\widehat b_h\). By the definition of \(y_h^\star\) in the square reduced setting, this implies \(y_h^\star=y_h^{\mathrm{FE}}\), hence \(U_h^\star=U_h^{\mathrm{FE}}\). The identity for \(c_{i,h}^{\mathrm{corr}}\) follows from \eqref{eq:corrected_center}. The interval \eqref{eq:coercive_galerkin_corrected_interval} is immediate.
\end{proof}

\subsubsection{Proof of \Cref{thm:interpolation_transfer_bound}}

\begin{proof}
Let \(y_h^\pi\) and \(y_h^{\mathrm{FE}}\) denote the reduced coefficients of \(u_h^\pi\) and \(u_h^{\mathrm{FE}}\). Since \(\widehat A_h y_h^{\mathrm{FE}}=\widehat b_h\),
\[
    \tau_h^\pi
    =
    \widehat A_h y_h^\pi-\widehat b_h
    =
    \widehat A_h(y_h^\pi-y_h^{\mathrm{FE}}).
\]
Because \(\widehat A_h\) is symmetric positive definite and \(W_h=\widehat A_h^{-1}\),
\[
    \norm{\tau_h^\pi}_{W_h}
    =
    \norm{y_h^\pi-y_h^{\mathrm{FE}}}_{\widehat A_h}
    =
    \norm{u_h^\pi-u_h^{\mathrm{FE}}}_{a}.
\]
The standard Céa-type quasi-optimality estimate and the standard \(P_1\) interpolation estimate cited in \Cref{subsec:coercive_galerkin_specialization} give
\[
    \norm{u_h^\pi-u_h^{\mathrm{FE}}}_{a}
    \le
    C_{\mathrm{int}} h \abs{u^\star}_{H^2(\Omega)}.
\]
The standard local pointwise interpolation estimate gives
\[
    \abs{u_h^\pi(x_i)-u^\star(x_i)}
    \le
    C_{\mathrm{pt}} h^2 \abs{u^\star}_{W^{2,\infty}(\omega_i)}
\]
with equality zero at mesh nodes. Inserting these two estimates into
\Cref{prop:admissible_discretization_bias} gives \eqref{eq:coercive_transfer_split}.
\end{proof}

\subsubsection{Proof of \Cref{cor:coercive_galerkin_certificate_split}}

\begin{proof}
Combine \Cref{thm:linear_deterministic_interval,thm:interpolation_transfer_bound}.
\end{proof}

\subsubsection{Proof of \Cref{thm:1d_fully_computable_transfer}}

\begin{proof}
Set
\[
    e:=u^\star-u_h^{\mathrm{FE}}.
\]
Since \(u_h^{\mathrm{FE}}\) is the conforming Galerkin solution, Galerkin orthogonality gives
\[
    a(e,v_h)=0
    \qquad\text{for all }v_h\in V_h.
\]
Taking \(v_h=\mathcal J_h e\) yields
\[
    \norm{e}_{a}^2
    =
    a\bigl(e,e-\mathcal J_h e\bigr).
\]
Because \(\kappa\) is constant on each element \(K\), because \(u_h^{\mathrm{FE}}\) is affine on each \(K\), and because \(e-\mathcal J_h e\) vanishes at every mesh node, elementwise integration by parts gives
\[
    \norm{e}_{a}^2
    =
    \sum_{K\in\Th}
    \int_K
    \bigl(f-c\,u_h^{\mathrm{FE}}\bigr)
    \bigl(e-\mathcal J_h e\bigr)\,\dd x
    =
    \sum_{K\in\Th}
    \int_K
    R_K
    \bigl(e-\mathcal J_h e\bigr)\,\dd x.
\]
Hence
\[
    \norm{e}_{a}^2
    \le
    \sum_{K\in\Th}
    \norm{R_K}_{L^2(K)}
    \norm{e-\mathcal J_h e}_{L^2(K)}.
\]
Now \(e-\mathcal J_h e\in H_0^1(K)\) on each interval \(K\), so the sharp one-dimensional Poincar\'e inequality gives
\[
    \norm{e-\mathcal J_h e}_{L^2(K)}
    \le
    \frac{h_K}{\pi}
    \norm{(e-\mathcal J_h e)'}_{L^2(K)}.
\]
Also, since \(\mathcal J_h e\) is the affine interpolant of \(e\) on \(K\),
\[
    \int_K
    (e-\mathcal J_h e)'(\mathcal J_h e)'\,\dd x
    =
    (\mathcal J_h e)'
    \bigl[
    e-\mathcal J_h e
    \bigr]_{\partial K}
    =
    0,
\]
and therefore
\[
    \norm{(e-\mathcal J_h e)'}_{L^2(K)}
    \le
    \norm{e'}_{L^2(K)}.
\]
Using \(\kappa|_K=\kappa_K\) then gives
\[
    \norm{e-\mathcal J_h e}_{L^2(K)}
    \le
    \frac{h_K}{\pi\sqrt{\kappa_K}}
    \norm{\kappa^{1/2}e'}_{L^2(K)}.
\]
Substituting into the previous estimate and applying Cauchy--Schwarz over the mesh yields
\[
    \norm{e}_{a}^2
    \le
    \eta_h^{\mathrm{tr}}
    \left(
    \sum_{K\in\Th}
    \norm{\kappa^{1/2}e'}_{L^2(K)}^2
    \right)^{1/2}
    \le
    \eta_h^{\mathrm{tr}}\norm{e}_{a}.
\]
This proves
\[
    \norm{u^\star-u_h^{\mathrm{FE}}}_{a}
    \le
    \eta_h^{\mathrm{tr}}.
\]

For the pointwise estimate, use \(e(a)=e(b)=0\). Then
\[
    e(x_i)
    =
    \int_a^{x_i} e'(x)\,\dd x
    =
    -\int_{x_i}^{b} e'(x)\,\dd x.
\]
Weighted Cauchy--Schwarz therefore gives
\[
    \abs{e(x_i)}
    \le
    \min\left\{
    \left(\int_a^{x_i}\kappa(x)^{-1}\,\dd x\right)^{1/2},
    \left(\int_{x_i}^{b}\kappa(x)^{-1}\,\dd x\right)^{1/2}
    \right\}
    \norm{\kappa^{1/2}e'}_{L^2(a,b)}.
\]
Since \(\norm{\kappa^{1/2}e'}_{L^2(a,b)}\le \norm{e}_{a}\), the bound
\[
    \abs{u^\star(x_i)-u_h^{\mathrm{FE}}(x_i)}
    \le
    C_{i,h}^{\mathrm{pt}}\,\eta_h^{\mathrm{tr}}
\]
follows from the energy estimate already proved.

Finally, \Cref{prop:coercive_galerkin_transfer} gives
\[
    U_h^\star=U_h^{\mathrm{FE}},
    \qquad
    c_{i,h}^{\mathrm{corr}}=u_h^{\mathrm{FE}}(x_i).
\]
Choosing \(U_h^\pi=U_h^{\mathrm{FE}}\) gives \(\tau_h^\pi=0\), hence one may take \(T_h=0\), and \eqref{eq:1d_computable_transfer_terms} follows from the pointwise estimate above. The interval \eqref{eq:1d_computable_transfer_interval} is then immediate from \Cref{thm:linear_deterministic_interval}.
\end{proof}

\subsubsection{Proof of \Cref{cor:interval_shrinkage_conditions}}

\begin{proof}
The identity \(\rho_{i,h}^{\mathrm{corr}}=\sigma_{i,h}T_h+\varepsilon_{i,h}^{\mathrm{eval}}\) is \eqref{eq:corrected_radius_decomposition}, so \eqref{eq:necessary_sufficient_contraction} is immediate. The rate statements follow by substitution. The unprojected-output claim uses \eqref{eq:actual_pinn_output_radius}. The norm-only claim uses \eqref{eq:prediction_centered_radius_chain}.
\end{proof}

\subsubsection{Proof of \Cref{prop:local_tail_projector_certificate}}

\begin{proof}
By \Cref{prop:admissible_adjoint_green_identity},
\[
    \ell_{i,h}^{\top}(\Utheta-U_h^\star)
    =
    g_{i,h}^\top r_h.
\]
Since \(\Pi_{i,R}^2=\Pi_{i,R}\) and \((I-\Pi_{i,R})^2=I-\Pi_{i,R}\),
\[
    \bigl(g_{i,h}^{\mathrm{loc}}(R)\bigr)^\top r_h^{\mathrm{loc}}(R)
    =
    g_{i,h}^\top \Pi_{i,R} r_h,
\]
and similarly
\[
    \bigl(g_{i,h}^{\mathrm{tail}}(R)\bigr)^\top r_h^{\mathrm{tail}}(R)
    =
    g_{i,h}^\top (I-\Pi_{i,R})r_h.
\]
Adding these identities gives \eqref{eq:exact_local_tail_identity}. The mixed bound \eqref{eq:certified_local_green_tail_bound} follows by Cauchy--Schwarz on the tail term, and \eqref{eq:pure_norm_local_green_tail_bound} follows by applying Cauchy--Schwarz to both terms.
\end{proof}

\subsubsection{Proof of \Cref{prop:rectangular_local_tail_projector_certificate}}

\begin{proof}
By \Cref{prop:projected_residual_identity},
\[
    \ell_{i,h}^{\top}(\Utheta-U_h^\star)
    =
    g_{i,h}^\top \bar r_h.
\]
Since \(\Pi_{i,R}^2=\Pi_{i,R}\) and \((I-\Pi_{i,R})^2=I-\Pi_{i,R}\),
\[
    \bigl(g_{i,h}^{\mathrm{loc}}(R)\bigr)^\top \bar r_h^{\mathrm{loc}}(R)
    =
    g_{i,h}^\top \Pi_{i,R}\bar r_h,
\]
and similarly
\[
    \bigl(g_{i,h}^{\mathrm{tail}}(R)\bigr)^\top \bar r_h^{\mathrm{tail}}(R)
    =
    g_{i,h}^\top (I-\Pi_{i,R})\bar r_h.
\]
Adding these identities gives \eqref{eq:rectangular_exact_local_tail_identity}. The mixed bound \eqref{eq:rectangular_certified_local_green_tail_bound} follows by Cauchy--Schwarz on the tail term.
\end{proof}

\subsubsection{Proof of \Cref{prop:inexact_primal_correction}}

\begin{proof}
Let \(e_\theta\) denote the exact correction. Since
\[
    \widehat H_h(e_\theta-\widetilde e_\theta)
    =
    \eta_h^{\mathrm{prim}},
\]
one has
\[
    e_\theta-\widetilde e_\theta
    =
    \widehat H_h^{-1}\eta_h^{\mathrm{prim}}.
\]
Therefore
\[
    \widehat\ell_{i,h}^{\top}(e_\theta-\widetilde e_\theta)
    =
    \widehat\ell_{i,h}^{\top}\widehat H_h^{-1}\eta_h^{\mathrm{prim}}.
\]
Cauchy--Schwarz in the \(\widehat H_h\)-duality pairing yields
\[
    \left|
    \widehat\ell_{i,h}^{\top}(e_\theta-\widetilde e_\theta)
    \right|
    \le
    \begin{aligned}[t]
    &\left(
    \widehat\ell_{i,h}^{\top}\widehat H_h^{-1}\widehat\ell_{i,h}
    \right)^{1/2}\\
    &\times
    \left(
    (\eta_h^{\mathrm{prim}})^\top\widehat H_h^{-1}\eta_h^{\mathrm{prim}}
    \right)^{1/2}
    \end{aligned}
    \le
    \sigma_{i,h}\delta_h^{\mathrm{prim}}.
\]
Since
\[
    U_h^\star=\Utheta-Z_he_\theta,
\]
it follows that
\[
    \widetilde c_{i,h}^{\mathrm{corr,prim}}
    -
    \ell_{i,h}^{\top}U_h^\star
    =
    \widehat\ell_{i,h}^{\top}(e_\theta-\widetilde e_\theta).
\]
This proves \eqref{eq:primal_correction_query_error}. Combining it with
\[
    |\ell_{i,h}^{\top}U_h^\star-\ustar(x_i)|
    \le
    \sigma_{i,h}T_h+\varepsilon_{i,h}^{\mathrm{eval}}
\]
proves the interval.
\end{proof}

\subsubsection{Proof of \Cref{prop:inexact_adjoint_certificate}}

\begin{proof}
The exact adjoint \(q_{i,h}\) satisfies
\[
    \widehat H_hq_{i,h}=\widehat\ell_{i,h}.
\]
Therefore
\[
    \widehat H_h(q_{i,h}-\widetilde q_{i,h})
    =
    s_{i,h}.
\]
Since
\[
    g_{i,h}-\widetilde g_{i,h}
    =
    W_h\widehat A_h(q_{i,h}-\widetilde q_{i,h}),
\]
one has
\[
    \|g_{i,h}-\widetilde g_{i,h}\|_{W_h^{-1}}^2
    =
    (q_{i,h}-\widetilde q_{i,h})^\top
    \widehat A_h^\top W_h\widehat A_h
    (q_{i,h}-\widetilde q_{i,h})
\]
and hence
\[
    \|g_{i,h}-\widetilde g_{i,h}\|_{W_h^{-1}}^2
    =
    (q_{i,h}-\widetilde q_{i,h})^\top
    \widehat H_h
    (q_{i,h}-\widetilde q_{i,h})
    =
    s_{i,h}^\top\widehat H_h^{-1}s_{i,h}
    \le
    (\delta_{i,h}^{\mathrm{adj}})^2.
\]
The triangle inequality gives
\[
    \sigma_{i,h}
    =
    \|g_{i,h}\|_{W_h^{-1}}
    \le
    \|\widetilde g_{i,h}\|_{W_h^{-1}}
    +
    \|g_{i,h}-\widetilde g_{i,h}\|_{W_h^{-1}}
    \le
    \widetilde\sigma_{i,h}^{+}.
\]

Let
\[
    c_{i,h}^{\mathrm{corr}}
    =
    \ell_{i,h}^{\top}\Utheta-g_{i,h}^{\top}r_h
\]
be the exact corrected center. Then
\[
    \left|
    \widetilde c_{i,h}^{\mathrm{corr}}
    -
    c_{i,h}^{\mathrm{corr}}
    \right|
    =
    |(g_{i,h}-\widetilde g_{i,h})^\top r_h|
    \le
    \delta_{i,h}^{\mathrm{adj}}\|r_h\|_{W_h}.
\]
Also,
\[
    |c_{i,h}^{\mathrm{corr}}-\ustar(x_i)|
    \le
    \sigma_{i,h}T_h+\varepsilon_{i,h}^{\mathrm{eval}}
    \le
    \widetilde\sigma_{i,h}^{+}T_h+\varepsilon_{i,h}^{\mathrm{eval}}.
\]
Combining these two inequalities proves the interval.
\end{proof}

\subsubsection{Proof of \Cref{prop:newton_kantorovich_verified_ball}}

\begin{proof}
Apply the Newton--Kantorovich theorem to \(F_h\) at \(y_\theta\), with inverse \(B_\theta=J_h(y_\theta)^{-1}\), residual size \(\eta_\theta=\|B_\theta F_h(y_\theta)\|_{X_h}\), and Lipschitz bound \(L_\theta^B\) on the verified ball. The radius \(t_\theta^{\mathrm{root}}\) is the smaller nonnegative solution of
\[
    \eta_\theta+\frac{L_\theta^B}{2}t^2=t.
\]
The condition \(t_\theta^{\mathrm{root}}\le R\) ensures that the Newton--Kantorovich ball lies inside the domain where the Lipschitz bound is valid. The theorem gives existence and uniqueness of a zero in \(B_{X_h}(y_\theta,t_\theta^{\mathrm{root}})\), and hence the stated radius bound.
\end{proof}

\subsection{Proofs for \Cref{sec:nonlinear}}
\label{app:nonlinear_proofs}

\subsubsection{Proof of \Cref{prop:nonlinear_root_base_point}}

\begin{proof}
Set \(d:=y-y_h^\star\). Since \(F_h(y_h^\star)=0\),
\begin{equation}
    0
    =
    F_h(y-d)
    =
    F_h(y)-J_h(y)d+R_h^{\mathrm{root}}(y;d).
\end{equation}
Hence
\begin{equation}
    J_h(y)d
    =
    F_h(y)+R_h^{\mathrm{root}}(y;d).
\end{equation}
Using \(H_h^{\mathrm{root}}(y)q_{\ell,h}^{\mathrm{root}}(y)=\widehat\ell_h\),
\begin{align}
    \widehat\ell_h^\top d
    &=
    \bigl(q_{\ell,h}^{\mathrm{root}}(y)\bigr)^\top
    J_h(y)^\top W_h J_h(y)d \\
    &=
    \bigl(g_{\ell,h}^{\mathrm{root}}(y)\bigr)^\top
    \bigl(F_h(y)+R_h^{\mathrm{root}}(y;d)\bigr).
\end{align}
Therefore
\begin{equation}
    \widehat\ell_h^\top d
    -
    \bigl(g_{\ell,h}^{\mathrm{root}}(y)\bigr)^\top F_h(y)
    =
    \bigl(g_{\ell,h}^{\mathrm{root}}(y)\bigr)^\top R_h^{\mathrm{root}}(y;d),
\end{equation}
and Cauchy--Schwarz in the \(W_h\)-duality pairing gives \eqref{eq:nonlinear_root_base_point_certificate}. Since
\begin{equation}
    L_{i,h}(y)-L_{i,h}(y_h^\star)=\widehat\ell_{i,h}^\top (y-y_h^\star),
\end{equation}
\eqref{eq:local_root_query_certificate} follows.
\end{proof}

\subsubsection{Proof of \Cref{prop:nonlinear_root_discrete_to_continuous}}

\begin{proof}
Applying \Cref{prop:nonlinear_root_base_point} at the base point \(y=y_h^\pi\) gives
\begin{equation}
    \abs{
    L_{i,h}(y_h^\pi)-L_{i,h}(y_h^\star)
    -
    \bigl(g_{i,h}^{\mathrm{root}}(y_h^\pi)\bigr)^\top \tau_h^\pi
    }
    \le
    \sigma_{i,h}^{\mathrm{root}}(y_h^\pi)\,
    \mathcal{R}_h^{\mathrm{root}}(y_h^\pi;t_h^{\mathrm{root}}(y_h^\pi)).
\end{equation}
Hence
\begin{align}
    \abs{L_{i,h}(y_h^\star)-\ustar(x_i)}
    &\le
    \abs{L_{i,h}(y_h^\star)-L_{i,h}(y_h^\pi)}
    +
    \abs{L_{i,h}(y_h^\pi)-\ustar(x_i)} \\
    &\le
    \abs{
    \bigl(g_{i,h}^{\mathrm{root}}(y_h^\pi)\bigr)^\top \tau_h^\pi
    } \\
    &\quad
    +
    \sigma_{i,h}^{\mathrm{root}}(y_h^\pi)\,
    \mathcal{R}_h^{\mathrm{root}}(y_h^\pi;t_h^{\mathrm{root}}(y_h^\pi)) \\
    &\quad
    +
    \varepsilon_{i,h}^{\mathrm{eval}}.
\end{align}
The norm-only variant follows from Cauchy--Schwarz.
\end{proof}

\subsubsection{Proof of \Cref{thm:nonlinear_root_interval}}

\begin{proof}
Applying \Cref{prop:nonlinear_root_base_point} at \(y=y_\theta\) gives
\begin{equation}
    \abs{
    c_{i,h}^{\mathrm{root,corr}}
    -
    L_{i,h}(y_h^\star)
    }
    \le
    \sigma_{i,h}^{\mathrm{root}}(y_\theta)\,
    \mathcal{R}_h^{\mathrm{root}}(y_\theta;t_h^{\mathrm{root}}(y_\theta)).
\end{equation}
Combining this with \Cref{prop:nonlinear_root_discrete_to_continuous} proves \eqref{eq:nonlinear_root_interval_bound}. The norm-only bound \eqref{eq:nonlinear_root_norm_only} follows from
\begin{equation}
    \abs{
    \bigl(g_{i,h}^{\mathrm{root}}(y_\theta)\bigr)^\top r_h
    }
    \le
    \sigma_{i,h}^{\mathrm{root}}(y_\theta)\norm{r_h}_{W_h},
\end{equation}
and the unprojected-output bound follows from
\begin{equation}
    \abs{u_\theta(x_i)-\ustar(x_i)}
    \le
    \zeta_{i,h}^{\theta}
    +
    \abs{L_{i,h}(y_\theta)-\ustar(x_i)}.
\end{equation}
\end{proof}

\subsection{Proofs for \Cref{appsec:nonlinear_stationary_variant}}
\label{app:nonlinear_stationary_proofs}

\subsubsection{Proof of \Cref{prop:nonlinear_ls_base_point}}

\begin{proof}
Set \(d:=y-y_{h,\mathrm{ls}}^\star\). Since \(S_h(y_{h,\mathrm{ls}}^\star)=0\),
\begin{equation}
    0
    =
    S_h(y-d)
    =
    S_h(y)-K_h(y)d+R_h^{\mathrm{ls}}(y;d).
\end{equation}
Thus
\begin{equation}
    K_h(y)d
    =
    S_h(y)+R_h^{\mathrm{ls}}(y;d).
\end{equation}
Using \(K_h(y)^\top q_{\ell,h}^{\mathrm{ls}}(y)=\widehat\ell_h\),
\begin{align}
    \widehat\ell_h^\top d
    &=
    \bigl(q_{\ell,h}^{\mathrm{ls}}(y)\bigr)^\top K_h(y)d \\
    &=
    \bigl(q_{\ell,h}^{\mathrm{ls}}(y)\bigr)^\top S_h(y)
    +
    \bigl(q_{\ell,h}^{\mathrm{ls}}(y)\bigr)^\top R_h^{\mathrm{ls}}(y;d).
\end{align}
Duality between \(X_h\) and \(X_h^\ast\) proves \eqref{eq:nonlinear_ls_base_point_certificate}. The query-point version follows from
\begin{equation}
    L_{i,h}(y)-L_{i,h}(y_{h,\mathrm{ls}}^\star)=\widehat\ell_{i,h}^\top(y-y_{h,\mathrm{ls}}^\star).
\end{equation}
\end{proof}

\subsubsection{Proof of \Cref{prop:nonlinear_ls_discrete_to_continuous}}

\begin{proof}
Applying \Cref{prop:nonlinear_ls_base_point} at the base point \(y=y_h^\pi\) gives
\begin{equation}
    \abs{
    L_{i,h}(y_h^\pi)-L_{i,h}(y_{h,\mathrm{ls}}^\star)
    -
    \bigl(q_{i,h}^{\mathrm{ls}}(y_h^\pi)\bigr)^\top s_h^\pi
    }
    \le
    \norm{q_{i,h}^{\mathrm{ls}}(y_h^\pi)}_{X_h}\,
    \mathcal{R}_h^{\mathrm{ls}}(y_h^\pi;t_h^{\mathrm{ls}}(y_h^\pi)).
\end{equation}
Combining this with
\begin{equation}
    \abs{L_{i,h}(y_h^\pi)-\ustar(x_i)}
    \le
    \varepsilon_{i,h}^{\mathrm{eval}}
\end{equation}
proves the claim.
\end{proof}

\subsubsection{Proof of \Cref{thm:nonlinear_ls_interval}}

\begin{proof}
Applying \Cref{prop:nonlinear_ls_base_point} at \(y=y_\theta\) gives
\begin{equation}
    \abs{
    c_{i,h}^{\mathrm{ls,corr}}
    -
    L_{i,h}(y_{h,\mathrm{ls}}^\star)
    }
    \le
    \norm{q_{i,h}^{\mathrm{ls}}(y_\theta)}_{X_h}\,
    \mathcal{R}_h^{\mathrm{ls}}(y_\theta;t_h^{\mathrm{ls}}(y_\theta)).
\end{equation}
Combining this with \Cref{prop:nonlinear_ls_discrete_to_continuous} proves the interval bound. The norm-only estimate follows from
\begin{equation}
    \abs{
    \bigl(q_{i,h}^{\mathrm{ls}}(y_\theta)\bigr)^\top S_h(y_\theta)
    }
    \le
    \norm{q_{i,h}^{\mathrm{ls}}(y_\theta)}_{X_h}
    \norm{S_h(y_\theta)}_{X_h^\ast},
\end{equation}
and the unprojected-output bound follows from
\begin{equation}
    \abs{u_\theta(x_i)-\ustar(x_i)}
    \le
    \zeta_{i,h}^{\theta}
    +
    \abs{L_{i,h}(y_\theta)-\ustar(x_i)}.
\end{equation}
\end{proof}

\end{document}